\colorlet{darkblue}{blue!50!black}
\newcommand{\p}{\partial}
\newcommand{\e}{\varepsilon}
\newcommand{\R}{{\mathbb R}}
\newcommand{\IP}{{\mathbb P}}
\newcommand{\Z}{{\mathbb Z}}
\newcommand{\E}{{\mathbb E}}
\newcommand{\T}{{\mathbb T}}
\newcommand{\GGG}{\boldsymbol{\mathit G}}
\newcommand{\et}{\boldsymbol{\mathit e_3}}
\newcommand{\bT}{\bar{T}}
\newcommand{\BB}{{\mathcal B}}
\newcommand{\DD}{{\mathcal D}}
\newcommand{\EE}{{\mathcal E}}
\newcommand{\FF}{{\mathcal F}}
\newcommand{\HH}{{\mathcal H}}
\newcommand{\KK}{{\mathcal K}}
\newcommand{\LL}{{\mathcal L}}
\newcommand{\PP}{{\mathcal P}}
\newcommand{\RR}{{\mathcal R}}
\newcommand{\sS}{{\mathcal S}}
\newcommand{\XX}{{\mathcal X}}
\newcommand{\ZZ}{{\mathcal Z}}
\newcommand{\UU}{{\mathcal U}}
\newcommand{\dd}{{\textup d}}
\newcommand{\mmmm}{{\mathfrak m}}
\newcommand{\PPPP}{{\mathfrak P}}
\newcommand{\fff}{{\boldsymbol{\mathit f}}}
\newcommand{\uuu}{{\boldsymbol{\mathit u}}}
\newcommand{\vvv}{{\boldsymbol{\mathit v}}}
\newcommand{\www}{{\boldsymbol{\mathit w}}}
\newcommand{\twww}{{\boldsymbol{\widetilde{\mathit w}}}}
\newcommand{\yyy}{{\boldsymbol{\mathit y}}}
\newcommand{\zzz}{{\boldsymbol{\mathit z}}}
\newcommand{\Ra}{\mathrm{Ra}}
\newcommand{\Prn}{\mathrm{Pr}}
\newcommand{\supp}{\mathop{\rm supp}\nolimits}
\newcommand{\diver}{\mathop{\rm div}\nolimits}
\newcommand{\lspan}{\mathop{\rm span}}
\theoremstyle{plain}
\newtheorem*{mt}{Main Theorem}
\newtheorem{theorem}{Theorem}[section]
\newtheorem{lemma}[theorem]{Lemma}
\newtheorem{proposition}[theorem]{Proposition}
\newtheorem{corollary}[theorem]{Corollary}
\theoremstyle{definition}
\theoremstyle{remark}
\newtheorem{remark}[theorem]{Remark}
\newtheorem*{example*}{Example}
\numberwithin{equation}{section}
\begin{document}
\author{Juraj F\"oldes\footnote{University of Virginia, Department of Mathematics, Charlottesville, VA, 22904, USA; e-mail: \href{mailto:foldes@virginia.edu}{Foldes@virginia.edu}} \and  Armen Shirikyan\footnote{Department of Mathematics, CY Cergy Paris University, CNRS UMR 8088, 2 avenue Adolphe Chauvin, 95302 Cergy--Pontoise, France \& Peoples Friendship University of Russia (RUDN University); e-mail: \href{mailto:Armen.Shirikyan@cyu.fr}{Armen.Shirikyan@cyu.fr}}} 
\title{Rayleigh--B\'enard convection with stochastic  forcing localised near the bottom}
\date{\today}
\maketitle

\begin{abstract}
We prove stochastic stability of the three-dimensional  Rayleigh--B\'enard convection in the infinite Prandtl number regime for any pair of temperatures maintained on the top and the bottom. Assuming that the non-degenerate random perturbation acts in a thin layer adjacent to the bottom of the domain, we prove that the random flow periodic in the two infinite directions stabilises to a unique stationary measure, provided that there is at least one point accessible from any initial state. We also prove that the latter property is satisfied if the amplitude of the noise is sufficiently large. 

\medskip
\noindent
{\bf AMS subject classifications:} 35Q35, 76E06, 76M35, 93B07

\smallskip
\noindent
{\bf Keywords:} Boussinesq equation, Rayleigh--B\'enard convection, random forcing, exponential mixing, unique continuation
\end{abstract}

\tableofcontents

\section{Introduction}
\label{s0}
Buoyancy driven flows can be observed in various natural settings, ranging from the boiling water in a pot, through the motion of  oceans and atmosphere, to large scale flows inside of planets or stars. Convection is induced by gravity and the fact that colder fluid has higher density than the warmer one. Thus, if there is a  heat source on the bottom (in the direction of gravity vector), then the denser layer lies on the top of the lighter one  which induces an instability of the trivial state (no motion of the fluid) and leads  to convection. The standard model that already contains the essence of the general situation is given by Rayleigh--B\'enard convection (see \cite{benard1901, rayleigh-1916}): the fluid is enclosed between two parallel plates with the normal vector parallel to the gravity force. The fluid is heated from below (and possibly cooled from above) and for simplicity one often assumes periodic boundary conditions in the horizontal direction. 

Mathematically,   Rayleigh--B\'enard convection is modeled by Boussinesq equations \cite{boussinesq1897} which describe the evolution of the velocity~$\uuu$ and  temperature~$T$. After appropriate rescaling (see e.g.~\cite{FGRW-2016}), the Boussinesq equations take the form
\begin{align}
	\frac{1}{\Prn}(\uuu_t + \langle\uuu, \nabla\rangle \uuu) - \Delta \uuu +\nabla p&=(\Ra\,T)\et, \quad \diver \uuu=0 \,, 
	\label{full-vel}\\
	\p_t T- \Delta T+\langle \uuu,\nabla \rangle T&=\eta(t,x)\,,  \label{full-temp} 
\end{align}
where the non-dimensional parameters $\Ra$ and $\Prn$ are respectively Rayleigh and Prandtl numbers, $\et$ is the vertical basis vector, $T$, $\uuu$, and~$p$ are unknown temperature, velocity field, and pressure of the fluid, and~$\eta$ is an external heating source. System~\eqref{full-vel}, \eqref{full-temp} is considered in the domain
\begin{equation} \label{domain}
D=\{x=(x_1,x_2,x_3)\in\R^3: 0<x_3<1\},
\end{equation}
with the $2\pi$-periodicity condition with respect to the variables~$x_1$ and~$x_2$. The model is known to manifest strong instability and chaoticity phenomena (e.g., see~\cite{doering-atal-2006,mielke-1997}). On the other hand, it is widely believed that random external forces may contribute to the large-time stabilisation of the flow and to its convergence to a stationary steady state. This question was investigated in the papers \cite{FGRT-2015, FGRW-2016, FFGR-2017, FGR-2019}. In particular, it was proved that if the noise is non-degenerate in the sense that it acts on sufficiently many eignefunctions of the Laplacian, then the Markov process associated with~\eqref{full-vel}, \eqref{full-temp} has a unique stationary measure, and any other solution converges to it in distribution as $t\to\infty$. 

In many situations when the fluid is very viscous or has small thermal diffusivity (for example, earth mantle, high pressure gasses, or engine oil), the Prandtl number $\Prn$ can reach the order of~$10^{24}$, see \cite{CD-1999, DC-2001, OS-2011}. Then, it is usual to formally set $\Prn = \infty$ and approximate \eqref{full-vel}, \eqref{full-temp} by the coupled elliptic--parabolic system
\begin{align}
	\p_t T- \Delta T+\langle \uuu,\nabla \rangle T&=\eta(t,x),\label{temperature}\\
	-\Delta \uuu +\nabla p&=(\Ra\,T)\et, \quad \diver \uuu=0. \label{velocity-field}
\end{align}
Mathematically, this is an active scalar equation for the temperature~$T$, and the velocity~$\uuu (t)$ is completely determined by $T(t)$ for any time $t > 0$; in particular, there is no initial condition for $\uuu$ or an independent evolution equation. Note that the advection term $\langle\uuu, \nabla\rangle \uuu$ is not present in \eqref{velocity-field}, which allows us to treat the three-dimensional case, whereas for the full Boussinesq system the well-posedness is unknown even if $\eta \equiv 0$. Let us mention that the infinite Prandtl system \eqref{temperature}, \eqref{velocity-field}  can have very complicated dynamics (even without forcing) if the Rayleigh number~$\Ra$ is sufficiently large; see \cite{BH-2009, CD-1999, BPA-2000, DC-2001, wang-2004, park-2006, AGL-2009, LX-2010, OS-2011}. 

For viscous fluids, the no-slip condition $\uuu =0$ on the boundary is the most natural requirement for the velocity, although other boundary conditions have been considered in the literature. To  create the temperature gradient that generates buoyancy force and induces non-trivial dynamics, we assume the non-homogeneous Dirichlet boundary conditions for the temperature. We thus impose the conditions
\begin{equation} \label{boundary-condition}
	T\bigr|_{x_3=0}=T_b, \quad
	T\bigr|_{x_3=1}=T_u, \quad 
	\uuu\bigr|_{x_3=0}=\uuu\bigr|_{x_3=1}=0,
\end{equation}
where $T_b > T_u$ are some numbers. In more complicated situations, the temperature on the lower (and also upper) boundary is not constant, and the fluctuations are often modeled by random processes. In addition, the fluctuations might penetrate inside of the domain and act in a small layer near the boundary and as such the fluctuations are spatially correlated. For concrete examples arising in physics, we refer to models of  absorption of sunlight in frozen lakes~\cite{Farmer1975, bengtsson1996,toppaladoddi2018,ulloa2018}, interior of stars~\cite{kippenhahn2012,barker2014}, radioactive decay in Earth's mantle~\cite{davaille2002}, flux of neutrinos in collapsing stellar cores~\cite{janka1996}, and laboratory experiments~\cite{bouillaut2019}. This motivates our study of the stochastic forcing having a support in a boundary layer, and space-time correlation allows us to assume bounded forcing (or fluctuations). 
  
We thus consider the problem of stochastic stabilisation of the system~\eqref{temperature}, \eqref{velocity-field} in the strip~\eqref{domain}, imposing the $2\pi$-periodicity condition in the horizontal directions~$x_1$ and~$x_2$, and Dirichlet's condition~\eqref{boundary-condition} on the top and the bottom. To single out a unique solution of the problem, we specify an initial condition for~$T$:
\begin{equation} \label{IC-temperature}
	T(0,x)=T_0(x). 
\end{equation}
As for the external heating source, we assume that it is concentrated near the bottom and has the form
\begin{equation} \label{eta-form}
	\eta(t,x)=\sum_{k=1}^\infty I_{[k-1,k)}(t)\eta_k(t-k+1,x),
\end{equation}
where $I_{[k-1,k)}$ is the indicator function of the interval $[k-1,k)$, and~$\{\eta_k\}_{k\ge1}$ is a sequence of i.i.d.\ random variables in~$L^2([0,1]\times D)$. Under suitable regularity hypotheses on~$\eta$ (see \hyperlink{(D)}{{\rm(D)}} in Section \ref{s-MR} below), problem~\eqref{temperature}--\eqref{IC-temperature} has a unique solution~$(T,\uuu)$ in an appropriate functional class. Moreover, since the velocity field~$\uuu$ is the solution of the stationary Stokes system with Dirichlet's boundary condition, its evolution is uniquely determined by that of the temperature, so that we confine ourselves to the study of the large-time asymptotics of~$T$. 
 
The evolution of~$T$ takes place in the affine subspace~$\HH$ of the functions that belong to the Sobolev class~$H^1(D)$ and satisfy the boundary conditions in~\eqref{boundary-condition}. The independence of the random variables~$\eta_k$ entering~\eqref{eta-form} implies that the restriction of $T(t)$ to integer times form a homogeneous Markov process in~$\HH$, and our goal is to prove the existence, uniqueness, and exponential stability of the related stationary distribution. To this end, we impose the following four hypotheses\footnote{We give here a somewhat informal formulation of the hypotheses, referring the reader to Section~\ref{s-MR} and Condition~\hyperlink{(D)}{(D)} for more details.} on the law~$\ell$ of random variables~$\eta_k$:
\begin{description}
	\item [\underline{\rm Regularity}]
\sl The measure~$\ell$ is concentrated on the vector space~$\EE$ of functions $\zeta\in L^2([0,1],(H_0^1\cap H^2)(D))$ such that $\p_t \zeta \in L^2([0,1]\times D)$.
	\item [\underline{\rm Localisation}]
The support~$\KK\subset\EE$ of~$\ell$ is compact in the topology of~$\EE$, and for any $\zeta \in \KK$ and at any time~$t\in[0,1]$, $\zeta(t)$ is supported in the closure of a thin strip $D_c:=\{x\in D:0<x_3<c\}$.
\end{description}
In what follows, we denote by~$E$ the space of functions $\zeta\in\EE$ supported in the closure of~$(0,1)\times D_c$. Thus, by our assumptions, $\KK$ is a compact subset of~$E$. 

\begin{description}
	\item [\underline{\rm Non-degeneracy}]
\sl The measure~$\ell$ is non-degenerate in the following sense.  There is an orthonormal basis~$\{\varphi_j\}_{j\ge1}$ of~$E$ such that, for each~$j$, the projection~$\ell_j$ of~$\ell$ to the one-dimensional subspace spanned by~$\varphi_j$ possesses smooth density (with respect to Lebesgue measure) with the origin in its support.  Moreover, the measure~$\ell$ is the direct product of its projections~$\ell_j$, $j\ge1$.
	\item [\underline{\rm Amplitude}]
The support~$\KK$ contains a sufficiently large Hilbert cube\,\footnote{Recall that a Hilbert cube in~$E$ is a subset of the form $\{\zeta\in E:|(\zeta,e_j)|\le b_j\mbox{ for }j\ge1\}$, where $\{e_j\}$ is an orthonormal basis in~$E$, and $b_j>0$ are some numbers such that $\sum b_j^2<\infty$.} in~$E$. 
\end{description}

The following theorem is a simplified version of the main result of this paper. Its precise formulation can be found in Section~\ref{s-MR}.

\begin{mt}
	The discrete-time Markov process in~$\HH$ associated with problem~\eqref{temperature}, \eqref{velocity-field} has a unique stationary measure~$\mu$, which is concentrated on the Sobolev class~$H^2(D)$. Moreover, for any continuous observable $f:H^1(D)\to\R$ and any initial state $T_0\in\HH$, the ensemble average $\E f(T(k))$ converges  exponentially as $k \to \infty$ to the mean value $\langle f,\mu\rangle$. 
\end{mt}

The proof of this result is based on an abstract criterion for exponential mixing in infinite-dimensional spaces and a {\it new unique continuation property\/} for the linearisation of system~\eqref{temperature}, \eqref{velocity-field}. We refer the reader to Section~\ref{s-MR} for the general scheme used to establish the Main Theorem and to Sections~\ref{s-ExpoMixing} and~\ref{s-uniquecontinuation} for a detailed proof. 
 
Let us mention that the problem of mixing for Markov processes associated with PDEs subject to random perturbations has been the focus of attention of many researchers in the last thirty years. We refer the reader to the papers~\cite{FM-1995,KS-cmp2000,EMS-2001,BKL-2002} for the first results in this direction and to the review papers~\cite{ES-2000,bricmont-2002,flandoli-2008,debussche-2013} and the book~\cite{KS-book} for a detailed description of numerous results obtained in the case of a non-degenerate noise. The case when the random perturbation does not act directly on the determining modes of the dynamics is more complicated, and there are only a few results in that setting. In particular, Hairer and Mattingly~\cite{HM-2006,HM-2011} proved the exponential mixing for the 2D Navier--Stokes system on the torus and sphere, assuming that the external random force is white in time and acts on a few Fourier modes. The paper~\cite{FGRT-2015} established a similar result in the case of the Boussinesq system~\eqref{full-vel}, \eqref{full-temp}. A class of PDEs with a different type of noise acting on finitely many Fourier modes was studied in~\cite{KNS-gafa2020,KNS-jep2020}. The Navier--Stokes system with a noise localised in the physical space was investigated in~\cite{shirikyan-asens2015,shirikyan-jems2021}. Finally,  scalar conservation laws and order-preserving PDEs with random forcing were studied by Debussche--Vovelle~\cite{DV-2015} and Butkovsky--Scheutzow~\cite{BS-2020}. The present paper is close in spirit to~\cite{shirikyan-asens2015,KNS-gafa2020} and uses a mixing criterion established there. However, its application is not straightforward, and new ideas invoking the technique of Carleman estimates and unique continuation property are necessary; see the end of Section~\ref{s-MR} for further details and Section~\ref{s-carleman} for a full presentation.  

\smallskip
The paper is organised as follows. Section~\ref{s-EUR} establishes some preliminary results on the initial-boundary value problem for the Boussinesq system with infinite Prandtl number, the existence of a compact absorbing set, and the regularity of the resolving operator. The main result of the paper and the scheme of its proof are presented in Section~\ref{s-MR}. The details of the proof are given in Sections~\ref{s-ExpoMixing} and~\ref{s-uniquecontinuation}. In particular, Section~\ref{s-carleman} contains the main new technical tool, dealing with the unique continuation property for the linearised Boussinesq system. The appendix gathers some auxiliary results used in the main text.

\subsubsection*{Acknowledgments}
JF is partly supported by the National Science Foundation under the grant NSF-DMS-1816408. The research of AS was supported by the \textit{CY Initiative of Excellence\/} through the grant {\it Investissements d'Avenir\/} ANR-16-IDEX-0008 and by the Ministry of Science and Higher Education of the Russian Federation (megagrant agreement No.~075-15-2022-1115).

\subsubsection*{Notation and conventions}
We use  various spaces of functions defined in the strip
\begin{equation} \label{domain-D}
D=\{x=(x_1,x_2,x_3)\in\R^3: x_1,x_2\in\R/2\pi\Z,\,0<x_3<1\},	
\end{equation}
and in particular all functions are assumed to be $2\pi$-periodic in~$x_1$ and in~$x_2$.  We denote by~$L^p(D)$ and~$H^k(D)$ the Lebesgue and Sobolev spaces, respectively, and by~$H_0^1(D)$ the space of functions in~$H^1(D)$ with zero trace on the boundary of~$D$. We often write~$H^k$ and~$H^1_0$ instead of $H^k(D)$ and $H_0^1(D)$ respectively. We denote by $\langle\cdot,\cdot\rangle$ the $L^2$ inner product and by~$\|\cdot\|$ the corresponding norm.  

\smallskip
\noindent
Given an interval $J\subset\R$ and a separable Banach space~$X$ (or a Polish space~$X$ with a metric~$\dd_X$), we shall use the following classes of functions:
\begin{itemize}
\item
$L^p(J,X)$,  $1 \leq p < \infty$ is the space of Borel-measurable functions $f:J\to X$ such that
$$
\|f\|_{L^p(J,X)}:=\biggr(\int_J\|f(t)\|_X^p\dd t\biggr)^{1/p}<\infty,
$$
with a usual modification by supremum in the case $p=\infty$. 

\item
$H^k(J,X)$ is the space of the functions $f\in L^2(J,X)$ such that $\p_t^jf\in L^2(J,X)$ for any $1\le j\le k$.

\item
$C_b(X)$ is the space of bounded continuous functions $f:X\to\R$ endowed with the natural norm
$$
\|f\|_\infty=\sup_{u\in X}|f(u)|.
$$

\item
$L_b(X)$ is the space of functions $f\in C_b(X)$ that are Lipschitz continuous on~$X$; it is endowed with the norm
$$
\|f\|_L=\|f\|_\infty+\sup_{0\le \dd_X(u,v)\le 1}\frac{|f(u)-f(v)|}{\dd_X(u,v)}. 
$$

\item
We write~$\PP(X)$ for the set of  Borel probability measures on~$X$.  The set~$\PP(X)$ is considered with the topology of weak$^*$ convergence,  which is metrizable by the dual-Lipschitz distance:
$$
\|\mu_1-\mu_2\|_L^*
=\sup_{\|f\|_L\le 1} |\langle f,\mu_1\rangle-\langle f,\mu_2\rangle|,
$$
where the supremum is taken over all functions $f\in L_b(X)$ with norm not exceeding~$1$. 

\item 
Given a finite interval $J_\tau=[0,\tau]$, we define the underlying spaces for solutions
\begin{align*}
\XX_\tau&=L^2(J_\tau,H^2)\cap H^1(J_\tau,L^2),\\
\XX_\tau^0&=\{v\in\XX_\tau:v=0\text{ on }\p D\}, \\
	\UU_\tau&=L^2(J_\tau,H_0^1\cap H^4)\cap H^1(J_\tau,H_0^1\cap H^2),	
\end{align*}
where Sobolev's spaces in the definition of~$\XX_\tau$ and~$\UU_\tau$ denote respectively spaces of scalar and $\R^3$-valued functions.  If $\tau=1$, we  omit the subscript from the notation and write respectively $J$, $\XX$, $\XX^0$,  and $\UU$ instead of $J_1$, $\XX_1$, $\XX_1^0$,  and $\UU_1$.

\item Since $T$ attains non-trivial boundary conditions \eqref{boundary-condition}, given numbers $T_b,T_u\in\R$,  we denote by~$\HH\subset H^1(D)$ the affine subspace of functions satisfying the first two relations in~\eqref{boundary-condition}. 

\item Recall that the forcing acts only on a subdomain of $D$.  Hence,  given $c\in(0,1)$,  let $D_c=\{(x_1,x_2,x_3)\in D:0<x_3<c\}$.  We write $H_c^k(D)$ for the the space of functions belonging to $H^k(D)\cap H_0^1(D)$ that are supported in the closure of~$D_c$ and denote by~$E$ the space of functions $\zeta\in L^2(J,H_c^2(D))$ such that $\p_t \zeta\in L^2(J\times D)$.

\item
We denote by~$C$ unessential positive numbers that may vary from line to line. 
\end{itemize}

\section{Initial-boundary value problem and Markov process}
\label{s-EUR}
In this section, we discuss the existence, uniqueness, and regularity of solutions for system~\eqref{temperature}, \eqref{velocity-field} and describe an associated discrete-time Markov process. All the results of this section are rather standard, and therefore we do not present detailed proofs. 

\subsection{Existence and uniqueness of a solution}
\label{s-existence-uniqueness}
Let us denote by $M:L^2(D)\to (H_0^1\cap H^2)(D,\R^3)$ the bounded linear operator mapping a function~$T$ to the solution~$\uuu$ of the elliptic system~\eqref{velocity-field} with Dirichlet's boundary conditions (see~\eqref{boundary-condition}).  Note that $M$ is well defined due to the maximal regularity of stationary Stokes operator.  In addition,  $M$ is a bounded operator from $H^k(D)$ to $(H^1_0 \cap H^{k+2}) (D)$ for any $k \geq 0$ and we refer to this fact below as regularizing property of $M$.

Substituting $\uuu=M(T)$ into~\eqref{temperature}, we obtain the following non-local PDE:
\begin{equation} \label{boussinesq-reduced}
	\p_tT- \Delta T+\langle M(T),\nabla\rangle T=\eta(t,x). 
\end{equation}
Given $T_u,T_b\in \R$,  denote $\bT(x)=T_b+x_3(T_u-T_b)$,  where $x = (x_1, x_2,x_3)$. 

\begin{proposition}\label{p-EU}
For any $\tau \in (0, \infty)$, any function $T_0\in \HH$ and any (deterministic) right-hand side $\eta\in L^2(J_\tau,L^2)$,	problem~\eqref{boussinesq-reduced}, \eqref{IC-temperature} has a unique solution~$T\in \XX_\tau$ such that $T - \bT \in\XX_\tau^0$, and the corresponding vector field~$\uuu = M(T)$ belongs to the space~$\UU_\tau$.  
\end{proposition}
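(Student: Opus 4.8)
The plan is to reduce the problem to a semilinear parabolic equation with homogeneous Dirichlet data, construct a global (on any finite $[0,\tau]$) solution by a Galerkin approximation together with two a priori estimates, and then prove uniqueness by a direct energy argument. The whole construction is made soft by the regularising property of~$M$ (gain of two derivatives): the velocity is always more regular than the temperature, so the nonlocal advective nonlinearity behaves as a mild lower-order perturbation, and in particular it drops out of the basic energy identity.

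First I would set $\theta=T-\bT$. Since~$\bT$ is affine in~$x_3$ we have $\Delta\bT=0$, while $\nabla\bT=(0,0,T_u-T_b)$ is constant, and $M(T)=M(\theta)+M(\bT)$ by linearity of~$M$. Hence \eqref{boussinesq-reduced}, \eqref{IC-temperature} becomes
\begin{equation*}
	\p_t\theta-\Delta\theta+\langle M(\theta),\nabla\rangle\theta
	=\eta-\langle M(\theta),\nabla\rangle\bT-\langle M(\bT),\nabla\rangle\theta-\langle M(\bT),\nabla\rangle\bT,
	\qquad \theta(0)=T_0-\bT,
\end{equation*}
with $\theta=0$ on~$\p D$; the last three terms on the right-hand side are respectively linear and constant in~$\theta$, and each belongs to $L^2(J_\tau,L^2)$ with norm controlled by $C(\|\theta\|+\|\nabla\theta\|+1)$. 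I would look for $\theta\in\XX_\tau^0=L^2(J_\tau,H^2)\cap H^1(J_\tau,L^2)$; recalling the embedding $\XX_\tau^0\hookrightarrow C(J_\tau,H^1)$, the initial condition is attained in~$H^1$ and is compatible with $T_0-\bT\in H_0^1$.

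Next I would run a Galerkin scheme in the eigenbasis of the Dirichlet Laplacian on~$D$ (horizontal Fourier modes times $\sin(\pi\ell x_3)$), replacing at each level $M(\theta_N)$ by the exact Stokes solution of~$\theta_N$, which is still divergence-free and vanishes on~$\p D$. Two a priori bounds are then needed. Testing with~$\theta_N$ and using $\diver M(\cdot)=0$ together with the homogeneous boundary condition, the quadratic term $\langle\langle M(\theta_N),\nabla\rangle\theta_N,\theta_N\rangle$ and the term $\langle\langle M(\bT),\nabla\rangle\theta_N,\theta_N\rangle$ vanish, leaving $\tfrac{\dd}{\dd t}\|\theta_N\|^2+\|\nabla\theta_N\|^2\le C(\|\theta_N\|^2+\|\eta(t)\|^2+1)$, hence, by Gronwall (with constant depending on~$\tau$), a bound in $L^\infty(J_\tau,L^2)\cap L^2(J_\tau,H^1)$. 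Testing with~$-\Delta\theta_N$ and using $\|M(\theta_N)\|_{L^\infty}\le C\|M(\theta_N)\|_{H^2}\le C\|\theta_N\|$ together with Young's inequality, the nonlinear term is absorbed into $\tfrac14\|\Delta\theta_N\|^2+C\|\theta_N\|^2\|\nabla\theta_N\|^2$; Gronwall and elliptic regularity $\|\theta_N\|_{H^2}\le C\|\Delta\theta_N\|$ then give a bound in $L^\infty(J_\tau,H^1)\cap L^2(J_\tau,H^2)$, and reading $\p_t\theta_N$ off the equation bounds it in $L^2(J_\tau,L^2)$. Passing to the limit is routine: the Aubin--Lions lemma gives strong convergence of~$\theta_N$ in $L^2(J_\tau,H^1)$, hence of~$\nabla\theta_N$ in $L^2(J_\tau\times D)$, while $M(\theta_N)\to M(\theta)$ in $L^2(J_\tau,L^\infty)$ by continuity of $M:L^2\to H^2$, so $\langle M(\theta_N),\nabla\rangle\theta_N\to\langle M(\theta),\nabla\rangle\theta$ in $L^1(J_\tau\times D)$, which together with the uniform $L^2$ bound identifies the weak limit. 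Undoing the substitution, $T=\theta+\bT\in\XX_\tau$ with $T-\bT\in\XX_\tau^0$, and $\uuu=M(T)\in\UU_\tau$ follows from the regularising property of~$M$ applied to $T\in L^2(J_\tau,H^2)\cap H^1(J_\tau,L^2)$.

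For uniqueness, let $T^{(1)},T^{(2)}\in\XX_\tau$ be two solutions and put $w=T^{(1)}-T^{(2)}\in\XX_\tau^0$; then $w(0)=0$, $w=0$ on~$\p D$, and $\p_tw-\Delta w+\langle M(T^{(1)}),\nabla\rangle w+\langle M(w),\nabla\rangle T^{(2)}=0$. Testing with~$w$, the first advective term vanishes, and integrating the second by parts (using $\diver M(w)=0$ and $w|_{\p D}=0$) gives
\begin{equation*}
	\bigl|\langle\langle M(w),\nabla\rangle T^{(2)},w\rangle\bigr|
	=\bigl|\langle\langle M(w),\nabla\rangle w,T^{(2)}\rangle\bigr|
	\le\|M(w)\|_{L^6}\|\nabla w\|\,\|T^{(2)}\|_{L^3}
	\le\tfrac12\|\nabla w\|^2+C\|T^{(2)}\|_{H^1}^2\|w\|^2,
\end{equation*}
so that $\tfrac{\dd}{\dd t}\|w\|^2\le C\|T^{(2)}(t)\|_{H^1}^2\|w\|^2$ with $t\mapsto\|T^{(2)}(t)\|_{H^1}^2$ in $L^1(J_\tau)$ (since $\XX_\tau\hookrightarrow C(J_\tau,H^1)$); Gronwall and $w(0)=0$ force $w\equiv0$.

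I do not expect a genuine obstacle here — this is why the authors call the section standard. The only points requiring care are the bookkeeping of the functional setting (the maximal-regularity class~$\XX_\tau^0$ and its embedding into $C(J_\tau,H^1)$, which makes the $H^1$ initial datum meaningful) and the justification of the limit passage in the nonlocal term $\langle M(T),\nabla\rangle T$; both are handled by the two-derivative gain of~$M$, which is also what makes the advective term disappear from the basic energy identity and keeps all the estimates above closed.
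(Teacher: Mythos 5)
Your proposal is correct and follows essentially the same route as the paper: reduce to homogeneous Dirichlet data via $T-\bT$, then close $L^2$ and $H^1$ energy estimates using $\diver M(\cdot)=0$ and the two-derivative gain of~$M$ (your $\|M(\theta)\|_{L^\infty}\le C\|\theta\|_{L^2}$ bound is in fact a slightly cleaner way to absorb the advective term than the paper's $L^6$--$L^3$ interpolation). The only cosmetic difference is that the paper observes $M(\bT)=0$ (since $\bT\et$ is a gradient and can be absorbed into the pressure), which removes two of your lower-order terms, and it invokes a fixed-point argument where you run a Galerkin scheme; both are standard and your version is complete.
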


\begin{proof}
	We confine ourselves to the derivation of a priori estimates for solutions, since then the local existence and uniqueness of solution follow from a standard fixed point argument.  In addition,  a continuation of the local solution to the entire interval~$J_\tau$ is a standard consequence of a priori bounds. 

\smallskip
{\it Step~1: Reduction\/}. 	Define  $S = T - \bT$. Then $S = 0$ whenever $x_3 = 0$ or $x_3 = 1$, and the function $S_0 := S(x, 0) = T_0 - \bT$ belongs to $H^1_0(D)$.  In addition, $S$~satisfies the equation
\begin{equation} \label{fes}
	\p_t S -  \Delta S + \langle M(T),\nabla\rangle S + (T_u - T_b) M_3(T) = \eta(t,x), 
\end{equation}
where $M = (M_1, M_2, M_3)$. We claim that $M(T) = M(S)$. Indeed, by \eqref{velocity-field},  $\uuu = M(T)$ solves 
\begin{equation}
-\Delta \uuu +\nabla p = \Ra\, (S + \bT)\et, \quad \diver \uuu=0,
\end{equation}
supplemented with Dirichlet's boundary condition from \eqref{boundary-condition}.  Next,  note that the term $\Ra\, \bT= \Ra\, \partial_{x_3} (T_b x_3 + \frac{1}{2}(T_u-T_b)x_3^2)$ can be included into the pressure~$p$. Thus, $\uuu = M(T)$ solves Dirichlet's problem 
\begin{equation}\label{ues}
-\Delta \uuu +\nabla \tilde{p} = \Ra\,  S \et, \quad \diver \uuu=0\,.
\end{equation}
However, $M(S)$ satisfies~\eqref{ues} as well, with possibly modified pressure. The uniqueness of solutions of Stokes system implies $M(T) = M(S)$. 

Hence,  \eqref{fes} yields that the function~$S$ satisfies 
\begin{equation} \label{eqs}
	\p_t S - \Delta S + \langle M(S),\nabla\rangle S + (T_b - T_u) M_3(S) = \eta(t,x), 
\end{equation}
supplemented with the zero Dirichlet boundary conditions and compatible initial data. 

\smallskip
{\it Step~2: $L^2$-regularity\/}. 
Let us derive an a priori estimates for~$\uuu = M(S)$. Testing~\eqref{ues} with the divergence free~$\uuu$  yields
\begin{equation*}
\|\nabla \uuu\|^2 = \Ra\, \langle S,  u_3 \rangle 
\leq \Ra \,\|S\| \|\nabla \uuu\| 
\leq  \frac{1}{2}\|\nabla \uuu\|^2 + C \|S\|^2 \,,
\end{equation*}
where we used Poincar\'e's and Cauchy's inequalities. Note that~$C$ is a constant that depends only on~$\Ra$ and~$D$. Consequently, 
\begin{equation}\label{rou}
\|\nabla \uuu\| = \|M(S)\|_{H^1} \leq C \|S\| \,.
\end{equation}
Testing \eqref{eqs} with~$S$, we derive 
\begin{equation}\label{tse}
\frac{1}{2} \p_t \|S\|^2 +  \|\nabla S\|^2 = \langle \eta, S\rangle - (T_b - T_u) \langle M_3(S), S \rangle \,,
\end{equation}
where we used that $M(S)$ is divergence free. Using H\"older's and  Poincar\'e's inequalities and~\eqref{rou}, the terms on the right-hand side can be estimated as follows: 
\begin{align*}
| \langle \eta, S\rangle| &\leq \|\eta\| \,\|S\| \leq \frac{1}{2} \|\nabla S\|^2 + C  \|\eta\|^2 \,, \\
|(T_b - T_u)\langle M_3(S), S \rangle| &\leq C \|M(S)\| \,\|S\| \leq C \|S\|^2 \,.
\end{align*}
A substitution  into~\eqref{tse} results in
\begin{equation}\label{fel}
 \p_t \|S\|^2 +  \|\nabla S\|^2 \leq C \|S\|^2 + C \|\eta\|^2.
\end{equation} 
Gronwall's inequality implies that 
\begin{equation}\label{lis}
 \|S\|^2_{L^\infty( J_\tau, L^2)} 
+  
  \|S\|^2_{L^2( J_\tau, H^1)} 
 \leq C(\tau)( \|\eta\|_{L^2(J_\tau, L^2)}^2 + \|S_0\|^2)  \,, 
\end{equation}
where $C(\tau)$ depends on $\tau$ and fixed parameters of the problem. 

\smallskip
{\it Step~3: Higher regularity\/}.  Testing~\eqref{eqs} with~$\Delta S$, we derive 
\begin{equation}\label{mes}
\frac{1}{2} \p_t \|\nabla S\|^2 +  \|\Delta S\|^2 = - \langle \eta, \Delta S\rangle + (T_b - T_u)\langle M_3(S), \Delta S \rangle +  \bigl\langle \langle M(S),\nabla\rangle S, \Delta S \bigr\rangle \,.
\end{equation}
As above, we estimate the first two terms on the right-hand side: 
\begin{align*}
| \langle \eta, \Delta S\rangle| &\leq \|\eta\| \|\Delta S\| \leq \frac{1}{6} \|\Delta S\|^2 + C   \|\eta\|^2 \,, \\
|(T_b - T_u)\langle M_3(S), \Delta S \rangle| &\leq C \|M(S)\| \|\Delta S\| \leq  \frac{1}{6} \|\Delta S\|^2 + C   \|S\|^2 \,.
\end{align*}
To bound the last term, note that, by elliptic regularity, we have $\|S\|_{H^2} \leq C \|\Delta S\|$. Using H\" older's inequality, Sobolev's embedding, \eqref{rou}, and interpolation, we derive
\begin{align*}
\bigl|\bigl\langle \langle M(S),\nabla\rangle S, \Delta S \bigr\rangle\bigr| 
&\leq \|\Delta S\| \|M(S)\|_{L^6} \|\nabla S\|_{L^3}\leq 
\|\Delta S\|  \|M(S)\|_{H^1} \|S\|_{H^{\frac{3}{2}}}  \\
&\leq C \|\Delta S\|  \|S\| \|S\|_{H^2}^{\frac{3}{4}} \|S\|^{\frac{1}{4}}
\leq \frac{1}{6}  \|\Delta S\|^2 + C \|S\|^{10} \,.
\end{align*}
A substitution  into~\eqref{mes} yields
\begin{equation} \label{diffin-forS}
 \p_t \|\nabla S\|^2 +  \|\Delta S\|^2 \leq C  (1 +  \|\eta\|^2 +  \|S\|^{10}) \,.
\end{equation}
Consequently, \eqref{lis} implies that 
\begin{equation}\label{sht}
 \|S\|^2_{L^\infty(J_\tau, H^1)}+\|S\|^2_{L^2(J_\tau, H^2)} 
 \leq C(\tau)( \|\eta\|_{L^2(J_\tau, L^2)} + 1 +  \|S_0\|)^{10} + \|S_0\|^{2}_{H^1}\,.
\end{equation}
Furthermore, it follows from~\eqref{eqs} that $S \in H^1(J_\tau, L^2)$. The same assertions clearly hold true for~$T$, and therefore $T \in \XX_\tau$. Finally, the elliptic regularity of Stokes' operator implies that $\uuu \in \UU_\tau$.
\end{proof}

\subsection{Compact absorbing set}
\label{s-absorbing-set}

In this section,  we show that there exists a bounded set in~$H^3$ to which any solution of~\eqref{boussinesq-reduced} belong for large time. This implies, in particular, that there is a compact absorbing set in~$H^1$. The $H^3$ regularity of solutions will also be needed in the proof of unique continuation for the linearised problem. Our assertions follow from  a priori estimates, which are based on standard multipliers techniques. Since the results hold path-wise,  we prove them for a fixed realisation of the noise,  that is,  for deterministic~$\eta$ with the specified regularity. 

\begin{proposition}\label{p-dissipativity}
Suppose there are $r, r_0 > 0$ such that the right-hand side in~\eqref{boussinesq-reduced} has the form~\eqref{eta-form} where $\{\eta_k\}_{k\ge1}$ belongs to a fixed ball~$B_{\XX^0}(r)$,  and the boundary temperatures~$T_u$ and~$T_b$ satisfy the inequality $|T_u|+|T_b|\le r_0$.  Then,  there is $C=C(r, r_0)>0$ such that,  for any $T_0\in \HH$,  with $\|T_0\|_{H^1}\le R$,  there is an integer~$\tau=\tau(r, r_0, R)\ge1$
with 
\begin{equation} \label{tau-bound}
	\tau\le C(r, r_0)\ln(R+2),
\end{equation}
such that the solution~$T$ of~\eqref{boussinesq-reduced}  satisfies the inequality 
\begin{equation} \label{H2norm}
	\|T(t)\|_{H^3}\le C(r, r_0)\quad\mbox{for any $t\ge\tau$}. 
\end{equation}
\end{proposition}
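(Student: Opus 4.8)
The plan is to establish the dissipativity estimate \eqref{H2norm} by a bootstrap in Sobolev regularity, following the pattern of the a priori estimates already derived in Proposition~\ref{p-EU}, but now tracking the explicit dependence on time and on the initial size~$R$. Work throughout with the shifted unknown $S=T-\bT$, which solves~\eqref{eqs} with zero Dirichlet data. First I would revisit the $L^2$-estimate \eqref{fel}: since the right-hand side~$\eta$ is, by~\eqref{eta-form}, built from i.i.d.\ pieces $\eta_k$ lying in a fixed ball $B_{\XX^0}(r)$, one has a uniform bound $\|\eta\|_{L^2(J_\tau,L^2)}^2\le C(r)\tau$, and in fact on each unit window $[k-1,k)$ the forcing has $L^2$-norm bounded by $C(r)$. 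The key point is that the ``bad'' linear term $(T_b-T_u)M_3(S)$ is controlled by $C\|S\|^2$, so \eqref{fel} gives $\p_t\|S\|^2+\|\nabla S\|^2\le C\|S\|^2+C(r)$; but because of Poincaré $\|\nabla S\|^2\ge c\|S\|^2$, the coefficient in front of $\|S\|^2$ on the right is not automatically negative. This forces an argument: either absorb the linear term using that the Stokes operator $M_3$ is, after integration by parts, better than $O(\|S\|)$ in the relevant pairing (one should check $|\langle M_3(S),S\rangle|\le C\|S\|\,\|M(S)\|\le C\|S\|^2$ does not suffice, so instead use the spectral gap of $-\Delta$ with Dirichlet conditions: replacing $\|\nabla S\|^2$ by $\lambda_1\|S\|^2$ and requiring $\lambda_1 > C$), or — more robustly — treat the term perturbatively over one unit time step and iterate the discrete Markov structure. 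In either case one concludes that $\|S(t)\|^2\le e^{Ct}\|S_0\|^2+C(r,r_0)$ for $t$ in $[0,\tau]$, and, choosing $\tau\sim C\ln(R+2)$ and then running the estimate on later unit windows, that $\|S(t)\|\le C(r,r_0)$ for all $t\ge\tau_0$ with $\tau_0$ of the stated logarithmic size. Here I expect the main obstacle to lie: showing the dynamics is genuinely contractive in $L^2$ despite the destabilising buoyancy term, which may require either a smallness/spectral condition or exploiting that $T_b-T_u$ enters only through the fixed numbers $r_0$ and the Poincaré constant of the thin strip — a point the authors may simply absorb into $C(r,r_0)$ by a Gronwall argument over the growing interval, accepting the exponential-in-$\tau$ prefactor which is then killed by the logarithmic choice of~$\tau$.

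Once the $L^2$-bound is in hand on $[\tau_0,\infty)$, I would upgrade to $H^1$ exactly as in Step~3 of Proposition~\ref{p-EU}: testing~\eqref{eqs} with $\Delta S$ yields the differential inequality~\eqref{diffin-forS}, $\p_t\|\nabla S\|^2+\|\Delta S\|^2\le C(1+\|\eta\|^2+\|S\|^{10})$. On any unit window beyond $\tau_0$ the quantities $\|\eta\|_{L^2}$ and $\|S\|_{L^\infty(L^2)}$ are already bounded by $C(r,r_0)$, so integrating over, say, $[\tau_0,\tau_0+1]$ and using the mean-value argument to pick a good starting time gives $\|\nabla S(t)\|\le C(r,r_0)$ for $t\ge\tau_0+1$. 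For the $H^2$ and $H^3$ bounds I would differentiate~\eqref{eqs} in time (legitimate because the regularity hypothesis~\hyperlink{(D)}{(D)} ensures $\p_t\eta\in L^2(J\times D)$ on each window, with norm bounded via the fixed ball $B_{\XX^0}(r)$) to get an equation for $\p_t S$, test it with $\p_t S$ and then with $\Delta\p_t S$, thereby controlling $\|\p_t S\|_{L^\infty(L^2)}$ and $\|\p_t S\|_{L^2(H^1)}$; then, reading~\eqref{eqs} as the elliptic problem $-\Delta S=\eta-\p_t S-\langle M(S),\nabla\rangle S-(T_b-T_u)M_3(S)$ at a fixed good time, elliptic regularity for the Dirichlet Laplacian plus the regularising property of~$M$ promotes $S$ to $H^2$ and then, bootstrapping once more (the nonlinearity $\langle M(S),\nabla\rangle S$ is in $H^1$ once $S\in H^2$, by Sobolev multiplication in dimension three, and $M_3(S)\in H^3$), to $H^3$. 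All constants depend only on $r$, $r_0$, and the fixed parameters $\Ra$ and $D$, not on $R$.

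Assembling the pieces: choose $\tau=\tau(r,r_0,R)$ to be the first integer exceeding $\tau_0 + 2$ with $\tau_0\le C(r,r_0)\ln(R+2)$ coming from the $L^2$ step, so that~\eqref{tau-bound} holds; the $H^1$, then $H^2$--$H^3$ upgrades each cost only finitely many additional unit windows, absorbed into the constant in~\eqref{tau-bound}. For $t\ge\tau$ the chain of estimates delivers $\|S(t)\|_{H^3}\le C(r,r_0)$, hence $\|T(t)\|_{H^3}\le \|S(t)\|_{H^3}+\|\bT\|_{H^3}\le C(r,r_0)$ since $\bT$ is affine with slope controlled by $r_0$. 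The only genuinely delicate step, as noted, is converting the differential inequality in Step~1 into a true absorbing estimate in $L^2$; the higher-regularity steps are then routine propagation of bounds along the lines already executed in Proposition~\ref{p-EU}, just localised to unit time windows and made uniform in the initial data.
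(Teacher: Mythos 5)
You have correctly located the crux of the proposition---making the $L^2$ estimate genuinely absorbing despite the destabilising coupling term---but you have not resolved it, and the workaround you sketch does not work. Working with $S=T-\bT$ and \eqref{fel} gives $\p_t\|S\|^2+\|\nabla S\|^2\le C\|S\|^2+C(r)$ with a constant $C$ proportional to $\Ra\,r_0$, which in the regime of interest exceeds the Poincar\'e constant $\lambda_1$; this is exactly the Rayleigh--B\'enard instability, so no spectral or smallness condition is available (the proposition must hold for \emph{all} $T_b>T_u$ with $|T_b|+|T_u|\le r_0$ and all $\Ra$). Your fallback---accept the Gronwall bound $\|S(t)\|^2\le e^{Ct}\|S_0\|^2+C(r,r_0)$ and claim the exponential-in-$\tau$ prefactor is ``killed by the logarithmic choice of $\tau$''---is backwards: with $\tau\sim C\ln(R+2)$ the prefactor $e^{C\tau}\|S_0\|^2$ is of order $R^{C'+2}$, a bound that grows with $R$, whereas \eqref{H2norm} requires a bound \emph{independent} of $R$ for $t\ge\tau$. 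The logarithmic choice of $\tau$ only helps when the exponential decays, i.e.\ when one has $\|T(t)\|^2\le e^{-\lambda_1 t}\|T_0\|^2+C(r,r_0)$, and your argument never produces such a decaying estimate. So Step~1 of your proposal does not establish the absorbing property, and everything downstream relies on it.

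The missing idea, which is the heart of the paper's proof, is to avoid the shifted variable $S$ in the $L^2$ step and instead split $T=T^*+T^\dagger$, where $T^*$ solves the linear advection--diffusion equation \eqref{tte} with the forcing $\eta$, zero Dirichlet data and initial datum $T_0$, while $T^\dagger$ solves the unforced equation \eqref{etd} with zero initial datum and the inhomogeneous boundary values $T_b$, $T_u$; both are advected by the same divergence-free field $M(T)$ treated as given. For $T^*$ the advection term drops out of the energy identity and there is no buoyancy term at all, so one gets the genuinely contractive bound \eqref{L2normT*}, $\|T^*(t)\|^2\le e^{-\lambda_1 t}\|T_0\|^2+(C_4r)^2$; for $T^\dagger$ the parabolic maximum principle gives the uniform bound \eqref{Linfty-Tdagger}, $\|T^\dagger(t)\|_{L^\infty}\le r_0$. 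Adding the two yields $\|T(t)\|\le C(r,r_0)$ after a time of order $\ln R$, which is what \eqref{tau-bound} encodes; the destabilising term never appears because the boundary data are handled by the maximum principle rather than by energy estimates. Once this $L^2$ absorbing bound is in place, your higher-regularity bootstrap is broadly viable (the paper proceeds slightly differently, via \eqref{diffin-forS} on unit windows and then maximal parabolic regularity for the heat equation with right-hand side in $L^2(H^1)$ and then $L^2(H^2)$, rather than differentiating in time), but without the decomposition-plus-maximum-principle step the proof does not get off the ground.
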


\begin{proof}
{\it Step~1. Reduction to $H^2$ initial condition\/}. Since $\eta_k \in B_{\XX^0}(r)$ for all $k\ge1$, Theorem~3.1 in~\cite[Chapter~I]{LM1972} implies that
\begin{equation} \label{H1-eta}
	\rho:=\sup_{t\ge 0}\|\eta(t)\|_{H^1}\le C_1r. 
\end{equation}
By \eqref{sht},  there is $t_0\in(0,1)$ such that
$$
\|T(t_0)\|_{H^2}
\le \|\bar T\|_{H^2}+\|S(t_0)\|_{H^2}
\le C_2(1 + \rho+R+r_0)^{5}. 
$$
We shift our initial time to $t_0$ and assume without loss of generality that 
\begin{equation} \label{T0H2}
\|T_0\|_{H^2} \leq C_2(1 + \rho + R + r_0)^{5} =: R_1. 
\end{equation}
In particular, by Sobolev's embedding, we have $T_0\in C(\overline D)$ and 
\begin{equation} \label{Linfty-norm}
	\|T_0\|_{L^\infty}\le C_3R_1. 
\end{equation}

\smallskip
{\it Step~2. Bound for the $H^1$-norm\/}. 
Let $T^*$ be the solution of the linear parabolic problem
\begin{align} \label{tte}
	\p_t T^*- \Delta T^*+ \langle M(T), \nabla\rangle T^*&=\eta(t,x), \\
	T^*(0)= T_0, \quad 
		T^*\bigr|_{\partial D} &= 0 \,.\label{T*IB}
\end{align}
Since~$T$ solves~\eqref{boussinesq-reduced},  the function $T^\dagger := T - T^*$ satisfies the equations 
\begin{equation}\label{etd}
\begin{aligned}
	\p_t T^\dagger - \Delta T^\dagger+ \langle M(T), \nabla \rangle T^\dagger &= 0, \\
	T^\dagger(0) = 0, \quad T^\dagger\bigr|_{x_3=0}=T_b, \quad
	T^\dagger\bigr|_{x_3=1}&=T_u.
\end{aligned}
\end{equation}
We first estimate~$T^*$.  Testing~\eqref{tte} with $T^*$,  integrating by parts,  using that $M(T)$ is divergence free,  and usign Poincar\' e's and Young's inequalities,  we obtain
\begin{equation*}
\frac{1}{2}\p_t \|T^*\|^2+\|\nabla T^*\|^2
= \langle \eta, T^*\rangle 
\leq \|\eta\| \,\|T^*\| 
\leq \frac{1}{2} \|\nabla T^*\|^2 + C \|\eta\|^2 \,.
\end{equation*}
A rearrangement yields 
\begin{equation*}
\p_t  \|T^*\|^2 + \|\nabla T^*\|^2 \leq 2C  \|\eta\|^2.
\end{equation*}
Applying Poincar\'e's and Gronwall's inequalities and using~\eqref{H1-eta}, we derive
\begin{equation} \label{L2normT*}
\|T^*(t)\|^2 
\leq e^{- \lambda_1 t} \|T_0\|^2 + (C_4r)^2,
\end{equation}
where $\lambda_1>0$ is the first eigenvalue of the Dirichlet Laplacian in~$D$. 

To estimate $T^\dagger$, we note that $M(T)$ is a continuous function of time with range in~$L^\infty(D)$ and apply the maximum principle to~\eqref{etd}. Since $|T_u|+|T_b|\le r_0$, we obtain
\begin{equation} \label{Linfty-Tdagger}
\|T^\dagger(t)\|_{L^\infty}\le r_0 \quad\mbox{for all $t \ge0$}. 
\end{equation}
Combining~\eqref{L2normT*} and \eqref{Linfty-Tdagger}, we derive
$$
\|T(t)\|\leq e^{-\lambda_1t/2}\,\|T_0\|+C_4r+r_0|D|\quad\mbox{for $t\ge0$}. 
$$
In view of~\eqref{Linfty-norm}, we can find a universal constant $C_5>0$ such that
\begin{equation} \label{ltt}
\|T(t)\|\le C_5(r+r_0+1)\quad\mbox{for $t\ge \tau_1(R_1):=C_5\ln R_1$}. 
\end{equation}

We now fix any $s \in [0, 1]$ and any $t\ge \tau_1+1$, and integrate~\eqref{diffin-forS} over an interval $[t - s, t + 1]$ of length~$1 + s$.  Using the relation $T=\bar T+S$ (recall $\bar T = T_b + x_3(T_u - T_b)$) and inequality~\eqref{ltt}, for $t\ge \tau_1+1$ we derive
\begin{equation} \label{H1-norm-p}
\|T(t+1)\|_{H^1}^2+\int_t^{t+1}\|T\|_{H^2}^2\dd\tau
\le C_6(r + r_0 + 1)^{10} + \|S(t - s)\|_{H^1}^2 \,,
\end{equation}
where $C_6$ is independent of $s$ and $t$. Integrating~\eqref{H1-norm-p} in~$s\in[0,1]$ and using that all terms except the last one are independent of~$s$, we obtain 
\begin{equation*} 
\|T(t+1)\|_{H^1}^2+\int_t^{t+1}\|T\|_{H^2}^2\dd\tau\le C_6(r + r_0 + 1)^{10} + \int_{t - 1}^t \|S\|_{H^1}^2\dd\tau\,.
\end{equation*}
Integrating~\eqref{fel}, using~\eqref{ltt}, and taking the supremum over $t\ge\tau_1+1$, we arrive at the inequality
\begin{equation} \label{H1-norm}
	\sup_{t\ge \tau_1+2}\biggl(\|T(t)\|_{H^1}^2+\int_t^{t+1}\|T\|_{H^2}^2\dd\tau\biggr)\le C_7(r + r_0 + 1)^{10}\,.
\end{equation}

\smallskip
{\it Step~3. Bound for the $H^2$-norm\/}. 
Fix any $t \geq \tau_1+3$.  Then,  by \eqref{H1-norm} there is $t_0 \in [t-1,t]$ such that 
\begin{equation}
\|T (t_0) \|_{H^2} \leq C(r + r_0 + 1)^{5}, 
\end{equation}
and the same estimate holds for $S(t_0)$,  where $S = T - \bar{T}$ (see Proposition \ref{p-EU}).  In addition,  $S$ satisfies \eqref{H1-norm} with~$T$ replaced by~$S$ and solves the equation
\begin{equation} \label{heat-f}
	\p_t S-\Delta S = f(t,x):=\eta(t,x)-\langle M(T),\nabla\rangle S-(T_u-T_b)M_3(T),
\end{equation}
supplemented with zero boundary conditions.  We claim that $f \in L^2([t_0, t], H^1)$, with a bound that depends only on~$r$ and~$r_0$.  Indeed,  by assumption, we have $\eta \in L^2([t_0, t], H^1)$. Using the smoothing properties of~$M$ and inequality~\eqref{ltt}, for any~$s \geq t_0$ we derive
\begin{equation}\label{MT-H2}
\|M(T(s))\|_{H^2} \leq C \|T(s)\|_{L^2} \leq C(r + r_0 + 1) \,.
\end{equation}
Combining this with the embedding $H^2 \hookrightarrow L^\infty$ and inequality~\eqref{H1-norm}, for any $s\geq t_0$ we obtain 
\begin{align*}
\|\langle M(T),\nabla\rangle S\|_{H^1} 
&\leq C (\|\nabla M(T)\|_{L^\infty} \|S\|_{H^1} + \|M(T)\|_{L^\infty}\|S\|_{H^2})\\
&\leq C (\|M(T)\|_{H^3} \|S\|_{H^1} + \|M(T)\|_{H^2} \|S\|_{H^2}) \\
&\leq C (\|T\|_{H^1} \|S\|_{H^1} + \|T\|\, \|S\|_{H^2}) \\
&\leq C \bigl((r + r_0 + 1)^{10} +  (r + r_0 + 1) \|S\|_{H^2}\bigr)\,,
\end{align*}
where we skipped the argument~$s$ of all the functions to simplify formulas. Hence,  using~\eqref{H1-norm} and~\eqref{ltt}, we can write
\begin{equation}\label{f-L2H1}
\|f\|_{L^2([t_0, t], H^1)} \leq C (r + r_0 + 1)^{10} \,.
\end{equation}
Finally,  the parabolic regularity of the heat equation yields 
\begin{equation}\label{H2-bound}
\begin{aligned}
\|S\|_{L^\infty([t_0, t], H^2)} + \|S\|_{L^2([t_0, t], H^3)} 
&\leq C\bigl(\|S(t_0)\|_{ H^1} + \|f\|_{L^2([t_0, t], H^1)}\bigr) \\
&\leq  C (r + r_0 + 1)^{10}.
\end{aligned}
\end{equation}
Since $\bar{T}$ is smooth,  the same bound holds with~$S$ replaced by~$T$.

\smallskip
{\it Step~4. Bound for the $H^3$-norm\/}. 
With \eqref{H2-bound} at our disposal,  we reiterate the procedure from Step 3 and obtain an improved regularity. Fix any $t \geq \tau_1+3$.  Then,  by~\eqref{H2-bound}, there is $t_0 \in [t - 1, t]$ such that 
\begin{equation}
\|S (t_0) \|_{H^3} \leq C(r + r_0 + 1)^{10}. 
\end{equation}
In view of~\eqref{heat-f} and the boundedness of~$\bar T$ in~$H^3$, inequality~\eqref{H2norm} will be established if we prove that the function~$f$ defined in Step 3 belongs to the space $L^2([t_0, t], H^2)$, with a bound that depends only on~$r$ and~$r_0$; cf.~\eqref{f-L2H1}. 

To see this, we note that $\eta \in L^2([t_0, t], H^2)$ by assumption. Since the space~$H^2$ is a ring,  using~\eqref{MT-H2}, for $s\in[t_0,t]$ we can write
\begin{align*}
\|M(T(s)),\nabla\rangle S(s)\|_{H^2}
&\le C\,\|M(T(s))\|_{H^2}\,\|\nabla S(s)\|_{H^2}\\
&\le C\,(r + r_0 + 1)\,\|S(s)\|_{H^3}. 
\end{align*}
Combining this with~\eqref{MT-H2} and~\eqref{H2-bound}, we thus obtain the bound
\begin{equation}
\|f\|_{L^2([t_0, t], H^2)} 
\leq C (r + r_0 + 1)^{11}\,.
\end{equation}
This completes the proof of the proposition. 
\end{proof}

\subsection{Regularity of the resolving operator}
\label{s-resolving}
In view of Proposition~\ref{p-EU}, we can define a function~$\sS:\HH\times L^2(J\times D)\to H^1$ mapping a pair~$(T_0,\eta)$ to~$T(1,\cdot)$, where $T\in\XX$ is the solution of problem~\eqref{boussinesq-reduced}, \eqref{IC-temperature}. The following proposition establishes a regularity property for~$\sS$. 

\begin{proposition}\label{p-resolving-operator}
The mapping $\sS:\HH\times L^2(J\times D)\to H^1(D)$ is analytic. 
\end{proposition}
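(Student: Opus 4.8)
The plan is to exhibit the solution of problem~\eqref{boussinesq-reduced},~\eqref{IC-temperature} as an implicitly defined function of the data~$(T_0,\eta)$ and to invoke the analytic implicit function theorem in Banach spaces. First I would remove the inhomogeneous boundary condition by passing to $S=T-\bT$, which solves
\begin{equation*}
\p_tS-\Delta S+\langle M(S),\nabla\rangle S+(T_b-T_u)M_3(S)=\eta,\qquad S(0)=T_0-\bT,
\end{equation*}
with zero Dirichlet boundary conditions (see Step~1 of the proof of Proposition~\ref{p-EU}), and note that $\HH-\bT=H_0^1(D)$, since the boundary of~$D$ consists only of the two horizontal faces. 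Accordingly, I define a map
\begin{equation*}
\Phi:\XX^0\times H_0^1(D)\times L^2(J\times D)\to L^2(J\times D)\times H_0^1(D)
\end{equation*}
by $\Phi(S,S_0,\eta)=\bigl(\p_tS-\Delta S+\langle M(S),\nabla\rangle S+(T_b-T_u)M_3(S)-\eta,\ S(0)-S_0\bigr)$. The operators $\p_t$, $\Delta$, $M_3$, the identity in~$\eta$, and the trace $S\mapsto S(0)$ are bounded and linear on the corresponding spaces, the trace being continuous from~$\XX^0$ onto~$H_0^1(D)$ by~\cite{LM1972}; moreover $(S,\widetilde S)\mapsto\langle M(S),\nabla\rangle\widetilde S$ is a bounded bilinear map from $\XX^0\times\XX^0$ to $L^2(J\times D)$, because the regularising property of~$M$ and the embeddings $\XX^0\hookrightarrow C(J,H^1)\hookrightarrow L^\infty(J,L^2)$ and $H^2\hookrightarrow L^\infty(D)$ give $\|\langle M(S),\nabla\rangle\widetilde S\|_{L^2(J\times D)}\le C\,\|S\|_{L^\infty(J,L^2)}\,\|\widetilde S\|_{L^2(J,H^1)}$. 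Being a polynomial of degree two in its arguments, $\Phi$ is analytic.

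By Proposition~\ref{p-EU} with $\tau=1$, for every $(S_0,\eta)$ there is a unique $S\in\XX^0$ with $\Phi(S,S_0,\eta)=0$; write it as $S=\Psi(S_0,\eta)$. Fix a point $(S_0^\star,\eta^\star)$ and set $S^\star=\Psi(S_0^\star,\eta^\star)$. The partial differential $D_S\Phi(S^\star,S_0^\star,\eta^\star)$ sends $h\in\XX^0$ to
\begin{equation*}
\bigl(\p_th-\Delta h+\langle M(h),\nabla\rangle S^\star+\langle M(S^\star),\nabla\rangle h+(T_b-T_u)M_3(h),\ h(0)\bigr),
\end{equation*}
that is, it is the linearisation of~\eqref{boussinesq-reduced} at~$S^\star$: a linear, non-autonomous and non-local parabolic operator. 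The heart of the argument is to prove that $D_S\Phi(S^\star,S_0^\star,\eta^\star)$ is a topological isomorphism from~$\XX^0$ onto $L^2(J\times D)\times H_0^1(D)$, and this is the step I expect to be the main obstacle.

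To establish the isomorphism I would follow the standard scheme. Redoing the multiplier estimates of Proposition~\ref{p-EU} for this \emph{linear} equation --- testing with~$h$ and then with~$\Delta h$, and absorbing the non-local first-order term $\langle M(h),\nabla\rangle S^\star$ exactly as in~\eqref{MT-H2} and the estimates following it, using that $M$ is smoothing --- yields the a priori bound $\|h\|_{\XX^0}\le C\bigl(\|g\|_{L^2(J\times D)}+\|h_0\|_{H^1}\bigr)$ for every solution of $D_S\Phi(S^\star,S_0^\star,\eta^\star)h=(g,h_0)$. This gives injectivity and closed range. Surjectivity follows from a Galerkin approximation (or from the method of continuity), regarding all zeroth-order, first-order and non-local terms as bounded perturbations of the heat operator $\p_t-\Delta$, for which maximal $L^2$-regularity in~$\XX^0$ is classical. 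The open mapping theorem then completes the proof that $D_S\Phi$ is an isomorphism.

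With these hypotheses in hand, the analytic implicit function theorem provides, in a neighbourhood of~$(S_0^\star,\eta^\star)$, an analytic map solving $\Phi=0$; by the uniqueness in Proposition~\ref{p-EU} it coincides with~$\Psi$, and since $(S_0^\star,\eta^\star)$ was arbitrary, $\Psi:H_0^1(D)\times L^2(J\times D)\to\XX^0$ is analytic. It remains to observe that $\sS(T_0,\eta)=\bT+\bigl(\Psi(T_0-\bT,\eta)\bigr)(1,\cdot)$, which is the composition of the affine map $T_0\mapsto T_0-\bT$, the analytic map~$\Psi$, the bounded linear evaluation $S\mapsto S(1,\cdot)$ (continuous from~$\XX^0$ to~$H^1(D)$, again because $\XX^0\hookrightarrow C(J,H^1)$), and translation by the fixed element~$\bT$. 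Since a composition of analytic maps is analytic, $\sS:\HH\times L^2(J\times D)\to H^1(D)$ is analytic, as claimed.
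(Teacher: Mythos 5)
Your proof is correct, and at its core it is the same strategy as the paper's: reduce to $S=T-\bT$ with homogeneous boundary data, observe that the equation is defined by a continuous polynomial (hence analytic) map of the unknown and the data, check that the linearisation is invertible by reducing to the well-posedness of the linearised parabolic problem, and invoke the analytic implicit function theorem. The only genuine difference is the choice of implicit equation: the paper first inverts the heat operator, writing $S=A(S_0,\eta)+Av$ and applying the implicit function theorem to $F(v,z)=v+G(Av,z)=0$ for the auxiliary unknown $v\in L^2(J\times D)$, so that $D_vF$ has the form ``identity plus lower-order'' and its invertibility is read off directly from the solvability of the linearised equation; you instead apply the theorem to the full operator $\Phi(S,S_0,\eta)=\bigl(\p_tS-\Delta S+\dots-\eta,\ S(0)-S_0\bigr)$ acting on $S\in\XX^0$, which is more direct but requires you to also verify that the principal part $h\mapsto(\p_th-\Delta h,\,h(0))$ is an isomorphism of $\XX^0$ onto $L^2(J\times D)\times H_0^1(D)$ (maximal $L^2$-regularity plus the trace theorem), on top of the same energy estimates for the non-local perturbation. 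Your key bilinear bound $\|\langle M(S),\nabla\rangle\widetilde S\|_{L^2(J\times D)}\le C\|S\|_{L^\infty(J,L^2)}\|\widetilde S\|_{L^2(J,H^1)}$ is valid and plays exactly the role of the paper's ``regularising property of $M$''; both routes ultimately rest on the same linear solvability statement, so the two proofs are interchangeable, with yours trading the paper's Duhamel-type change of unknown for a slightly heavier functional-analytic setup.
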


\begin{proof}
As in the proof of Proposition~\ref{p-EU}, we set $\bar T=T_b+x_3(T_u-T_b)$ and  write $T=\bar T+S$. The function~$S$ belongs to~$\XX^0$,  solves~\eqref{eqs}, with zero Dirichlet boundary condition, and satisfies $S(0)=S_0 := T_0 - \bT \in H^1_0(D)$. Notice that, to prove the analyticity of~$\sS$, it suffices to show that $S(1)$ depends  analytically on $(S_0,\eta)\in H_0^1\times L^2(J\times D)$. 
 
For given $S_0\in H_0^1$ and $\eta\in L^2(J\times D)$, let $z = A(S_0,\eta)$ be the solution of the inhomogeneous heat equation 
\begin{equation}
\p_t z -  \Delta z = \eta, \qquad z(x, 0) = S_0(x)
\end{equation}
with zero Dirichlet boundary conditions. The standard parabolic regularity yields $z \in \XX^0 \cap C(J_\tau, H^1)$, so that~$A$ is a linear and continuous (hence, analytic) operator of its arguments. In the case when $S_0=0$, we write $A\eta$  instead of~$A(0,\eta)$.

A solution of~\eqref{eqs} is sought in the form $S=Av+z$, where $z=A(S_0,\eta)$ and $v\in\ZZ:=L^2(J\times D)$.  Since $S$ solves \eqref{lis},  
then $w = Av$ satisfies 
\begin{equation}\label{eqaw}
w_t - \Delta w + G(w,  z) = 0 \,, \qquad w(x, 0) = 0 \,,
\end{equation}
where for $w,z\in\XX^0$ we set 
\begin{equation} \label{Gwz}
G(w,z)=\langle M(w + z),\nabla\rangle (w + z)+(T_b - T_u)M_3(w + z).	
\end{equation}
In a different notation,  $v$ satisfies  
\begin{gather}\label{wwe}
F(v,z):=v+G(Av,z)=0 \,.
\end{gather}
In addition, by Proposition~\ref{p-EU}, for any $z\in\XX^0$ there is a unique $w \in \XX^0$,  satisfying \eqref{eqaw},  and therefore unique $v\in\ZZ$ satisfying~\eqref{wwe}. 
The regularizing properties of the linear operators~$M$ and~$A$ yield that $F$ is an analytic (in fact polynomial)  map from $\ZZ \times \XX^0$ to $\ZZ$.

Next, we claim that for any fixed $(v,z)\in\ZZ\times\XX^0$ satisfying $F(v, z) = 0$, the derivative $D_v F(v, z)$ is an invertible linear operator on~$\ZZ$. Indeed,  by Banach's inverse mapping theorem, it suffices to show that $D_v F(v, z)$ is a bijection from~$\ZZ$ to~$\ZZ$. Thus, it is enough  to prove that, for each $f \in \ZZ$, the equation   
\begin{equation} \label{linearised}
	D_v F(v, z)[\varphi] = f
\end{equation}
has a unique solution $\varphi\in\ZZ$.  In view  of~\eqref{wwe} and~\eqref{Gwz}, we have
$$
D_v F(v, z)[\varphi]=\varphi + \langle M( A\varphi),\nabla\rangle(w + z) + \langle M(w + z),\nabla\rangle (A\varphi) + (T_b - T_u)M_3(A\varphi),
$$
where $w=Av$.  By the parabolic regularity,  $A: \ZZ \to \XX^0$ is an invertible map,  and consequently if 
$\psi := A \varphi$,  then the unique solvability of~\eqref{linearised} is equivalent to the existence of unique solution $\psi \in \XX^0$ of the equation
\begin{gather*}\label{ppf}
 \p_t \psi -  \Delta \psi + \langle M( \psi),\nabla\rangle(w + z) + \langle M(w + z),\nabla\rangle \psi + (T_b - T_u)M_3(\psi) = f,
\end{gather*}
supplemented with the zero initial and boundary conditions. 
This problem is the linearisation of~\eqref{boussinesq-reduced}, \eqref{IC-temperature}, and its unique solvability in~$\XX^0$ follows by standard energy estimates,  combined with
regularization of $M$ and Gronwall's inequality (cf.  proof of Proposition~\ref{p-EU}.)

Finally, since~$F$ is an analytic function with invertible partial derivative $D_v F(v, z)$, by the analytic implicit function theorem (see~\cite[Theorem~3.3.2]{berger1977} or~\cite{dieudonne1969}), the unique solution~$v\in\ZZ$ depends analytically on~$z\in\XX^0$.  Also,  $z$ is a solution of heat equation,  which depends analytically on the data $(S_0,\eta)\in H_0^1\times\ZZ$.  By 
\eqref{sht},  the map $S \in \XX^0 \mapsto S(1) \in H^1_0$ is a well defined linear operator,  and therefore it is analytic.  Overall,  we proved that   
$S(1)$ depends analytically on $(S_0,\eta)\in H_0^1\times\ZZ$, as desired.
\end{proof}

\begin{corollary} \label{c-analytic}
	The map~$\sS$ is analytic from $\HH\times \XX$ to $H^2(D)$.
\end{corollary}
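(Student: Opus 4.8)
The plan is to bootstrap from Proposition~\ref{p-resolving-operator}, which already gives analyticity of $\sS$ as a map into $H^1(D)$; what remains is to upgrade both the target smoothness (from $H^1$ to $H^2$) and the source space (from $\HH\times L^2(J\times D)$ to $\HH\times\XX$), and then check that analyticity is preserved under these upgrades. The key observation is that $\XX\hookrightarrow L^2(J\times D)$ continuously, so restricting $\sS$ to the smaller domain $\HH\times\XX$ keeps it analytic into $H^1(D)$ for free (composition with a bounded linear, hence analytic, inclusion). Thus the only genuine work is the gain of one derivative in the target.

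First I would, as in the two preceding proofs, reduce to $S=T-\bar T\in\XX^0$ solving~\eqref{eqs} with zero boundary data and $S_0=T_0-\bar T\in H_0^1$, and recall that it suffices to show $S(1)$ depends analytically on $(S_0,\eta)$, now with $(S_0,\eta)\in H_0^1\times\XX$. I would rerun the implicit-function-theorem argument of Proposition~\ref{p-resolving-operator} but in the higher-regularity scale: set $\ZZ':=L^2(J,H^2)$ (or the appropriate space so that $A:\ZZ'\to L^2(J,H^4)\cap H^1(J,H^2)=\UU$ is the inhomogeneous-heat solution map), observe via the regularizing property of $M$ (bounded $H^k\to H_0^1\cap H^{k+2}$) that $G$ of~\eqref{Gwz} and hence $F(v,z)=v+G(Av,z)$ is again a polynomial, therefore analytic, map on the higher-regularity spaces, and that $D_vF(v,z)$ is invertible there by the same linearised energy estimate (now carried out in $H^2$ rather than $H^1$, using that $H^2$ is a Banach algebra, exactly as in Step~3 of Proposition~\ref{p-EU} and Step~3 of Proposition~\ref{p-dissipativity}). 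The analytic implicit function theorem then yields that $v$, and hence $S=Av+z$, depends analytically on $(S_0,\eta)\in H_0^1\times\XX$ with values in $\XX$; composing with the bounded linear trace-in-time map $S\mapsto S(1)$, which sends $\XX\to H^2(D)$ by~\eqref{sht} (the $H^\infty(J,H^2)$-type bound), gives analyticity of $S(1)$, and adding back $\bar T$ gives the claim for $\sS$.

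The main obstacle I anticipate is purely bookkeeping rather than conceptual: one must verify that every term in $F$ and in $D_vF$ maps the higher-regularity spaces into themselves with the required continuity — in particular that the bilinear term $\langle M(w+z),\nabla\rangle(w+z)$ lands in $L^2(J,H^2)$, which uses the gain of two derivatives from $M$ together with the algebra property of $H^2$, exactly the estimate already performed in Step~4 of Proposition~\ref{p-dissipativity}. There is also a minor point of consistency: the solution produced by the higher-regularity IFT must coincide with the one from Proposition~\ref{p-resolving-operator} (so that no spurious branch is introduced), which follows from the uniqueness statement in Proposition~\ref{p-EU}. Once these compatibilities are noted, the corollary follows, and in the write-up I would simply say it "follows from Proposition~\ref{p-resolving-operator} and its proof, together with the parabolic regularity estimate~\eqref{sht}," rather than repeat the whole argument.
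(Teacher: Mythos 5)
Your overall strategy is the same as the paper's: rerun the implicit-function-theorem argument of Proposition~\ref{p-resolving-operator} in a higher-regularity scale when $\eta\in\XX$, and then use linear parabolic regularity to evaluate at $t=1$. However, your final step contains a genuine error. The trace-in-time map $S\mapsto S(1)$ is \emph{not} bounded from $\XX=L^2(J,H^2)\cap H^1(J,L^2)$ into $H^2(D)$: the trace space of $\XX$ at a fixed time is the interpolation space $H^1(D)$, and this is sharp. For instance, $S(t,x)=\sum_k c_k e^{-\lambda_k(1-t)}e_k(x)$ (with $\{e_k\}$ the Dirichlet eigenfunctions of $-\Delta$ and $\lambda_k$ the eigenvalues) belongs to $\XX$ exactly when $\sum_k c_k^2\lambda_k<\infty$, i.e.\ when $S(1)\in H^1$, whereas $S(1)\in H^2$ requires the strictly stronger condition $\sum_k c_k^2\lambda_k^2<\infty$. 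Moreover, \eqref{sht} is an $L^\infty(J,H^1)\cap L^2(J,H^2)$ bound, not an $L^\infty(J,H^2)$ bound, so it cannot support the claimed mapping property. As written, your composition only reproduces analyticity into $H^1(D)$, i.e.\ Proposition~\ref{p-resolving-operator} itself.

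The repair is precisely what the paper does, and the ingredients are already in your plan: do not take the trace of $S$ as an element of $\XX$, but use the representation $T=\bar T+A(T_0-\bar T,\eta)+Av$ from \eqref{representation-T} and apply parabolic regularity to the heat solution operator $A$ term by term. Once the fixed-point argument places $v$ in the smoother space ($\XX^0$ in the paper; in your version $Av\in\UU$, whose time-$1$ trace lies in $H^3$), the map $v\mapsto (Av)(1)$ is linear and continuous into $H^2(D)$. For the linear part you must additionally check that $(S_0,\eta)\mapsto A(S_0,\eta)(1)$ is bounded from $H_0^1\times\XX$ into $H^2(D)$; this is exactly where the extra regularity $\eta\in\XX$ and the fact that the evaluation occurs at the positive time $t=1$ (parabolic smoothing away from $t=0$, where the datum is only $H^1$) enter — a point your write-up passes over entirely by hiding it in the false trace claim. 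With these two observations replacing that claim, your argument coincides with the paper's proof; your remarks on the algebra property of $H^2$, the regularizing property of $M$, and the consistency of the two IFT branches via the uniqueness in Proposition~\ref{p-EU} are all fine.
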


\begin{proof}
As was established in the proof of Proposition~\ref{p-resolving-operator}, the solution of~\eqref{boussinesq-reduced}, \eqref{boundary-condition}, \eqref{IC-temperature} can be written in the form 
\begin{equation} \label{representation-T}
T=\bar T+A(T_0-\bar T,\eta)+Av	
\end{equation}
where~$v\in L^2(J\times D)$ is the unique solution of~\eqref{wwe}.  Parabolic regularity combined with the same argument as
in the proof of Proposition~\ref{p-resolving-operator}  above shows that if $\eta \in \XX$,  then~\eqref{wwe} has a unique solution~$v\in\XX^0$,  which depends analytically on~$(T_0,\eta)\in \HH\times\XX$. 
It remains to note that the parabolic regularity yields that the restriction of~$Av$ to $t=1$ is a continuous linear operator from $H_0^1\times\XX$ to $H^2(D)$, and the required result follows immediately from~\eqref{representation-T}.
\end{proof}

\section{Main result}
\label{s-MR}

For fixed $c$ recall the definition of $H_c^k(D)$,  $E$, and $\HH$ in the notation section. 
In what follows, we often omit~$D$ from notation and write~$H^s$ for the Sobolev spaces on~$D$. 
Let us formulate the hypotheses imposed on the random perturbation~$\eta$.  We assume that $\eta$ has the form~\eqref{eta-form},  where~$\{\eta_k\}$ are i.i.d.\ random variables in~$E$ that satisfy the following hypothesis. 

\begin{itemize}
	\item [\hypertarget{(D)}{\bf(D)}]
	{\sl There is an orthonormal basis $\{\varphi_j\}_{j\ge1}$ in~$E$ such that 
	\begin{equation} \label{eta_k}
		\eta_k(t,x)=a\sum_{j=1}^\infty b_j\xi_j^k \varphi_j(t,x), 
	\end{equation}
	where $a>0$ is a parameter, $\{b_j\}$ are positive numbers satisfying the condition $\sum_jb_j^2<\infty$, and~$\{\xi_j^k\}_{j, k \geq 1}$ are independent random variables such that $|\xi_j^k|\le 1$ almost surely. Moreover, for any $j, k\ge1$, the law of~$\xi_j^k$ has a  density~$\rho_j\in C^1(\R)$ such that $\rho_j(0)>0$.}
\end{itemize}
Under the above regularity conditions on~$\eta_k$,  equations~\eqref{temperature}, \eqref{velocity-field} generate a random flow in~$\HH$. Moreover, the restrictions of the temperature function $T$ 
to integer times satisfy the relation
\begin{equation} \label{RDS}
	T_k=\sS(T_{k-1},\eta_k), \quad k\ge1,
\end{equation}
where $T_k=T(k,x)$, and~$\sS:\HH\times E\to \HH$ stands for the time-$1$ shift along trajectories of~\eqref{boussinesq-reduced}, with an initial condition specified at $t=0$. Since~$\{\eta_k\}$ are i.i.d.\ random variables, the family of all trajectories of~\eqref{RDS} form a Markov process in~$\HH$, which will be denoted by~$(T_k,\IP_T)$. Here,~$T_k$ stands for the trajectory of the process and~$\IP_T$ for the probability measure corresponding to the initial state $T\in \HH$. We denote by~$P_k(T,\Gamma)$ the transition function for~$(T_k,\IP_T)$ and write~$\PPPP_k$ and~$\PPPP_k^*$ for the corresponding Markov semigroups acting in the spaces~$C_b(\HH)$ and~$\PP(\HH)$, respectively. The following theorem describes the large-time asymptotics of trajectories for~\eqref{RDS} and is the main result of this paper.

\begin{theorem} \label{MT-mixing}
	There is a number $a^*>0$ such that, if 
	 Hypothesis~\hyperlink{(D)}{{\rm(D)}} with some $a\ge a^*$ is satisfied for the i.i.d.\ random variables~$\eta_k$ entering~\eqref{eta-form}, then the following assertions hold.
\begin{description}
	\item[\bf Uniqueness.] The Markov process~$(T_k,\IP_T)$ associated with~\eqref{temperature}, \eqref{velocity-field} has a unique stationary distribution $\mu \in\PP(H^2\cap\HH)$.
	\item[\bf Exponential mixing.]

	There are positive numbers $C$ and~$\gamma$ such that, for any $\lambda\in\PP(\HH)$, we have
	\begin{equation} \label{exponential-mixing}
		\|\PPPP_k^*\lambda-\mu\|_L^*
		\le C e^{-\gamma k}\bigl(1+\mmmm_1(\lambda)\bigr), \quad k\ge0,
	\end{equation}
where 
$$
\mmmm_1(\lambda) := \int_{\HH} \|u\|_{H^1} \lambda (\dd u)  
$$ 
is the mean value of the $H^1$ norm with respect to~$\lambda$, and~$\|\cdot\|_L^*$ denotes the dual-Lipschitz metric over the space~$H^1$. 
\end{description}
\end{theorem}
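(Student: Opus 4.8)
The plan is to deduce Theorem~\ref{MT-mixing} from the abstract exponential mixing criterion for random dynamical systems on Hilbert spaces of the form \eqref{RDS}, in the spirit of~\cite{shirikyan-asens2015,KNS-gafa2020}. That criterion requires three ingredients: (i) a global dissipativity/absorbing property ensuring that every trajectory enters a fixed compact set after a logarithmic number of steps; (ii) regularity (here analyticity) of the resolving operator $\sS$ jointly in the initial state and the noise; and (iii) an \emph{approximate controllability to a point} together with a \emph{Murat--Suslin/solid-controllability} type property for the linearised equation, which in practice is reduced to a unique continuation statement for the adjoint linearisation. Ingredient (i) is exactly Proposition~\ref{p-dissipativity} (which even gives the quantitative bound~\eqref{tau-bound} needed to control the return time against the first moment $\mmmm_1(\lambda)$, whence the factor $1+\mmmm_1(\lambda)$ in~\eqref{exponential-mixing}), and ingredient (ii) is Proposition~\ref{p-resolving-operator} and Corollary~\ref{c-analytic}. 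So the bulk of the work goes into (iii) and into checking that Hypothesis~\hyperlink{(D)}{(D)} supplies the structural nondegeneracy the criterion demands.

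Concretely, I would proceed as follows. First, fix the compact invariant (absorbing) set $\BBBB\subset H^3\cap\HH$ produced by Proposition~\ref{p-dissipativity}; by~\eqref{tau-bound} and a standard Foias--Prodi / coupling reduction it suffices to establish exponential mixing for initial data in $\BBBB$ and then transfer it to arbitrary $\lambda\in\PP(\HH)$ with the stated dependence on $\mmmm_1(\lambda)$. Second, verify the \textbf{approximate controllability} to a common point: one shows that, taking the noise in the large Hilbert cube contained in $\KK$ (guaranteed by the Amplitude hypothesis, i.e.\ $a\ge a^*$), the image $\sS(T_0,\cdot)$ of any $T_0\in\BBBB$ can be steered into an arbitrarily small neighbourhood of some fixed state $\hat T$ — here one can exploit the strong parabolic smoothing of~\eqref{boussinesq-reduced} and the fact that a large forcing concentrated near the bottom dominates the (bounded) nonlinear and buoyancy terms. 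Third, and this is the heart of the matter, establish the \textbf{solid controllability of the linearisation}: the derivative $D_\eta\sS(T_0,\eta)$, restricted to perturbations supported in $(0,1)\times D_c$ and expressed in the basis $\{\varphi_j\}$ of~$E$, must have dense (indeed "solidly large") range in $H^1$ around $\hat T$. By a duality/Hahn--Banach argument this is equivalent to: if $\psi$ solves the \emph{adjoint} linearised Boussinesq system backward in time and $\psi$ vanishes on the strip $(0,1)\times D_c$, then $\psi\equiv 0$. This is precisely the \emph{new unique continuation property} advertised in the introduction, and it will be proved in Section~\ref{s-carleman} via Carleman estimates adapted to the coupled elliptic--parabolic structure (the velocity $\uuu=M(T)$ being a nonlocal function of $T$ makes the adjoint equation nonlocal, so the Carleman weight and the absorption of the resulting nonlocal remainder are the delicate points).

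With (i)--(iii) in hand, the abstract theorem yields both conclusions: a unique stationary measure $\mu$, which — since trajectories enter $\BBBB\subset H^3\cap\HH$ — is supported in $H^2\cap\HH$ (in fact in $H^3\cap\HH$, but $H^2$ suffices for the statement and matches the regularity of the dual-Lipschitz metric used); and the exponential bound~\eqref{exponential-mixing}, with $C,\gamma$ depending only on $\Ra$, $|T_b|+|T_u|$, $c$, and the parameters in~\hyperlink{(D)}{(D)}, and with the linear-in-$\mmmm_1(\lambda)$ correction coming from the logarithmic hitting time~\eqref{tau-bound} for the absorbing set. Two technical checks remain: that Hypothesis~\hyperlink{(D)}{(D)} (product structure of $\ell$, $C^1$ densities $\rho_j$ with $\rho_j(0)>0$, the summability $\sum b_j^2<\infty$, and $|\xi_j^k|\le1$ a.s.) matches exactly the decomposability hypotheses of the mixing criterion, and that the Regularity and Localisation hypotheses guarantee $\KK\Subset E$ so that Corollary~\ref{c-analytic} applies with $\eta\in E\subset\XX$.

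\medskip
\noindent\emph{Expected main obstacle.} The serious difficulty is step three — the unique continuation property for the adjoint of the linearised system~\eqref{boussinesq-reduced}. Unlike classical UCP for parabolic equations, here the coefficient field $\uuu=M(T)$ is produced by a \emph{nonlocal} elliptic solve, so the adjoint operator contains a nonlocal term $M^*(\cdot)$ coupling the whole domain to the strip; a Carleman estimate with a weight degenerating away from $\{x_3<c\}$ will produce a nonlocal error term that cannot be absorbed by the usual large-parameter trick unless one exploits additional structure (e.g.\ that $M^*$ is smoothing and that the buoyancy coupling acts only through the vertical component $M_3$, together with the explicit product form of the linearised velocity-temperature coupling). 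Getting the weight, the observation region $(0,1)\times D_c$, and the handling of boundary terms at $x_3=0,1$ and at the horizontal period to all cooperate — and then upgrading the resulting qualitative UCP to the quantitative "solid controllability" needed by the abstract criterion — is where the real work lies; I would isolate it as a standalone proposition (the content of Section~\ref{s-carleman}) and treat the rest as a verification that the hypotheses of the mixing criterion hold.
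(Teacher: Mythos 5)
Your overall skeleton is the same as the paper's: reduce to the abstract criterion (Theorem~\ref{t-expomixing}) on the compact absorbing set furnished by Proposition~\ref{p-dissipativity}, get the regularity hypothesis from Corollary~\ref{c-analytic}, the noise structure from~\hyperlink{(D)}{(D)}, the factor $1+\mmmm_1(\lambda)$ from the logarithmic entry time~\eqref{tau-bound}, and reduce the density of the image of $D_\eta\sS$ by duality to a unique continuation property for the adjoint linearisation, proved via Fabre--Lebeau Carleman estimates (deferring that part, as the paper does, to Section~\ref{s-carleman}). The genuine gap is in your verification of the controllability hypothesis~\hyperlink{(H2)}{(H$_2$)}. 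The mechanism you propose --- that for $a\ge a^*$ the forcing ``dominates the (bounded) nonlinear and buoyancy terms'' and steers any initial state near a fixed $\widehat T$ --- does not work as stated: the admissible controls are supported in the thin layer $D_c$ near the bottom, so however large $a$ is they influence the bulk of $D$ only indirectly through the dynamics, and for large $\Ra$ the unforced dynamics has no globally stable equilibrium, so there is nothing for the noise to ``dominate'' away from $D_c$. There is also a circularity you must break: the absorbing set $X_a$ on which the initial condition lives itself grows with $a$, so one cannot tune $a$ to the size of the data without an intermediate step.

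The paper handles this quite differently (Section~\ref{s-global-control}): it first runs the system with $\eta\equiv0$ to enter an $H^3$-ball of radius independent of~$a$; it then chooses the target $\widehat T=\chi+\bar v(0)$, where $\chi=\chi(x_3)$ interpolates $T_b$ and $T_u$ inside the controlled layer and satisfies $M(\chi)=0$, and where $\bar v$ is the $1$-periodic solution of the auxiliary projected system~\eqref{v-equation}; Lemma~\ref{l-stability} shows that for a suitable finite-dimensional projection $\Pi_l$ this auxiliary system is globally exponentially stable, and its trajectory is precisely a controlled trajectory of~\eqref{control-reduced} with control $\eta=\Pi_l(M_3(v)\chi'-\chi'')$, which lies in the span of $\varphi_1,\dots,\varphi_l$ with norms bounded in terms of the data, hence in $\KK_a$ once $a$ is large --- this is where $a^*$ really enters. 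No such stabilisation/feedback construction (or workable substitute) appears in your plan, so (H$_2$) is not actually established. A second, more minor, omission: duality against controls supported in $J\times D_c$ naturally gives density of the attainable set in $L^2$, whereas (H$_3$) requires density in $H^1$; the paper bridges this via Proposition~\ref{p-UC} at an intermediate time $\tau<1$ combined with Lemma~\ref{l-density-homogeneous} and the $L^2\to H^1$ continuity of the linearised flow, a step your proposal glosses over.
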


Let us note that the velocity field~$\uuu$ also converges to a limiting measure, since it can be reconstructed from the temperature by the relation~$\uuu=M(T)$, and the corresponding map in the space of measures is Lipschitz-continuous in the metric $\|\cdot\|_L^*$. Furthermore, using the methods described in~\cite[Sections~3.1 and~3.2]{KS-book},  one can prove that~$\PPPP_k^*$ is a contraction in the space of measures with a finite moment, provided that $k\ge1$ is sufficiently large.  Since the proof of this fact does not involve any new ideas, we do not discuss the details. 

\smallskip
Let us describe the main idea of the proof of Theorem~\ref{MT-mixing}, postponing the details to the next two sections. We first remark that the discrete-time random dynamical system associated with~\eqref{boussinesq-reduced} possesses a compact absorbing set $X\subset \HH$ such that, for any $R\ge1$ and $T\in B_{H^1}(R)\cap\HH$, we have $\IP_T\{T_k\in X\}=1$ for $k\gtrsim \ln R$. This implies that it suffices to consider the restriction of the dynamics to~$X$.

According to Theorem~\ref{t-expomixing},  the uniqueness and exponential mixing in  Theorem~\ref{MT-mixing} follows  if we prove the following properties:
\begin{itemize}
	\item [\hypertarget{(ma)}{\bf(a)}]
	{\sl regularity and smoothing for the resolving operator of~\eqref{boussinesq-reduced}};
	\item [\hypertarget{(mb)}{\bf(b)}]
	{\sl approximate controllability to a point for the nonlinear problem and global approximate controllability for the linearised equation};
	\item [\hypertarget{(mc)}{\bf(c)}]
	{\sl structural property for the noise}. 
\end{itemize}
Property~\hyperlink{(ma)}{(a)} is a standard result in the theory of parabolic-type PDEs (see Section~\ref{s-resolving}), and~\hyperlink{(mc)}{(c)} is satisfied due to the hypotheses imposed on the noise (see~\eqref{eta_k}). Verification of~\hyperlink{(mb}{(b)} is more involved and contains the main difficulties. We prove below that there is a temperature profile~$\bar T(x_3)$ that is accessible from any point $T_0\in X$, provided that the parameter~$a\ge1$ in~\eqref{eta_k} is sufficiently large (see Section~\ref{s-global-control}). Furthermore, using Carleman-type estimates due to Fabre--Lebeau~\cite{FL-1996}, we establish a unique continuation property for the dual of the linearised problem, which implies the required approximate controllability (see Section~\ref{s-uniquecontinuation}).

\section{Proof of exponential mixing}
\label{s-ExpoMixing}

This section contains the proof of Theorem \ref{MT-mixing}.  First,  in Section~\ref{s-reduction}, we show that Theorem~\ref{MT-mixing} follows once we check the assumptions of Theorem~\ref{t-expomixing}.  Conditions~\hyperlink{(H1)}{(H$_1$)} and~\hyperlink{(H4)}{(H$_4$)} are easy to verify, and the validity of~\hyperlink{(H2)}{(H$_2$)} is shown in Section~\ref{s-global-control}.  In Section~\ref{s-density-derivative}, we prove that~\hyperlink{(H3)}{(H$_3$)} is equivalent to Proposition \ref{p-UC}, which is the main technical novelty of this paper and is established in Section~\ref{s-uniquecontinuation}. 

\subsection{Reduction to Theorem~\ref{t-expomixing}}
\label{s-reduction}
In this section, we assume that the hypotheses of Theorem~\ref{t-expomixing} are satisfied for the discrete-time Markov process~$(T_k,\IP_T)$ associated wit~\eqref{boussinesq-reduced} and prove Theorem~\ref{MT-mixing}. By  Hypothesis~\hyperlink{(D)}{{\rm(D)}} and orthonormality of the basis, the support~$\KK_a$ of the law of~$\eta_1$ is a compact subset of~$\XX^0$. In particular, for any $a>0$, there is $r(a)>0$ such that $\eta_k\in B_{\XX^0}(r(a))$ almost surely for all~$k\ge1$. Let $C_a:=C(r(a),r_0)$ be the number defined in Proposition~\ref{p-dissipativity} and, given $R>0$, let $N_a(R)$ be the least integer greater than $C_a\ln(R+2)$. Then 
\begin{equation}\label{absorbing-ball}
\IP_T\bigl\{T_k\in B_{H^3}(C_a)\mbox{ for }k\ge N_a(R)\bigr\}=1\quad\mbox{for any $T\in B_{H^1}(R)\cap\HH$}.
\end{equation}
Setting $k_a=N_a(C_a)$, we define 
\begin{equation} \label{set-Xa}
X_a=\bigcup_{l=0}^{k_a-1}\sS^l \bigl(B_{H^3\cap\HH}(C_a);\KK_a,\dots,\KK_a\bigr), 
\end{equation}
where the map $\sS^l:\HH\times E^l \to\HH$,  $l \geq 1$ is defined recursively by the rules
\begin{equation}\label{dfsl}
\sS^1=\sS, \quad \sS^l(T;\zeta_1,\dots,\zeta_l)=\sS^{l-1}\bigl(\sS(u,\zeta_1); \zeta_2,\dots,\zeta_l\bigr), \quad l\ge2.
\end{equation}
It follows from relation~\eqref{absorbing-ball} with $R=C_a$ that~$X_a$ is invariant and absorbing for the Markov process~$(T_k,\IP_T)$ in the sense that 
\begin{align}
	\IP_T\{T_k\in X_a\mbox{ for all $k\ge0$}\}&=1
	\quad\mbox{for any $T\in X_a$}, \label{invariance}\\
	\IP_T\bigl\{T_k\in X_a\mbox{ for all $k\ge N_a(R)$}\bigr\}&=1
	\quad\mbox{for any $T\in B_{H^1}(R)\cap\HH$}. \label{absorption}
\end{align}
Furthermore, the continuity properties of~$\sS$ and the compactness of the inclusions $\KK_a\subset\XX^0$ and $B_{H^3\cap\HH}(C_a)\subset H^2$ imply that~$X_a$  is a compact subset in the space $H^2\cap\HH$. Applying the Bogolyu\-bov--Krylov argument (e.g.  see~\cite[Section~3.1]{DZ1996}), we see that the Markov process~$(T_k,\IP_T)$ possesses at least one stationary measure~$\mu\in\PP(\HH)$. Moreover, it follows from~\eqref{absorption} that any stationary measure is supported in~$X_a$. 

\smallskip
We next show that if, for some $a>0$, Theorem~\ref{t-expomixing} is applicable to the restriction of the Markov process~$(T_k,\IP_T)$ to $X_a$, then~\eqref{exponential-mixing} holds, and hence~$\mu$ is the unique stationary measure. The definition of the Markov semigroup shows that, to prove~\eqref{exponential-mixing},  it suffices to establish the inequality 
\begin{equation} \label{expo-decay-bound}
	\|P_k(T,\cdot)-\mu\|_L^*\le Ce^{-\gamma k}(1+\|T\|_{H^1})
\end{equation}
for any $T\in \HH$ and $k\ge0$. Let $N_a=N_a(\|T\|)$ be the integer in~\eqref{absorption}. Then the measure $\lambda:=P_{N_a}(T,\cdot)$ is supported in~$X_a$, and~\eqref{mixing-dL} applies. Hence,  if $k\ge N_a$, then the Kolmogorov--Chapman relation implies that 
$$
\|P_k(T,\cdot)-\mu\|_L^*=\|\PPPP_{k-N_a}^*\lambda-\mu\|_L^*\le Ce^{-\gamma (k-N_a)}\le C_1e^{-\gamma k}(1+\|T\|_{H^1}),
$$
where we used the explicit form of~$N_a$ and assumed  that $\gamma = \gamma(a) > 0$ is so small that $\gamma C_a\le 1$. If $0\le k<N_a$,  then by the definition of the dual-Lipschitz norm we have 
\begin{equation*}
\|P_k(T,\cdot)-\mu\|_L^*\le 2 
\leq 2e^{-\gamma k} e^{\gamma N_a} 
\leq Ce^{-\gamma k}(1+\|T\|_{H^1}) \,,
\end{equation*}
where we again used the inequality $\gamma C_a\le 1$. 

\smallskip
Thus, to complete the proof of Theorem~\ref{MT-mixing}, it suffices to show that, for any sufficiently large~$a>0$, the hypotheses of Theorem~\ref{t-expomixing} are satisfied for the restriction of~$(T_k,\IP_T)$ to~$X_a$. Hypothesis~\hyperlink{(H1)}{(H$_1$)}, in which $H = H^1$,  $\HH$ is defined in Section~\ref{s0}, and $V=H^2(D)$, is a consequence of Corollary~\ref{c-analytic}. Hypothesis~\hyperlink{(H4)}{(H$_4$)}  is postulated in Condition~\hyperlink{(D)}{(D)} imposed on the noise~$\eta_k$.  Verification of \hyperlink{(H2)}{(H$_2$)} and~\hyperlink{(H3)}{(H$_3$)} is done in the next two subsections.

\subsection{Global approximate controllability to a point}
\label{s-global-control}
In this subsection,  we show that the problem is globally controllable to a single point, which is independent of the starting position.  Specifically,  we show that Hypothesis~\hyperlink{(H2)}{(H$_2$)} of Theorem~\ref{t-expomixing} holds true for $\sS^k$ defined in \eqref{dfsl},  and $\sS$ as in Theorem \ref{MT-mixing}.   We remark that problem \eqref{temperature}--\eqref{IC-temperature} with $\eta \equiv 0$ has complicated dynamics if~$\Ra$ is large and, in particular,  it does not have a globally stable equilibrium.  This suggests that if~$\eta$ is small, then the system might not be controllable. It is exactly at this point where we need a large parameter~$a>0$ in front of the sum in~\eqref{eta_k}. However, one needs to be careful, since the initial conditions~$T_0$ belongs to the set~$X_a$, which depends on~$a$. To avoid any circular reasoning, we first choose $\eta = 0$ and drive the solution to a set which is independent of~$a$. Then, we control the trajectory with an appropriately chosen~$\eta$. The proof is divided into three steps. 

\smallskip
{\it Step~1: Reduction to bounded initial conditions\/}.
Fix any $a > 0$ and let $R>0$ be so large that $X_a\subset B_{H^1}(R)$,  where $X_a$ was defined by \eqref{invariance} and \eqref{absorption}. 
Let~$C$ and~$\tau$ be as in Proposition~\ref{p-dissipativity} with $r=0$,  and note that $C$ is independent of $R$. 
Since $\eta\equiv0$ on~$[0,\tau]$ is admissible by  Hypothesis~\hyperlink{(D)}{{\rm(D)}}, the solution of~\eqref{boussinesq-reduced} with $T(0) = T_0\in X_a$ satisfies the inequality $\|T(\tau)\|_{H^3}\le C$.  Hence,  to prove the validity of~\hyperlink{(H2)}{(H$_2$)},  it suffices to assume $T_0\in X_a$ with
\begin{equation}\label{IC-bounded}
	\|T_0\|_{H^3}\le N, 
\end{equation}
where $N>0$ is a fixed number independent of~$R$ and~$a$.

\smallskip
{\it Step~2: Construction of the target function\/}.  Let $[\e_1,\e_2]\subset (0,c)$ be an interval and let $\chi\in C^\infty(\R)$ be a function such that $\chi(x_3)=T_b$ for $x_3\le\e_1$ and $\chi(x_3)=T_u$ for $x_3\ge\e_2$.  Since 
$$
\chi(x_3)\et=\nabla\biggl(\int_0^{x_3}\chi(s)\,\dd s\biggr) =: \nabla p,
$$
we have $M(\chi)=0$,  where $M$ was defined by~\eqref{velocity-field}. Let us write a solution of~\eqref{boussinesq-reduced} in the form $T=\chi+S$ and note that $M(T) = M(S) + M(\chi) = M(S)$.  Then, $S$~satisfies 
\begin{equation}\label{control-reduced}
	\p_tS-\Delta S+\langle M(S),\nabla\rangle S+M_3(S)\chi'(x_3)=\eta+\chi''(x_3), 
\end{equation}
where $M_3(S)$ stands for the third component of $M(S)$. Furthermore, 
\begin{equation}\label{S-IBV}
	S\bigr|_{\p D}=0, \quad \quad S(0)=S_0:=T_0-\chi. 
\end{equation}
Let $F$ be the space of functions $\zeta\in L_{\rm{loc}}^2(\R_+\times D)$  such that, for any integer $k\ge1$,  the restriction of the vector function $\zeta(k-1+\cdot)$ to~$J$, denoted~$\zeta_k$, belongs to~$E$.  For any integer~$l\ge1$, we define a linear operator $\Pi_l:F\to F$ as follows:
$$
(\Pi_l\zeta)(t,x)=\sum_{k=1}^\infty I_{[k-1,k)}(t)({\mathsf P}_l\zeta_k)(t-k+1,x),
$$
where ${\mathsf P}_l:E\to E$ is the orthogonal projection to the vector span of $\{\varphi_1,\dots,\varphi_l\}$, with $\{\varphi_j\}_{j \geq 1}$ defined in 
\hyperlink{(D)}{{\rm(D)}}. 

Together with~\eqref{control-reduced}, let us consider the problem
\begin{align}
	\p_tv-\Delta v+\langle M(v),\nabla\rangle v+(I-\Pi_l)(M_3(v)\chi')&=(I-\Pi_l)\chi'', \label{v-equation}\\
	v\bigr|_{\p D}=0, \quad v(0)&=v_0.\label{v-IBV}
\end{align}
The proof of the following result is based on a standard contraction mapping argument and is given in Section~\ref{s-stability}. 

\begin{lemma}\label{l-stability}
For any $r_0>0$, there is an integer $l\ge1$ and numbers~$r\ge r_0$, $C>0$, and $\gamma>0$ such that the following properties hold.
	\begin{description}
	\item[Well-posedness.] 
	For any initial condition $v_0\in H_0^1(D)$ satisfying the inequality $\|v_0\|_{H^1}\le r_0$, there is a unique continuous function $v:\R_+\to H_0^1(D)$ satisfying Eqs.~\eqref{v-equation} and~\eqref{v-IBV} in the sense of distributions such that the restriction~$v_k$ of $v(k-1+\cdot)$ to~$J$ belongs to~$\XX^0$ for any $k\ge1$, and 
	\begin{equation} \label{k-bound}
	\|v_k\|_{\XX}\le r, \quad k\ge1. 
	\end{equation}
	\item [Stability.] If $v_0^i\in H_0^1(D)$, $i=1,2$ are two initial conditions satisfying the inequalities $\|v_0^i\|_{H^1}\le r_0$, and $v^i(t,x)$ are the corresponding solutions, then 
\begin{equation}\label{stability-inequality}
	\|v^1(t)-v^2(t)\|_{H^1}\le Ce^{-\gamma t}\|v_0^1-v_0^2\|_{H^1}, \quad t\ge0. 
\end{equation}
	\item [Periodic solution.] Equation~\eqref{v-equation} has a unique continuous $1$-periodic solution $\bar v:\R_+\to H_0^1(D)$ such that, for any $k\ge1$,  the restriction~$\bar v_k$ of $\bar v(k-1+\cdot)$ to~$J$ belongs to~$\XX^0$ and satisfies the inequality $\|\bar v_k\|_\XX\le \frac12r_0$ . 
	\end{description} 
\end{lemma}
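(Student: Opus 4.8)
The plan is to treat \eqref{v-equation} as a semilinear parabolic equation whose linear part is the heat operator $\p_t-\Delta$ on $H_0^1(D)$, and whose nonlinear part is built from the smoothing operator $M$ exactly as in Proposition~\ref{p-EU}. The key structural observation is that the truncation $I-\Pi_l$ removes all but a tail of the expansion of the ``drift'' terms $M_3(v)\chi'$ and $\chi''$; since $\{\varphi_j\}$ is an orthonormal basis of $E$ and $M$ has the regularizing property, the norm of $(I-\Pi_l)$ applied to any fixed $\XX^0$-bounded expression tends to $0$ in the appropriate $L^2(J,\cdot)$-topology as $l\to\infty$. Thus by choosing $l$ large we can make the forcing $(I-\Pi_l)\chi''$ and the lower-order perturbation $(I-\Pi_l)(M_3(v)\chi')$ arbitrarily small; this is what buys us the smallness needed for global-in-time well-posedness and exponential stability despite the (possibly large) Rayleigh number.

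First I would establish \textbf{well-posedness} on a single interval $J=[0,1]$. Writing $v=Aw+z$ with $z=A(v_0,(I-\Pi_l)\chi'')$ the solution of the inhomogeneous heat equation, one recasts \eqref{v-equation}, exactly as in the proof of Proposition~\ref{p-resolving-operator}, as a fixed-point equation $w=\Phi(w)$ in a ball of $\XX^0$; the regularizing properties of $M$ and $A$ together with Gronwall give a unique solution $v_1\in\XX^0$, with $\|v_1\|_\XX$ controlled by $\|v_0\|_{H^1}$ and by $\|(I-\Pi_l)\chi''\|_{L^2(J,H^1)}$. The energy estimates of Proposition~\ref{p-EU} (testing with $v$ and with $\Delta v$) give an a priori bound of the schematic form $\p_t\|v\|^2+\|\nabla v\|^2\le -\lambda_1\|v\|^2+C\varepsilon_l^2+C\|v\|^{10}$ on each interval, where $\varepsilon_l:=\|(I-\Pi_l)\chi''\|$ and the $\|v\|^{10}$ term comes from the advection bound in Step~3 of Proposition~\ref{p-EU}; since $\chi$ is fixed and smooth, for $l$ large $\varepsilon_l$ is as small as we like, and a continuation-of-small-solutions argument produces a global continuous $v:\R_+\to H_0^1$ with $v_k\in\XX^0$ and the uniform bound \eqref{k-bound} for a suitable $r\ge r_0$. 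Here one should be slightly careful that $r$ may exceed $r_0$ (the ball the solution stays in is larger than the ball the data lives in); this is why the statement separates the two radii.

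Next, \textbf{stability}: for two solutions the difference $\delta=v^1-v^2$ solves a linear nonautonomous equation $\p_t\delta-\Delta\delta+\langle M(\delta),\nabla\rangle v^1+\langle M(v^2),\nabla\rangle\delta+(I-\Pi_l)(M_3(\delta)\chi')=0$ with $\delta(0)=v_0^1-v_0^2$. Testing with $\delta$, using $\langle\langle M(v^2),\nabla\rangle\delta,\delta\rangle=0$, the regularizing bound $\|M(\delta)\|_{H^1}\le C\|\delta\|$, Sobolev embedding, interpolation, and \eqref{k-bound} to control $\|v^1\|_{\XX}$, one gets $\p_t\|\delta\|^2\le(-2\lambda_1+C\|v^1\|_{H^2}^{2}+C\varepsilon_l+C)\|\delta\|^2$; integrating the time-integrable quantity $\|v^1\|_{H^2}^2$ over unit intervals (bounded by $r^2$ via \eqref{k-bound}) and taking $l$ large so that the remaining constant beats $2\lambda_1$ yields \eqref{stability-inequality} with some $\gamma>0$. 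The subtlety is that one cannot simply absorb $C\|\delta\|^2$ into the dissipation for arbitrary $\Ra$; the mechanism that saves us is again the $(I-\Pi_l)$ truncation applied to the zeroth-order term $M_3(\delta)\chi'$, which is made small, combined with the fact that the genuinely nonlinear interaction $\langle M(\delta),\nabla\rangle v^1$ is multiplied by the small quantity $\|v^1\|_{\XX}$-bound only in the high modes — this is the point that needs the most care and is the main obstacle.

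Finally, the \textbf{periodic solution}: since the ``time-$1$ map'' $v_0\mapsto v_1(1)$ is, by the above, a contraction on the ball $B_{H^1}(r_0)$ with contraction constant $Ce^{-\gamma}<1$ (after possibly shrinking the forcing further, i.e.\ enlarging $l$, so that the map sends this ball into itself — note $\chi''$ is fixed so $\|z(1)\|_{H^1}$ can be made $\le\frac12 r_0$), Banach's fixed-point theorem gives a unique fixed point $\bar v_0$, hence a unique $1$-periodic solution $\bar v$; the bound $\|\bar v_k\|_\XX\le\frac12 r_0$ follows from the a priori estimate applied to this particular small-data solution, using once more that $\varepsilon_l$ is negligible. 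Strictly, one should fix $l$ once and for all at the end so that all three smallness requirements (global existence, $\gamma>0$ in stability, self-map for the Poincaré map with the $\frac12 r_0$ bound) hold simultaneously; since each requires only ``$l$ large enough,'' their conjunction does too.
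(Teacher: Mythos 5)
Your high-level mechanism is the same as the paper's (shift the destabilising terms $\chi''$ and $M_3(v)\chi'$ into a right-hand side killed by $I-\Pi_l$, then exploit dissipativity), but the execution of the \textbf{Stability} step has a genuine gap. The differential inequality you propose, $\p_t\|\delta\|^2\le\bigl(-2\lambda_1+C\|v^1\|_{H^2}^2+C\varepsilon_l+C\bigr)\|\delta\|^2$, cannot be closed by ``taking $l$ large'': neither the bare constant $C$ (coming from $\langle M(\delta),\nabla\rangle v^1$ and from $M_3(\delta)\chi'$, and proportional to $\Ra$ and $\|\chi'\|_{L^\infty}$) nor $\|v^1\|_{H^2}^2$, which is of size $r^2$ per unit interval by \eqref{k-bound}, becomes small as $l\to\infty$; and the rescue you sketch --- that $\langle M(\delta),\nabla\rangle v^1$ is ``multiplied by a small bound only in the high modes'' --- is not a real mechanism, since neither $\delta$ nor $v^1$ is spectrally localised. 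The missing idea (and the one the paper uses) is that the smallness bought by $I-\Pi_l$ acts on the \emph{solutions themselves}: viewing \eqref{v-equation} as \eqref{v-boussinesq} with right-hand side $g=(I-\Pi_l)(\chi''-M_3(v)\chi')$, the advection term is skew-symmetric and no zeroth-order destabilising term remains, so the $L^2$ energy balance gives unconditional exponential decay up to the small forcing \eqref{g-small}; hence every solution enters a ball of radius $O(\delta)$ after a transient of length $O(\log r_0)$, and it is only there that the dangerous coupling $\langle M(\delta),\nabla\rangle v^1$ can be absorbed (its coefficient is now the small norm of $v^1$), the transient being swallowed by the constant $C$ in \eqref{stability-inequality}, as in property \eqref{v1-v2}. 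Without this ``solutions become eventually small'' step there is no way to produce $\gamma>0$ for large $r_0$ or large Rayleigh number.

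A second structural problem is that you use the truncation as if it produced pointwise-in-time small coefficients ($C\varepsilon_l$ inside a differential inequality). But $\Pi_l$ projects in the space $E$ of space--time functions on the unit interval $J$, so $I-\Pi_l$ is non-local in time: its smallness on the image of $v\mapsto M_3(v)\chi'$ is available only in integrated norms over unit intervals, in the form $\sup_k\|g\|_{L^2(J\times D)}\le\delta_l\sup_k\|v\|_{\XX}$ with $\delta_l\to0$; moreover $g$ depends on the unknown $v$, so the uniform bound \eqref{k-bound} cannot be obtained by a Gronwall continuation (and your $L^2$ inequality should not contain a $\|v\|^{10}$ term --- that term lives at the $H^1$ level, where it is controlled by the $L^2$ bound; also the data of size $r_0$ is not small, so ``continuation of small solutions'' does not apply). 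The paper resolves this circularity by a Leray--Schauder fixed point on each interval $[0,n]$ within the set where the interval-wise $L^2$ norms are $\le r$, with $l=l(r)$ chosen afterwards. Finally, in the periodic-solution step, $Ce^{-\gamma}$ need not be $<1$, so the time-$1$ map is not directly a contraction; one should contract the time-$n$ map $\RR_n$ for $n$ large and then recover $1$-periodicity, for instance by observing that the unique fixed points of $\RR_n$ and $\RR_{n+1}$ coincide, before deducing the bound $\|\bar v_k\|_{\XX}\le\tfrac12 r_0$ by letting $l\to\infty$.
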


We prove that Hypothesis~\hyperlink{(H2)}{(H$_2$)} of Theorem~\ref{t-expomixing} holds with the function $\widehat T:=\chi+\bar v(0)\in\HH$, where $\bar v$ is the periodic solution constructed in Lemma~\ref{l-stability} for $r_0=N+\|\chi\|_{H^1}$ (see \eqref{IC-bounded} for the definition of $N$).  To this end, we show that if $a>0$ is sufficiently large, then there is an integer~$n\ge1$ (depending on~$N$) such that the following property holds:
\begin{itemize}
	\item [\hypertarget{(C)}{\bf(C)}]
\sl 
For any $T_0\in X_a$ satisfying~\eqref{IC-bounded}, there are vectors $\zeta_1,\dots,\zeta_n\in\KK_a$ such that
\begin{align}
	\|\sS^n(T_0;\zeta_1,\dots,\zeta_n)-\widehat T\|_{H^1}
	&\le \frac{1}{12}\,\|T_0-\widehat T\|_{H^1},\label{contraction}\\
	\|\sS^n(T_0;\zeta_1,\dots,\zeta_n)\|_{H^1}&\le N. \label{N-bound}
\end{align} 
\end{itemize}
Once~\eqref{contraction} and~\eqref{N-bound} are proved, the validity of Hypothesis~\hyperlink{(H2)}{(H$_2$)} follows by an iteration.  

Note that if inequality~\eqref{contraction} holds,  then~\eqref{N-bound} is also satisfied, provided that $N\ge 5\|\chi\|_{H^1}$. Indeed, in view of the choice of~$\widehat T$ and~$r_0$, we can write
\begin{align*}
\|\sS^n(T_0;\zeta_1,\dots,\zeta_n)\|_{H^1}&\le \|\sS^n(T_0;\zeta_1,\dots,\zeta_n)-\widehat T\|_{H^1}+\|\widehat T\|_{H^1}\\
&\le\frac{1}{12}\,\|T_0-\widehat T\|_{H^1}+\|\widehat T\|_{H^1}\\
&\le \frac{1}{12}\,\|T_0\|_{H^1}
+\frac{13}{12}\bigl(\|\chi\|_{H^1}+\|\bar v(0)\|_{H^1}\bigr)\\
&\le \frac{1}{12}N+\frac{13}{12}\|\chi\|_{H^1}+\frac{13}{24}r_0
=\frac{15}{24}N+\frac{3}{8}\|\chi\|_{H^1}.
\end{align*}
Assuming without loss of generality that $N\ge 5\|\chi\|_{H^1}$, we see that the  right-most term of this inequality is smaller than~$N$. We thus confine ourselves to the proof of~\eqref{contraction}.

\smallskip
{\it Step~3: Proof of Property~\hyperlink{(C)}{\rm(C)}\/}. Let us fix any $T_0\in X_a$ satisfying~\eqref{IC-bounded} and denote by $v$ the solution of problem~\eqref{v-equation}, \eqref{v-IBV} with $v_0=T_0-\chi$.  The existence and uniqueness of $v$ follows from Lemma~\ref{l-stability} for $r_0=N+\|\chi\|_{H^1} \geq \|T_0 - \chi\|_{H^1}$. 
In view of~\eqref{stability-inequality}, we have
$$
\|v(t)-\bar v(t)\|_{H^1}\le Ce^{-\gamma t}\|v_0-\bar v(0)\|_{H^1}
=Ce^{-\gamma t}\|T_0-\widehat T\|_{H^1}. 
$$
where $\bar v$ stands for the $1$-periodic solution of~\eqref{v-equation} constructed in Lemma~\ref{l-stability}.  If~$t$ is a sufficiently large integer $n\ge1$ (which depends only on~$N$),  then
\begin{equation}\label{T-contraction}
	\|v(n)-\bar v(n)\|_{H^1}
	=\|v(n)+\chi-\widehat T\|_{H^1} 
	\le\frac{1}{12}\|T_0-\widehat T\|_{H^1}. 
\end{equation}
Also,  note that~\eqref{v-equation} can be written as 
$$
\p_tv-\Delta v+\langle M(v),\nabla\rangle v+M_3(v)\chi'=\Pi_l(M_3(v)\chi'-\chi'')+\chi''. 
$$
Since $v(0)=T_0-\chi$,  then~$v$ is the solution of the problem~\eqref{control-reduced}, \eqref{S-IBV} with $\eta=\Pi_l(M_3(v)\chi'-\chi'')$,  and consequently  $S = v$. It follows from~\eqref{k-bound} that the restrictions~$\zeta_k$ of the function $\eta(k-1+\cdot)$ to~$J$ belong to $\lspan\{\varphi_1,\dots,\varphi_l\}$ and have bounded norms depending only on~$N$. Also,  for any ball $B\subset \lspan\{\varphi_1,\dots,\varphi_l\}$ there is $a>0$ such that $B\subset \KK_a$,  and therefore the functions $\zeta_1,\dots,\zeta_n$ belong to~$\KK_a$ for sufficiently large~$a$

We thus conclude that $\sS^n(T_0;\zeta_1,\dots,\zeta_n)=v(n)+\chi$, and the required inequality~\eqref{contraction} follows from~\eqref{T-contraction}. This completes the proof of~\hyperlink{(C)}{(C)}.

\subsection{Density of the image of the derivative}
\label{s-density-derivative}

In this section, we formulate a sufficient condition (see Proposition~\ref{p-UC}),  for Hypothesis~\hyperlink{(H3)}{(H$_3$)} of Theorem~\ref{t-expomixing} to be satisfied. The proof of Proposition~\ref{p-UC} is given in Section~\ref{s-uniquecontinuation}. In what follows, the parameter~$a > 0$ is arbitrary and fixed,  and we write~$X$ instead of~$X_a$.  Our goal is  to prove that for any $T_0\in X$ and $\eta\in E$,  the image of the space $E$ under the mapping $(D_\eta \sS)(T_0,\eta)$ is dense in~$H_0^1(D)$.  Recall that for fixed $\zeta\in E$,  we have $(D_\eta\sS)(T_0,\eta)\zeta= \theta(1)$,   where~$\theta$ is the solution of the linearization of  \eqref{boussinesq-reduced}, 
\begin{equation} \label{linearised-E} 
	\p_t\theta- \Delta\theta+\langle M(T),\nabla \rangle \theta+\langle M(\theta),\nabla\rangle T=\zeta,
\end{equation}
supplemented with the zero initial and boundary conditions:
\begin{align} \label{linearised-IBC}
\theta\bigr|_{\p D}=0, \quad \theta(0)=0.
\end{align}
Here, $T$ is the solution of~\eqref{boussinesq-reduced} with $T(0)= T_0$. For any $\tau\in(0,1)$, we define the vector space 
$$
\Theta_\tau=\{\theta(\tau):\mbox{$\theta$ satisfies~\eqref{linearised-E}, \eqref{linearised-IBC} with some $\zeta\in E$ that vanish on~$[\tau,1]$}\}
$$
in~$H_0^1(D)$, and denote $\Theta:=\Theta_1$. To prove that~$\Theta$ is dense in~$H_0^1(D)$,  we first establish the density in~$L^2(D)$. 

\begin{proposition} \label{p-UC}
For any $T_0\in X$, $\eta\in E$, and $\tau\in(0,1)$, the vector space~$\Theta_\tau$  is dense in~$L^2(D)$. 
\end{proposition}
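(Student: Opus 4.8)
The plan is to prove Proposition~\ref{p-UC} by a duality argument reducing density of $\Theta_\tau$ in $L^2(D)$ to a unique continuation property for the adjoint of the linearised operator. First I would fix $T_0\in X$, $\eta\in E$, $\tau\in(0,1)$, and let $T$ be the corresponding solution of~\eqref{boussinesq-reduced}. Since $\Theta_\tau$ is a linear subspace of $L^2(D)$, it fails to be dense precisely when there is a nonzero $q\in L^2(D)$ orthogonal to $\theta(\tau)$ for every solution $\theta$ of~\eqref{linearised-E}, \eqref{linearised-IBC} driven by some $\zeta\in E$ vanishing on $[\tau,1]$. The standard way to exploit such orthogonality is to introduce the solution $p$ of the backward adjoint problem on $[0,\tau]$:
\begin{equation*}
-\p_t p-\Delta p-\langle M(T),\nabla\rangle p + M^*\bigl((\nabla T)\,p\bigr)=0,\qquad p\bigr|_{\p D}=0,\quad p(\tau)=q,
\end{equation*}
where $M^*$ is the $L^2$-adjoint of $M$ (well defined and regularising, since $M$ maps into $H^1_0\cap H^2$ divergence-free fields). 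Pairing this equation against $\theta$ and integrating by parts over $[0,\tau]\times D$, the bilinear boundary and coupling terms cancel, leaving $\langle q,\theta(\tau)\rangle = \int_0^\tau\langle \zeta(s), p(s)\rangle\,\dd s$. Hence orthogonality of $q$ to all of $\Theta_\tau$ forces $\int_0^\tau\langle\zeta(s),p(s)\rangle\,\dd s=0$ for every $\zeta\in E$ supported in $[0,\tau]\times D_c$; since such $\zeta$ range over a dense subset of $L^2([0,\tau],H^2_c(D))$, this means $p(t,x)=0$ for a.e.\ $(t,x)\in[0,\tau]\times D_c$.

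The crux is then to upgrade ``$p$ vanishes on the thin strip $(0,\tau)\times D_c$'' to ``$p\equiv0$'', and in particular $q=p(\tau)=0$, contradicting $q\neq0$. This is exactly a unique continuation statement for the adjoint parabolic system, and it is where the Carleman/unique-continuation machinery of Fabre--Lebeau~\cite{FL-1996} enters (cf.\ the discussion at the end of Section~\ref{s-MR}). The key steps I would carry out are: (i) observe that the adjoint equation for $p$ is, after absorbing the nonlocal term $M^*((\nabla T)p)$, a parabolic equation with a zeroth-order nonlocal perturbation whose kernel is smooth in $x$ because of the regularising property of $M$ (and $M^*$) together with the $H^3$-regularity of $T$ guaranteed by Proposition~\ref{p-dissipativity}; (ii) on the sub-cylinder where $p$ vanishes, the nonlocal term does not automatically vanish, so one must treat it carefully --- the natural route is to differentiate in $x_3$, or to use that $\langle M(\theta),\nabla\rangle T$ has a very specific structure (it only involves $M(\theta)$, the Stokes solution with body force $\Ra\,\theta\,\et$), so that the pairing $M^*((\nabla T)p)$ against test functions reduces, via the Stokes equation, to an expression that is local in a useful sense; (iii) apply the Carleman estimate along the $x_3$-direction, propagating the vanishing of $p$ from the slab $\{0<x_3<c\}$ across $\{x_3=c\}$ into all of $D$, and then a standard backward-uniqueness / parabolic unique continuation in time to conclude $p(\tau)=0$.

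I would organise the write-up as follows. After the duality reduction above, state a clean unique continuation lemma for the pair $(\theta,p)$ (or directly for $p$ solving the adjoint system) asserting that if $p$ vanishes on $(0,\tau)\times D_c$ then $p\equiv0$ on $(0,\tau)\times D$; this lemma is the substantive content and its proof occupies Section~\ref{s-carleman}/\ref{s-uniquecontinuation} --- here I would only invoke it. Then $q=p(\tau)=0$ gives the contradiction, so no nonzero $q$ can be orthogonal to $\Theta_\tau$, i.e.\ $\Theta_\tau$ is dense in $L^2(D)$. I expect the main obstacle to be handling the nonlocal coupling term $\langle M(\theta),\nabla\rangle T$ (equivalently $M^*((\nabla T)\,\cdot\,)$ in the adjoint): unlike a purely local parabolic operator, one cannot directly quote a textbook unique continuation theorem, and one has to exploit the specific elliptic-to-parabolic coupling of the infinite-Prandtl Boussinesq system --- in particular the fact that $M$ is determined by a single scalar $\theta$ via Stokes --- to show that the Carleman weight still controls the extra term; secondarily, the limited regularity of the coefficients (only $T\in H^3$, hence $\nabla T\in H^2\hookrightarrow L^\infty$ in 3D but not better) means the Carleman estimate must be the ``rough coefficient'' version of Fabre--Lebeau rather than a smooth-coefficient one.
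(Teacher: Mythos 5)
Your duality reduction coincides with the paper's: one pairs the linearised equation with the backward adjoint problem~\eqref{dual-linearised} (your equation for $p$ is the same, since $\diver(pM(T))=\langle M(T),\nabla\rangle p$), and orthogonality of $q$ to $\Theta_\tau$ forces the adjoint solution to vanish on $(0,\tau)\times D_c$ because the admissible controls are dense in $L^2$ of the strip. The difficulty is what you do next: the unique continuation statement that you propose to ``only invoke'' is precisely the content of this proposition --- its proof occupies all of Section~\ref{s-uniquecontinuation} --- so as written your argument leaves the substantive part unproven, and the sketch you give of it (steps (i)--(iii)) misses the structural moves that actually make the Carleman argument close for this nonlocal problem.

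Concretely, the paper does not treat $M^*\bigl((\nabla T)p\bigr)$ as an abstract nonlocal zeroth-order term; it rewrites the adjoint equation as the coupled parabolic--elliptic system~\eqref{dual-linearised-temperature}, in which the nonlocal term is simply $w_3$, with $(\www,q)$ solving a Stokes-type system whose source $\Ra\,\psi\,\nabla T$ is local in $\psi$. The decisive intermediate step, absent from your sketch, is that vanishing of $\psi$ on $J\times D_c$ first gives $w_3=0$ there (from the parabolic equation), and then a Fourier analysis of the Stokes system on the slab $\T^2\times(0,c)$ (harmonicity of $\p_3w_1,\p_3w_2$, the no-slip condition at $x_3=0$) identifies the horizontal part as the explicit harmonic field~\eqref{w-linear-inx}, which can be subtracted so that one may assume $\www=0$ and $q=0$ on $J\times D_c$. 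This is indispensable: in the concluding step of the Carleman argument the cutoff error terms $F$ and $\GGG$ contain $\www$ and $q$, and the comparison of the weight $\varphi$ on $\Sigma$ only yields the conclusion because these fields, not just $\psi$, vanish below the surface being crossed. With that in hand, the proof combines the parabolic estimate of Proposition~\ref{p-carleman-heat} with \emph{two} applications of the elliptic estimate of Proposition~\ref{p-carleman-elliptic} (for $(\yyy,f)$ with $L=A$ and for the pressure with $L=\diver$), plus a sweeping argument in $\lambda$ to propagate vanishing across $\{x_3=c\}$. Two smaller corrections: no ``rough coefficient'' version of Fabre--Lebeau is needed, since the Carleman inequalities are applied to the constant-coefficient principal parts and $T,\uuu\in H^3$ enter only through $L^\infty$ bounds on lower-order terms; and no backward uniqueness is needed at the end --- once $\psi\equiv0$ on the open cylinder, continuity in time already gives $q=0$.
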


Taking this result for granted (see Section \ref{s-uniquecontinuation} for a proof),  we establish the density of~$\Theta$ is~$H_0^1(D)$ with help of the following lemma,  which is proved in  Appendix~\ref{a-lemma43}. Note that, in Lemma~\ref{l-density-homogeneous}, we assume zero forcing and vary the initial conditions. 

\begin{lemma} \label{l-density-homogeneous}
For any $T_0\in X$, $\eta\in E$, and $\tau\in(0,1)$, let~$\LL\subset\XX$ be the vector space of solutions for~\eqref{linearised-E} with $\zeta = 0$.  Then the space $\LL_1:=\{\theta(1):\theta\in\LL\}$ is dense in~$H_0^1(D)$.
\end{lemma}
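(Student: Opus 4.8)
The plan is to prove the lemma by a duality argument that reduces the density of $\LL_1$ to the injectivity of the adjoint of the homogeneous linearised flow, which in turn amounts to a \emph{backward uniqueness} property for the corresponding parabolic equation. Write $U:=U(1,0)$ for the time-$1$ resolving operator of the homogeneous problem \eqref{linearised-E} with $\zeta=0$, so that $\LL_1=U(H_0^1)$. By parabolic smoothing over the unit time interval, $U$ extends to a bounded operator $U:L^2(D)\to H^2\cap H_0^1$; since $H_0^1$ is dense in $L^2$ and $U$ is $L^2$-to-$H_0^1$ continuous, one has $\overline{U(H_0^1)}^{\,H_0^1}=\overline{U(L^2)}^{\,H_0^1}$, and it suffices to show that $U(L^2)$ is dense in $H_0^1$. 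Equipping $H_0^1$ with the inner product $(\nabla\cdot,\nabla\cdot)$ and using that $(-\Delta)^{1/2}$ is an isometric isomorphism $H_0^1\to L^2$, the required density is equivalent to the operator $(-\Delta)^{1/2}U:L^2\to L^2$ having dense range, i.e.\ to the injectivity of its adjoint $U^*(-\Delta)^{1/2}$, where $U^*$ is the $L^2$-adjoint of $U$, extended to the dual space $H^{-1}$ via the continuity of $U:L^2\to H_0^1$.

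First I would identify this adjoint explicitly. Pairing the linearised operator in \eqref{linearised-E} with a test function $\psi$ vanishing on $\p D$ and integrating by parts, the principal part $-\Delta$ is self-adjoint; the transport term $\langle M(T),\nabla\rangle\theta$ produces $-\langle M(T),\nabla\rangle\psi$ and is skew-symmetric in $L^2$, since $M(T)$ is divergence free and vanishes on $\p D$; and the nonlocal term $\langle M(\theta),\nabla\rangle T$ contributes, after pairing with $\psi$, the expression $\langle M(\theta),\psi\,\nabla T\rangle$. Using that $M$ is, up to the Leray projection, the self-adjoint inverse of the Stokes operator, this equals $\langle\theta,(M^\sharp(\psi\nabla T))_3\rangle$, where $M^\sharp$ denotes the Stokes solve with vector-valued right-hand side; consequently the adjoint of $\theta\mapsto\langle M(\theta),\nabla\rangle T$ is the map $\psi\mapsto(M^\sharp(\psi\nabla T))_3$, which is bounded from $L^2(D)$ into $H^2(D)$ by the regularizing property of the Stokes solve together with $T(t)\in H^3$. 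Thus $U^*f=\psi(0)$, where $\psi$ solves on $[0,1]$, backward in time, the formal adjoint equation
\begin{equation*}
	-\p_t\psi-\Delta\psi-\langle M(T),\nabla\rangle\psi+\bigl(M^\sharp(\psi\nabla T)\bigr)_3=0,\qquad \psi\bigr|_{\p D}=0,\qquad \psi(1)=f.
\end{equation*}

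The key step is a backward uniqueness property: if $\psi$ solves this adjoint equation and $\psi(0)=0$, then $\psi\equiv0$, whence $f=\psi(1)=0$. After the change of time $s=1-t$ the equation becomes a well-posed forward parabolic equation $\p_s\tilde\psi-\Delta\tilde\psi-\langle M(\tilde T),\nabla\rangle\tilde\psi+(M^\sharp(\tilde\psi\nabla\tilde T))_3=0$ whose principal part is the Dirichlet Laplacian, whose first-order term is skew-symmetric, and whose remaining term is a bounded smoothing zeroth-order (nonlocal) perturbation; the coefficients are regular because $T(t)\in H^3$ and $M(T(t))\in H^5\hookrightarrow C^1$ depend continuously on time (Propositions \ref{p-EU} and \ref{p-dissipativity}). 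For such equations backward uniqueness—equivalently, that a forward solution vanishing at the final time vanishes identically—is classical and follows from the logarithmic convexity / Agmon--Nirenberg method, which I would invoke as a standard theorem. When $f\in H^{-1}$, the smoothing of the backward problem places $\psi(t)\in L^2$ for $t<1$, so the theorem applies on each interval $[0,1-\delta]$; letting $\delta\to0$ and using continuity of $\psi$ into $H^{-1}$ at $t=1$ yields $f=0$. Hence $U^*(-\Delta)^{1/2}$ is injective, $U(L^2)$ is dense in $H_0^1$, and the lemma follows.

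The main obstacle is the backward uniqueness itself in the presence of the \emph{nonlocal} coupling term: one must verify that the adjoint of $\theta\mapsto\langle M(\theta),\nabla\rangle T$ remains a genuine bounded (indeed smoothing) zeroth-order perturbation, so that the equation stays within the scope of the logarithmic-convexity machinery rather than spoiling the self-adjoint-plus-skew structure of the principal and first-order parts. A secondary technical point, easily handled by the parabolic smoothing of $U$, is to upgrade density from $L^2(D)$—which a naive duality delivers—to density in $H_0^1(D)$ as required; this is precisely the role of the $(-\Delta)^{1/2}$ reduction above.
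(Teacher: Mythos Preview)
Your proposal is correct and follows essentially the same route as the paper: reduce density of $\LL_1$ to injectivity of the adjoint of the time-$1$ map, identify that adjoint with the backward solution of the dual equation~\eqref{dual-linearised}, and invoke backward uniqueness (the paper cites~\cite[Section~II.8]{BV1992}, which is precisely the Agmon--Nirenberg logarithmic-convexity result you describe). The only cosmetic difference is that the paper works directly with the $H_0^1$--$H^{-1}$ pairing---taking the annihilating functional $\psi_1\in H^{-1}$ and constructing the dual solution via the decomposition $\psi=e^{(1-t)\Delta}\psi_1+\xi$---whereas you pass through the isometry $(-\Delta)^{1/2}:H_0^1\to L^2$ and the extension of $U$ to $L^2$; these are equivalent bookkeepings of the same duality, and your observation that the nonlocal term $(M^\sharp(\psi\nabla T))_3$ is a bounded zeroth-order perturbation is exactly what keeps the problem within the scope of backward uniqueness.
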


To prove the validity of Hypothesis~\hyperlink{(H3)}{(H$_3$)} of Theorem~\ref{t-expomixing}, fix any $\xi\in H_0^1(D)$, $\e>0$, and $\tau\in(0,1)$. By Lemma~\ref{l-density-homogeneous}, there is a solution $\hat\theta\in\XX$ of Eq.~\eqref{linearised-E} with~$\zeta\equiv0$ such that $\|\hat \theta(1)-\xi\|_{H^1}<\e$. Denoting by~$R(t,\tau)$ the resolving operator for~\eqref{linearised-E}, \eqref{linearised-IBC} with $\zeta \equiv 0$ and initial condition at $t=\tau$,  the semigroup property implies
\begin{equation} \label{closeness}
\|R(1,\tau)\hat\theta_\tau-\xi\|_{H^1}<\e,
\end{equation}
where $\hat\theta_\tau=\hat\theta(\tau)$. Since $R(1,\tau):L^2\to H^1$ is continuous (see~\cite[Section~I.5]{BV1992} for more general results), we can find $\delta>0$ such that inequality~\eqref{closeness} holds with $\hat\theta_\tau=\bar\theta\in L^2$,  provided that $\|\bar\theta-\hat\theta_\tau\|<\delta$. Since~$\Theta_\tau$ is dense in~$L^2$ (see Proposition~\ref{p-UC}), there is a control $\zeta\in E$ vanishing on~$[\tau,1]$ such that the solution~$\theta\in\XX$ of~\eqref{linearised-E}, \eqref{linearised-IBC} satisfies the inequality $\|\theta(\tau)-\hat\theta_\tau\|<\delta$, so that~$\theta(1)$  belongs to the $\e$-neighbourhood of~$\xi$ in~$H^1$. Recalling that~$\xi\in H_0^1$ and~$\e>0$ were arbitrary, we conclude that~$\Theta$ is dense in~$H_0^1$. 

Thus, to complete the proof of Theorem~\ref{MT-mixing}, it remains to establish Proposition~\ref{p-UC}. This is done in Section~\ref{s-uniquecontinuation}.

\section{Density of the image for the linearised problem}
\label{s-uniquecontinuation}
This section is devoted to the proof of Proposition~\ref{p-UC}.  First,  we use a well-known argument to reduce the proof to a unique continuation property for an adjoint problem, involving a temperature~$\psi$, vanishing on the observed domain~$D_c$.  We next show that not only~$\psi$, but also the coresponding velocity~$\www = M(\psi)$ must be zero on~$D_c$. Finally, we apply Fabre--Lebeau estimates to conclude that~$\psi$ (and~$\www$) vanishes identically. 

In what follows, to simplify notation we shall assume $\tau=1$. This does not restrict the generality, since vanishing control on the interval~$[\tau,1]$ does not affect our argument, which is valid on an interval of arbitrary length. 

Throughout the section we assume that $T_0 \in X$, $\eta \in \E$,  and therefore, by Proposition~\ref{p-dissipativity}, we have 
\begin{equation}\label{eq:smT}
\|T(t)\|_{H^3} \leq N\quad \mbox{for all $t \in J$}. 
\end{equation}
Furthermore,  the smoothing of~$M$ implies that 
\begin{equation}\label{eq:smU}
\|\uuu(t)\|_{H^3} =  \|M(T)(t)\|_{H^3} \leq N \quad \mbox{for all $t \in J$},
\end{equation}
where~$N$ depends only on the parameters of the problem and the number~$a$ defining~$X=X_a$ (see~\eqref{set-Xa}).

\subsection{Reduction to unique continuation}
Following a well-known argument in the control theory, together with~\eqref{linearised-E}, we consider the dual equation to  \eqref{linearised-E}:
\begin{equation} \label{dual-linearised}
	\p_t\psi + \Delta\psi+\diver(\psi M(T))-M^*(\psi\nabla T)=0.  
\end{equation} 
Here $M^*$ denotes the (formal) adjoint of~$M$, which takes a function $\fff\in L^2(D,\R^3)$ to the third component of the unique solution $\www=(w_1,w_2,w_3)\in H_0^1\cap H^2$ for the system 
\begin{equation} \label{M-adjoint}
	-\Delta \www+\nabla q=\Ra\,\fff, \quad \diver \www=0,\quad x\in D,
\end{equation}
supplemented with the zero boundary condition.  Since~\eqref{dual-linearised} is equivalent to the system 
\begin{equation} \label{dual-linearised-temperature}
	\p_t\psi+ \Delta\psi+\diver(\psi \uuu)-w_3=0, 
	\quad -\Delta \www+\nabla q=\Ra\,\psi\,\nabla T, 
	\quad \diver \www=0,
\end{equation}
and $T$,  $\uuu$ are smooth (see \eqref{eq:smT} and \eqref{eq:smU}),  energy
arguments similar to those used in Section~\ref{s-EUR} enable one to prove that, for any $\psi_1\in H_0^1(D)$,  problem~\eqref{dual-linearised} has a unique solution $\psi\in\XX$ satisfying the initial condition 
\begin{equation} \label{dual-linearised-IC}
	\psi(1,x)=\psi_1(x). 
\end{equation}
From duality,  for any pair of functions $\theta,\psi\in\XX$ satisfying  equations~\eqref{linearised-E} and~\eqref{dual-linearised}, respectively, we have
\begin{equation} \label{duality}
	\bigl(\theta(1),\psi(1)\bigr)-\bigl(\theta(0),\psi(0)\bigr)=\int_0^1\bigl(\zeta(t),\psi(t)\bigr)\,\dd t. 
\end{equation}
If the subspace~$\Theta$ (defined in Section~\ref{s-density-derivative}) is not dense in~$L^2$, there is a vector $\psi_1 \in L^2$ orthogonal to $\Theta$. Then, by~\eqref{linearised-IBC} and~\eqref{duality}, we have 
\begin{equation} \label{dul}
	\int_0^1\bigl(\zeta(t),\psi(t)\bigr)\,\dd t=0\quad\mbox{for any $\zeta\in E$}. 
\end{equation}
Since $E$ contains all smooth functions supported in $J \times D_c$ (and vanish at $t = 1$),  \eqref{dul}
is equivalent to~$\psi = 0$ on~$J\times D_c$.  Hence, the required density will be established if we prove the following lemma.

\begin{lemma} \label{l-UC}
	Let $T_0\in X$ and~$\eta\in E$ be some functions, let $T\in\XX$ be the solution of~\eqref{boussinesq-reduced} issued from~$T_0$, and let $\psi\in\XX$ be a solution of~\eqref{dual-linearised} vanishing on~$J\times D_c$. Then $\psi\equiv0$. 
\end{lemma}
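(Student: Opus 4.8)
The plan is to deduce Lemma~\ref{l-UC} from a Carleman/unique-continuation statement for the coupled elliptic--parabolic system, proceeding in two stages: first upgrade the vanishing of $\psi$ on $J\times D_c$ to the vanishing of \emph{both} $\psi$ and the associated velocity $\www=M(\psi\nabla T)$ on $J\times D_c$ (in fact on a slightly smaller subdomain, if needed), and then invoke the Fabre--Lebeau estimates of \cite{FL-1996} for the (backward) heat operator $\p_t+\Delta$ with a bounded first-order perturbation to conclude that $\psi$ vanishes identically on $J\times D$, after which \eqref{dual-linearised} forces $\www\equiv0$ as well. Throughout we use \eqref{eq:smT}, \eqref{eq:smU}, so that $T$, $\nabla T$, $\uuu=M(T)$, $\nabla\uuu$ are bounded in $L^\infty(J\times D)$; in particular the zeroth- and first-order coefficients in \eqref{dual-linearised-temperature} are $L^\infty$, which is exactly the regularity required for the Fabre--Lebeau unique continuation to apply.

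The main work is the first stage. Since $\psi=0$ on $J\times D_c$, the first equation in \eqref{dual-linearised-temperature} shows that $w_3=\p_t\psi+\Delta\psi+\diver(\psi\uuu)=0$ on $J\times D_c$ as well (all terms involve $\psi$ and its derivatives, which vanish on the open set $D_c$; one must be slightly careful near $\p D_c\cap\{x_3=0\}$, but there $\psi$ vanishes together with its tangential derivatives by the boundary condition, and the normal derivatives are controlled because $D_c$ is a one-sided neighbourhood of the boundary --- here the $H^3$ regularity and the Dirichlet condition at $x_3=0$ are used). So at each fixed time $t$, the velocity field $\www(t)$ solves the stationary Stokes system $-\Delta\www+\nabla q=\Ra\,\psi\nabla T$, $\diver\www=0$, with right-hand side vanishing on $D_c$ and with $w_3\equiv0$ on $D_c$. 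I would then argue that a divergence-free solution of the homogeneous Stokes system in the strip $D_c$ whose third component vanishes identically must itself vanish: from $\diver\www=0$ and $w_3=0$ in $D_c$ we get $\p_{x_1}w_1+\p_{x_2}w_2=0$, and then the first two Stokes equations become $-\Delta w_i+\p_{x_i}q=0$ with $\Delta$ acting on a two-dimensional divergence-free field; combined with $\p_{x_3}w_3=0$ and the pressure equation $\Delta q=0$ in $D_c$ (take the divergence of the momentum equation), together with the no-slip condition on $\{x_3=0\}$, a unique continuation / Holmgren-type argument (or a direct Fourier analysis in the periodic horizontal variables, where each mode reduces to a constant-coefficient ODE on $(0,c)$) forces $\www\equiv0$ on $D_c$. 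Hence $\psi\nabla T=\Ra^{-1}(-\Delta\www+\nabla q)$ need not vanish, but the point we actually need is the reverse: knowing $\www=0$ on $D_c$ gives us a second piece of Cauchy data for the heat equation satisfied by $\psi$, namely $w_3=0$ there; but $w_3=0$ on $D_c$ was already obtained above, so the genuine gain is that now \eqref{dual-linearised-temperature} reads $\p_t\psi+\Delta\psi+\diver(\psi\uuu)=0$ in $J\times D_c$ with $\psi$ vanishing on an open subset.

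For the second stage I would apply the Fabre--Lebeau unique continuation theorem to the parabolic equation $\p_t\psi+\Delta\psi+\langle\uuu,\nabla\psi\rangle+(\diver\uuu)\psi=0$ satisfied by $\psi$ on all of $J\times D$ (which, since $\diver\uuu=0$, is just $\p_t\psi+\Delta\psi+\langle\uuu,\nabla\psi\rangle=0$), together with the Dirichlet boundary conditions and the fact that $\psi$ vanishes on the non-empty open subset $J\times D_c$ of the space-time cylinder. The Fabre--Lebeau result (Carleman estimates with boundary observation, valid for $L^\infty$ lower-order coefficients) then yields $\psi\equiv0$ on $J\times D$; in the periodic-strip geometry one can alternatively reduce to their setting by a reflection across $x_3=0$ or by covering $D$ with overlapping balls and propagating the zero set via the local unique continuation principle. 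Once $\psi\equiv0$, the elliptic equation $-\Delta\www+\nabla q=\Ra\,\psi\nabla T=0$ with zero boundary data gives $\www\equiv0$, completing the proof of Lemma~\ref{l-UC}.

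I expect the principal obstacle to be the first stage: extracting from ``$\psi=0$ on $D_c$'' enough information to run the parabolic unique continuation, and in particular handling the coupling through the nonlocal operator $M$ and the behaviour at the interface $\p D_c$. The delicate point is that $\psi\nabla T$ --- the Stokes forcing --- is supported in all of $D$, not just $D_c$, so one cannot directly say $\www=0$ on $D_c$; the correct logic, carried out above, is that the \emph{equation} for $\psi$ localises to $D_c$ because every term is a differential expression in $\psi$, and the coupling term $w_3$ is recovered \emph{from} $\psi$ rather than the other way round, so no information about $\www$ outside $D_c$ is needed. Making this rigorous --- especially justifying that $w_3=\p_t\psi+\Delta\psi+\diver(\psi\uuu)$ genuinely vanishes on $D_c$ up to the boundary $\{x_3=0\}$, using the $H^3$-regularity of \eqref{eq:smT} and the Dirichlet condition --- is where the care is required; the Fabre--Lebeau step is then essentially a citation.
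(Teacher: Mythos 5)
There is a genuine gap, and it sits exactly at the point the paper identifies as the main difficulty. In your Stage~1 you assert that a divergence-free solution of the homogeneous Stokes system in $J\times D_c$ with $w_3\equiv0$ and no-slip at $x_3=0$ must vanish; this is false. The field $\www=(x_3,0,0)$ with constant pressure is a counterexample, and the paper's own Fourier analysis produces precisely the nontrivial family \eqref{w-linear-inx} of horizontal flows $B(t)x_3+\sum_k c_k(t)\sinh(|k|x_3)e_k(\tilde x)k^\bot$ compatible with all your constraints. The paper does not conclude $\www=0$ on $D_c$; instead it \emph{subtracts} such a harmonic horizontal flow, passing to $\twww=\www-\zzz$ (and normalising $q$), so that the modified velocity and pressure vanish on $J\times D_c$ together with $\psi$. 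This is not a cosmetic step: in the Carleman argument the cutoff error terms ($\GGG$ in the paper) contain $\www$ and $q$ multiplied by derivatives of the cutoff, and the weight-comparison at the end requires these to vanish on the region where $\psi$ is already known to vanish.

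The more serious gap is in your Stage~2. On all of $J\times D$ the function $\psi$ does \emph{not} satisfy the scalar equation $\p_t\psi+\Delta\psi+\langle\uuu,\nabla\psi\rangle=0$: by \eqref{dual-linearised}--\eqref{dual-linearised-temperature} it satisfies $\p_t\psi+\Delta\psi+\diver(\psi\uuu)=w_3$, where $w_3$ is the third component of the Stokes solve with data $\Ra\,\psi\nabla T$ --- a nonlocal zeroth-order term that vanishes only on $D_c$, not on $D$. Dropping it, as you do, is unjustified, and a citation of scalar parabolic unique continuation with $L^\infty$ lower-order coefficients therefore does not apply. Handling this coupling is the crux of the paper's proof: it applies the parabolic Carleman estimate to $z=\zeta\psi$ \emph{with the term $\zeta w_3$ kept on the right-hand side}, then controls $\|e^{\varphi/h}\zeta w_3\|$ by applying Fabre--Lebeau's elliptic Carleman estimates for Stokes-type operators twice (to the localised velocity--pressure pair and to the pressure equation), with the \emph{same} weight $\varphi$, and absorbs the result; the global conclusion then comes from a sliding-surface propagation argument (Property \hyperlink{(P)}{(P)} applied along the level sets $x_3=G(\lambda,t,x')$), not from a covering-by-balls or reflection reduction. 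Without an estimate of the nonlocal term $w_3$ in the Carleman weight, your argument does not close.
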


A proof of this result is given in the next two subsections. We first show that if~$\psi$ vanishes on $J\times D_c$, then the associated velocity field~$\www$ (defined by~\eqref{M-adjoint} with $\fff=\Ra\,\psi\,\nabla T$\,) must be a horizontal flow. We next use some Carleman estimates to prove that~$\psi\equiv0$. 

\subsection{Velocity field on the observed domain}

Let us denote by $(\psi,\www)$ the solution of~\eqref{dual-linearised-temperature} associated with~$\psi\in\XX$ and assume that $\psi = 0$ on~$J\times D_c$.  We also assume that $T$ and $\uuu$
are sufficiently smooth,  but they do not play any role in arguments in this section,  in particular they do not have to solution of any equation.  The goal of this section is to prove that 
 $\www = 0$ on~$J \times D_c$.

First,  we claim that, for $(t,x)\in J\times D_c$, 
\begin{equation} \label{w-linear-inx}
(w_1,w_2)=B(t)x_3+\sum_{k\in \Z_*^2}c_k(t)\sinh\bigl(|k|x_3\bigr)e_k(\tilde x)\,k^\bot,  \qquad w_3 = 0 \,,
\end{equation}
 where $\Z_*^2 = \Z^2 \setminus \{0, 0\}$, 
 $B\in H^1(J,\R^2)$ and $c_k\in H^1(J)$ are some functions, $k^\bot=(-k_2,k_1)$, $\tilde x=(x_1,x_2)$, and $\{e_k\}$ is the trigonometric basis in~$L^2(\T^2)$ defined by 
 $$
 e_k(\tilde x)
 =\left\{
 \begin{array}{cl}
 	\cos(k\cdot\tilde x) & \mbox{for $k_2>0$ or $k_2=0$, $k_1>0$},\\[3pt]
 	\sin(k\cdot\tilde x) & \mbox{for $k_2<0$ or $k_2=0$, $k_1<0$}. 
 \end{array}
 \right.
 $$
Indeed, since~$\psi$ vanishes in~$\DD:=J\times D_c$, the first equation in~\eqref{dual-linearised-temperature} implies that $w_3=0$ in $\DD$.  Combining this with equation for~$w_3$ in~\eqref{dual-linearised-temperature},  we see that~$q_{x_3} = 0$ in $\DD$,  and therefore~$q$ does not depend on~$x_3$ in $\DD$. 
Since $w$ is divergence free, $\tilde{\www}=(w_1,w_2)$ is a divergence-free vector field on~$\T^2_{\tilde x}$ parametrized by~$(t,x_3) \in J \times (0, c)$.  Using the Fourier expansion in~$(x_1, x_2)$, we can write 
\begin{equation} \label{w-prime}
	\tilde{\www}(t,x)=A(t,x_3)+\sum_{k\in\Z_*^2}a_k(t,x_3)e_k(\tilde x)\,k^\bot. 
\end{equation}
Differentiating the second and third equations in~\eqref{dual-linearised-temperature} with respect to~$x_3$ and using the relation $\partial_3 q = \psi = 0$ in~$\DD$,  we conclude, that for any $t \in J$, the functions~$\p_3w_1(t)$ and~$\p_3w_2(t)$ are harmonic in~$x\in D_c$.  Then,  by representation~\eqref{w-prime}, for any $t \in J$ we derive 
$$
A'''=0, \quad a_k'''-|k|^2a_k'=0 \quad\mbox{for $x_3 \in (0, c)$},
$$
where prime denotes the derivative in~$x_3$. Using that~$\www$ vanishes for $x_3=0$, we  obtain
\begin{align*}
A(t,x_3)&=B(t)x_3+C(t)x_3^2,\\	
a_k(t,x_3)&=b_k(t)\bigl(\cosh(|k|x_3)-1\bigr)+c_k(t)\sinh(|k|x_3),
\end{align*}
where $B,C\in H^1(J,\R^2)$ and~$b_k,c_k\in H^1(J)$ are some functions. Thus, to complete the proof of~\eqref{w-linear-inx}, it remains to show that $C\equiv0$ and $b_k\equiv0$. 

To this end, we note that the second and third equations in~\eqref{dual-linearised-temperature} imply, for $t \in J$ and  $\tilde{x} \in \T^2$,
\begin{align}
\p_1 q(t,\tilde x)
&=\Delta w_1= 2C_1(t)-\sum_{k\in\Z_*^2}b_k(t)|k|^2k_2e_k(\tilde x),\label{p_1}\\
\p_2 q(t,\tilde x)
&=\Delta w_2=2C_2(t)+\sum_{k\in\Z_*^2}b_k(t)|k|^2k_1e_k(\tilde x).\label{p_2}
\end{align}
Calculating $\p_1\p_2 q$ from~\eqref{p_1} and~\eqref{p_2} in two different ways, we conclude that $b_k\equiv0$ for any $k\in\Z_*^2$. Substituting these relations into~\eqref{p_1} and~\eqref{p_2}, we obtain $\p_1 q = 2C_1(t)$ and $\p_2 q = 2C_2(t)$. Recalling that~$q$  is periodic in~$x_1$ and~$x_2$, we see that $C_1\equiv C_2\equiv0$, which completes the proof of~\eqref{w-linear-inx}. 

We now show that, after a substitution, we can assume that $\www = 0$ in~$\DD$. Let us introduce vector functions $\zzz=(w_1,w_2,0)$ and $\twww=\www-\zzz$ in $J\times D$, where $(w_1,w_2)$ is defined by the right-hand side of~\eqref{w-linear-inx} in the whole domain $J \times D$. We claim that the triple $(\psi,\twww,q)$ is a solution of~\eqref{dual-linearised-temperature} in~$J\times D$. Indeed, the first equation in~\eqref{dual-linearised-temperature} is satisfied with~$\www$ replaced by~$\twww$, because $z_3 \equiv 0$, and therefore $w_3 = \tilde{w}_3$.  Furthermore,  since  $\zzz$ is harmonic by~\eqref{w-linear-inx},  it follows that 
$$
-\Delta\twww+\nabla q
=-\Delta\www-\Delta\zzz+\nabla q
=\Ra\,\psi\,\nabla T.
$$
Finally, $\twww$ is divergence free, because so are~$\www$ and~$\zzz$. 

Also,  $(\psi,\twww)$ vanishes in~$\DD$,  and therefore $\nabla q = 0$ in $\DD$. Hence, without loss of generality, we assume that $q = 0$. In the next subsection, we prove that $\twww\equiv0$ and $\psi\equiv 0$ in $J\times D$, which  completes the proof of Lemma~\ref{l-UC}.\footnote{Since~$w_1$ and~$w_2$ vanish for $x_3=1$, and we proved that $\twww\equiv0$ in $J \times D$, we can easily conclude that $\www \equiv 0$. However, we do not need this fact.} 

\subsection{Application of Carleman-type estimates}
\label{s-carleman}
In what follows, we drop the tilde from the notation and write $\www$ instead of~$\boldsymbol{\widetilde w}$. We wish to prove that if a solution~$(\psi,\www)$ of~\eqref{dual-linearised-temperature} is zero in~$J\times D_c$,  then~$\psi$ vanishes identically on $J \times D$. The proof of this fact uses some arguments of the paper~\cite{FL-1996} and is divided into four steps. In what follows, we denote by~$\dot I$ the interior of an interval~$I$.

\smallskip
{\it Step~1: Localisation.}
For any $r>0$ and $y^0=(t^0,x^0)\in\R_{t,x}^4$, we denote
\begin{align*}
B_r(y^0)&=\{y = (t,x)\in J \times \R^3:|y-y^0|\le r\}, \\
W_r(y^0)&=\{y=(t,x)\in J \times \R^3:|t-t^0|\le r^2, |x-x^0|\le r\}. 
\end{align*}
We claim that it suffices to prove the following unique continuation property:

\begin{itemize}
	\item [\hypertarget{(P)}{\bf(P)}]
	\sl For any point $y^0=(t^0,x^0)\in\dot J\times D$ with $0<x_3^0<1$ and any smooth function $\sigma(t,x)$ defined in a neighbourhood of~$y^0$ such that $\sigma(y^0)=0$ and $\nabla_x \sigma(y^0)\ne0$, there are~$\e,r\in(0,1)$ with the following property: if $\psi(t,x)=0$ for $(t,x)\in \{y\in\R^4:\sigma(y)<0\}\cap W_r(y^0)$, then $\psi\equiv0$ in~$B_\e(y^0)$.
\end{itemize}
Indeed, assuming that \hyperlink{(P)}{(P)} is established, let us set $\T^2=[0,2\pi]\times[0,2\pi]$ and fix a closed interval $I\subset \dot J$ and a smooth function $g:J\times \T^2\to\R_+$ such that $0<g(t,x')<1$ for $t\in\dot I$, $x'\in \T^2$ and $g(t,x')=0$ for $t\in J\setminus I$, $x'\in \T^2$.  We define
$$
G(\lambda,t,x')=\frac{(1-\lambda)c+\lambda g(t,x')}{1-\lambda+\lambda g(t,x')}, \quad 0\le \lambda<1, \quad (t,x')\in I\times \T^2 \,,
$$
where $c$ originates in the definition of $D_c$.
Thus, $G$ is a smooth function such that $G(0,t,x')\equiv c$ and $G(\lambda,t,x')\nearrow1$ as $\lambda\to1^-$ for $t\in \dot I$. We set 
$$
\hat \lambda=\sup\{\lambda\in[0,1):\psi(t,x)=0\mbox{ for $(t,x_1,x_2)\in I\times \T^2$, $0<x_3<G(\lambda,t,x')$}\}. 
$$
The set under the supremum sign is non-empty as it contains $\lambda = 0$,  since $\psi$ vanishes in $J\times D_c$.  We claim that $\hat\lambda=1$.  For a contradiction suppose $\hat\lambda < 1$ and consider the function $\sigma(t,x)=x_3-G(\hat\lambda,t,x')$. Observe that $\partial_{x_3} \sigma = 1 \neq 0$,  and therefore we can apply Property~\hyperlink{(P)}{(P)} to any point $y^0\in I\times D$ satisfying $\sigma(y^0)=0$. Then, $\psi$ must vanish in a space-time neighbourhood of the surface $\{\sigma=0\}$. This contradicts the definition of~$\hat\lambda$ and shows that~$\psi\equiv0$ in~$I\times D$. Since~$I$ was arbitrary, $\psi \equiv 0$ in~$J \times D$. 

\smallskip
Thus, we need to prove~\hyperlink{(P)}{(P)}. Since~\hyperlink{(P)}{(P)} is a local property, to simplify notation, we assume without loss of generality that $y^0=(0,0)$ and $\Ra=1$. By a rotation of the $x$-coordinate and multiplication of~$\sigma$ by a constant, we can also set $\nabla_x\sigma(0,0)=(0,0,1)$. This transformation does not change the Laplacian, but modifies slightly the divergence and gradient operators, bringing system~\eqref{dual-linearised-temperature} to the form
\begin{equation} \label{dual-transformed}
	\p_t\psi+\Delta\psi+\diver(\psi \vvv)=w_3, 
	\quad -\Delta \www+Aq=\psi AT, 
	\quad A^* \www = 0,
\end{equation}
where $Af = R \nabla f$ and $A^*\zzz = -\diver(R^t\zzz)$, with $R$ being an orthogonal matrix independent of~$(t,x)$, and $\vvv := R^t\uuu$ has the same regularity as~$\uuu$. Observe that~$A$ is a  first-order homogeneous differential operator with constant coefficients and $A^* A = -\Delta$. In the next three steps, we  prove Property~\hyperlink{(P)}{(P)} for \eqref{dual-transformed}, adapting an argument of~\cite{FL-1996}. 

\smallskip
{\it Step~2: Estimating~$\psi$.}
Let~$N$ and~$\delta$ be positive numbers to be chosen below and let~$\varphi\in C_0^\infty(\R^4)$ be defined by~\eqref{function-phi}. We denote by~$t_i$, $r_i$, $C_i$, and~$h_i$ the positive numbers defined in Propositions~\ref{p-carleman-heat} and~\ref{p-carleman-elliptic} and set
$$
t_0=t_1\wedge t_2,\quad 
r_0=r_1\wedge r_2,\quad 
C_0=C_1\vee C_2,\quad 
h_0=h_1\wedge h_2,
$$
so that inequalities~\eqref{carleman-heat} and~\eqref{carleman-laplace} hold for $0<h\le h_0$ and $|t|\le t_0$. Let $r>0$ be so small that $W_r(0)\subset[-t_0,t_0]\times B_{r_0}$ and let $\zeta \in C_0^\infty(W_r(0))$ be a function of the form $\zeta(t,x)=\zeta_1(t)\zeta_2(|x|)$ such that $\zeta_1 (t) = 1$ for $|t| \leq \left(\frac{3}{4}r\right)^2$,  $\zeta_1 (t) = 0$ for $|t| > r^2$, and $\zeta_2 (|x|) = 1$ for $|x| \leq \frac{3}{4}r$,  $\zeta_2 (|x|) = 0$ for $|x| > r$. Notice that $\zeta = 1$ and $\nabla \zeta = 0$ on~$W_{3r/4}(0)$.

Setting $z=\zeta \psi$, we use the first equation in~\eqref{dual-transformed} to write
\begin{equation}\label{cteq}
(\p_t+\Delta)z=\zeta  w_3-z\diver\vvv-\langle\nabla_x z,\vvv\rangle+F,
\end{equation}
where $F=(\p_t\zeta +\Delta\zeta )\psi+2\langle \nabla_x \zeta , \nabla_x\psi\rangle +\psi \langle\nabla_x\zeta ,\vvv\rangle$. Observe that each term in~$F$ contains a derivative of $\zeta$, and consequently $F = 0$ on $W_{3r/4}(0)$. Applying inequality~\eqref{carleman-heat} (cf. Remark \ref{rmk:car}) to $z$ solving \eqref{cteq},   we derive for $0<h\le h_0$
\begin{equation} \label{estimate-psi}
	\bigl\|e^{\varphi/h}z\bigr\|^2+h^2\bigl\|e^{\varphi/h}\nabla_xz\bigr\|^2
	\le C_0h^3\bigl\|e^{\varphi/h}\bigl(\zeta  w_3-z\diver\vvv-\langle\nabla_x z,\vvv\rangle+F\bigr)\bigr\|^2,
\end{equation}
where $\|\cdot\|$ stands for the $L^2$ norm over~$\R^4$. Since~$\uuu$ satisfies \eqref{eq:smU} and~$\vvv$ is a rotation of~$\uuu$, we have $\vvv\in C([-t_0,t_0],H^3)$.  Using the embedding $H^2 \hookrightarrow L^\infty$,  we see that~$\vvv$ and~$\diver\vvv$ are uniformly bounded. It follows that
\begin{equation} \label{esov}
\bigl\|e^{\varphi/h}\bigl(z\diver\vvv+\langle\nabla_x z,\vvv\rangle\bigr)\bigr\|
\le c_1\bigl\|e^{\varphi/h}\bigl(|z|+|\nabla_x z|\bigr)\bigr\|.
\end{equation}
Here and henceforth $c_j$,  $j = 1, 2, \dots$ stand for unessential positive numbers. Substituting~\eqref{esov} into~\eqref{estimate-psi} and decreasing if necessary the number~$h_0$, we obtain
 \begin{equation} \label{estimate-z}
	\bigl\|e^{\varphi/h}z\bigr\|^2+h^2\bigl\|e^{\varphi/h}\nabla_xz\bigr\|^2
	\le c_2h^3\bigl(\|e^{\varphi/h}\zeta  w_3\|^2+\|e^{\varphi/h}F\|^2\bigr),
\end{equation}
where $0<h\le h_0$. Our next goal is to estimate the term involving~$w_3$. 

\smallskip
{\it Step~3: Estimating~$\www$.}
Setting $\yyy=\zeta \www$ and $f=\zeta q$, we use the second equation in~\eqref{dual-transformed} to derive
\begin{equation} \label{equation-for-y}
	-\Delta\yyy +A f=zAT-2\langle\nabla_x\zeta ,\nabla_x\rangle\www-(\Delta\zeta )\www+qA\zeta . 
\end{equation}
Furthermore, applying~$A^*$ to the second equation in~\eqref{dual-transformed} and using the third one, we obtain 
$$
\Delta q= -A^*(\psi A T)=\diver(\psi\nabla_x T).
$$
It follows  that
\begin{equation} \label{Delta-f}
\Delta f-\diver(2q\nabla_x\zeta-z\nabla_xT)
=-\psi\langle\nabla_x\zeta,\nabla_xT\rangle - q\Delta\zeta. 
\end{equation}
We now apply Proposition~\ref{p-carleman-elliptic} twice, to the pair $(y_j,f)$, with the operator $L=A$ (see~\eqref{equation-for-y}), and to the pair $(f,2q\nabla_x \zeta - z\nabla T)$, with the operator $L=\diver$ (see~\eqref{Delta-f}). Setting
\begin{align*}
\GGG_1&= -zAT+2\langle\nabla_x\zeta,\nabla_x\rangle\www+(\Delta\zeta)\www-qA\zeta,\\
G_2&=-\psi\langle\nabla_x\zeta,\nabla_xT\rangle-q\Delta\zeta, \quad 
\boldsymbol{g}=2q\nabla_x\zeta-z\nabla_xT,
\end{align*}
we thus obtain the following two estimates valid for $|t|\le t_0$ and $0<h\le h_0$:
\begin{align*}
	\bigl\| e^{\varphi_t/h}\yyy\bigr\|^2_{x}
	+ h^2\bigl\| e^{\varphi_t/h}\nabla_x\otimes\yyy\bigr\|^2_x
	&\le c_3\Bigl(h\bigl\|e^{\varphi_t/h}f\bigr\|^2_x+h^3\bigl\|e^{\varphi_t/h}\GGG_1\bigr\|_x^2\Bigr), \\
	\bigl\|e^{\varphi_t/h}f\bigr\|_x^2
	+ h^2\bigl\|e^{\varphi_t/h}\nabla_x f\bigr\|_x^2
	&\le c_3\Bigl(h\bigl\|e^{\varphi_t/h}\boldsymbol{g}\bigr\|_x^2+h^3\bigl\|e^{\varphi_t/h}G_2\bigr\|_x^2\Bigr),
\end{align*}
where $\|\cdot\|_x$ stands for the $L^2$ norm over~$\R^3$ (note that all the terms in the right-hand sides are supported on~$W_r(0)$). 
By \eqref{eq:smT},  the function~$T$ is uniformly (in time) bounded in $H^3$,  and consequently, by the embedding $H^2 \hookrightarrow L^\infty$, we see that~$\nabla T$ and~$AT$ are uniformly bounded in space and time. Substituting the second inequality into the first one, integrating in time, rearranging terms between $\GGG_1$ and $G_2$, we obtain the following estimate for the third component of~$\yyy$:
\begin{equation} \label{estimate-y3}
	\bigl\|e^{\varphi/h}\zeta w_3\bigr\|^2
	\le c_4h^2\Bigl(\bigl\|e^{\varphi/h}\GGG\bigl\|^2
	+\bigl\|e^{\varphi/h}z\bigl\|^2\Bigr),
\end{equation}
where the function~$\GGG$ collects all the terms containing derivatives of~$\zeta $ in~$x$:
$$
\GGG = |q\nabla_x\zeta| + |\langle\nabla_x\zeta, \nabla_x\rangle\www| + |(\Delta\zeta )\www|  + |\psi \langle \nabla_x\zeta,  \nabla_xT\rangle| + |q\Delta\zeta|.
$$
Combining~\eqref{estimate-z} and~\eqref{estimate-y3}, for sufficiently small~$h$ we derive the following inequality after rearrangement:
\begin{equation} \label{final-estimate}
	\bigl\|e^{\varphi/h}z\bigr\|^2+h^2\bigl\|e^{\varphi/h}\nabla_xz\bigr\|^2
	\le c_5h^3\,\bigl\|e^{\varphi/h}\bigl(|F|+\GGG\bigr)\bigr\|^2. 
\end{equation}

\smallskip
{\it Step~4: Conclusion of the proof.}
Let us set $\Sigma =\supp(\nabla\zeta )\cap\{\sigma\ge0\}$ and suppose that, for $\varphi$ defined in \eqref{function-phi}, we have proved the inequality
\begin{equation} \label{phi00}
\varphi(0,0)=\delta^2>\sup_{(t,x)\in \Sigma }\varphi(t,x)=:M
\end{equation}
for an appropriate choice of the parameters~$N$ and~$r$. Then, we can find $\e>0$ such that 
$$
m:=\inf_{(t,x)\in B_\e(0)}\varphi(t,x)\ge \e+\sup_{(t,x)\in \Sigma }\varphi(t,x) = \e + M \,. 
$$
Since each term in $F$ or $|\GGG|$ contains 
a derivative of $\zeta$, we have $|F|+|\GGG|=0$ outside~$\Sigma $, and therefore~\eqref{final-estimate} with $|h|\ll1$ implies that
$$
e^{2m/h}\|z\|_{L^2(B_\e(0))}^2
\le \bigl\|e^{\varphi/h}z\bigr\|^2
\le c_5h^3e^{2M/h}\bigl(\|F\|^2+\|\GGG\|^2\bigr). 
$$
Consequently,
$$
\|z\|_{L^2(B_\e(0))}^2\le c_5h^3e^{-2\e/h}\bigl(\|F\|^2+\|\GGG\|^2\bigr). 
$$
Letting $h \to 0^+$ and using that~$F$ and~$\GGG$ are uniformly bounded in $h$, we see that~$z$ and, hence, $\psi$~must vanish in~$B_\e(0)$, as desired. Thus, to complete the proof, it remains to establish~\eqref{phi00} for an appropriate choice of~$N$, $\delta$, and~$r$. 

Let us set $\varphi_1(t,x)=\delta-x_3-N(x_1^2+x_2^2+t^2)$ and observe that our goal is to prove the inequality
\begin{equation} \label{reduced-inequality}
	\sup_{(t,x)\in\Sigma }\varphi_1(t,x)<\delta. 
\end{equation}
By the definition of~$\zeta$, if $(t,x)\in \Sigma \subset \supp(\nabla\zeta)$, then 
\begin{equation} \label{support-inx}
\frac{3r}{4}\le |x|\le r, \quad |t|\le r^2
\qquad\mbox{or}\qquad 
|x|\le r, \quad \Bigl(\frac{3r}{4}\Bigr)^2\le |t|\le r^2. 
\end{equation}
To prove~\eqref{reduced-inequality}, we first assume that $x_3\ge0$. If $x_1^2+x_2^2\ge r^2/2$, then $\varphi_1(t,x)\le \delta - Nr^2/2$. Otherwise, it follows from~\eqref{support-inx} that either $x_3\ge r/4$ or $|t|\ge (\frac{3r}{4})^2$. In both case, we obtain $\varphi_1(t,x)\le \delta-\lambda(r)$, where $\lambda(r)$ is the minimum of~$r/4$ and~$(3r/4)^2$, so that~\eqref{reduced-inequality} follows. 

Next, we assume $x_3<0$.  Since $\sigma(0)=0$ and $\nabla_x\sigma(0)=(0,0,1)$,  there is a number $C\ge1$,  independent of~$\delta$, $r$, and~$N$ (in fact $C$ depends only on $\sigma$), 
such that 
$$
\sigma(t,x)\le \frac{x_3}{2}+C(x_1^2+x_2^2+|t|)\quad\mbox{for $|t|+|x|\le C^{-1}$}. 
$$
Since $\sigma \geq 0$ on $\Sigma$, it   follows that 
\begin{equation} \label{estimate-forx3}
-x_3\le 2C(x_1^2+x_2^2+|t|)\quad\mbox{for $(t,x)\in\Sigma$},	
\end{equation}
provided  $r\ll1$. We thus obtain the inequality 
\begin{equation} \label{bound-phi1-upper}
\varphi_1(t,x)\le  
\delta-(Nt^2-2C|t|)-(N-2C)(x_1^2+x_2^2). 
\end{equation}
Up to this point, the number~$N$ was arbitrary and $r>0$ was sufficiently small. We now fix a positive number $r<\frac{1}{12C}$ for which the above arguments are valid and set $N=10C/r^2$.

Suppose $(t,x)\in\Sigma$ is such that the first pair of inequalities in~\eqref{support-inx} holds. In this case, if $x_1^2+x_2^2\ge r^2/2$, then~\eqref{bound-phi1-upper} implies that  
$$
\varphi_1(t,x)\le \delta+2Cr^2-8C(x_1^2+x_2^2)\le \delta-2Cr^2. 	
$$
If $x_1^2+x_2^2\le r^2/2$,  then~\eqref{estimate-forx3} shows that  $|x_3|\le 3Cr^2$. On the other hand, the first inequality in~\eqref{support-inx} implies that 
$$
|x_3|^2\ge \frac{9r^2}{16}-x_1^2-x_2^2\ge \frac{r^2}{16},
$$
so that $|x_3|\ge r/4$, and we obtain a contradiction with the bound $|x_3| \leq 3Cr^2$ for $r\le \frac{1}{12C}$. 

Finally, assume that $(t,x)\in\Sigma$ is such that the second pair of inequalities in~\eqref{support-inx} holds. In this case, it follows from~\eqref{bound-phi1-upper} and the last inequality in~\eqref{support-inx} that 
$$
\varphi_1(t,x)\le \delta-|t|(N|t|-2C)\le \delta-2Cr^2. 
$$
This completes the proof of~\eqref{reduced-inequality} and that of Property~\hyperlink{(P)}{(P)}.

\section{Appendix}
\subsection{Sufficient condition for exponential mixing}

In this section we state an abstract result that guarantee mixing and uniqueness of the invariant measure.  We also use notation from Section~\ref{s0}.

Let $X$ be a compact subset of a closed affine subspace~$\HH$ in a separable Hilbert space~$H$.  Let $(T_k,\IP_T)$ be a discrete-time Markov process in~$X$ with a transition function $P_k(T,\Gamma)$ 
(where $T\in X$ and $\Gamma\in\BB(X)$) and the corresponding Markov operators~$\PPPP_k$ and~$\PPPP_k^*$ acting in the spaces~$C(X)$ and~$\PP(X)$, respectively. We assume that the Markov process 
 is generated by a random dynamical system of the form~\eqref{RDS}, where $\{\eta_k\}_{k\ge1}$ is a sequence of i.i.d.\ random variables in a separable Hilbert space~$E$, and $\sS:\HH\times E\to \HH$ is a continuous map. We denote by~$\KK$ the support of the law for~$\eta_k$ and assume that $\sS(X\times \KK)\subset X$. A proof of the following result can be found in~\cite{JNPS-2021,shirikyan-jems2021}.

\begin{theorem} \label{t-expomixing}
	Let us assume that the random dynamical system~\eqref{RDS} satisfies the following four hypotheses. 
\begin{itemize}
	\item[\hypertarget{(H1)}{\bf(H$_1$)}] 
	There is a Hilbert space~$V$ compactly embedded into~$H$ such that the map~$\sS$ is twice continuously differentiable from~$\HH\times E$ to~$V$, and its derivatives are bounded on bounded subsets.
	\item[\hypertarget{(H2)}{\bf(H$_2$)}]
	For any $\e>0$, there is a function  $\widehat T\in X$ and an integer $m\ge1$ with the following property: for any $T_0\in X$ one can find $\zeta_1,\dots,\zeta_m\in\KK$ such that 
	\begin{equation} \label{e-control}
		\|\sS^m(T_0;\zeta_1,\dots,\zeta_m)-\widehat T\|_H\le \e,
	\end{equation}
	where $\sS^k(T_0;\eta_1,\dots,\eta_k)$ denotes the trajectory of~\eqref{RDS} at time~$k$. 
	\item[\hypertarget{(H3)}{\bf(H$_3$)}] 
	For any $T\in X$ and~$\eta\in\KK$, the derivative $(D_\eta \sS)(T,\eta):E\to H$ has a dense image. 
	\item[\hypertarget{(H4)}{\bf(H$_4$)}] 
	There is an orthonormal basis $\{e_j\}$ such that the random variables~$\eta_k$ can be written in the form
	\begin{equation*}
		\eta_k=\sum_{j=1}^\infty b_j\xi_{jk}e_j,
	\end{equation*}
	where $b_j$ are non-zero numbers such that $\sum_jb_j^2<\infty$, and~$\xi_{jk}$ are independent random variables whose law~$\ell_j$ possess densities~$\rho_j\in C^1(\R)$ supported in the interval~$[-1,1]$.
\end{itemize}
Then the Markov process~$(T_k,\IP_T)$ has a unique stationary measure $\mu\in\PP(X)$, and there are positive numbers~$C$ and~$\gamma$ such that, for any $\lambda\in\PP(X)$, 
\begin{equation} \label{mixing-dL}
	\|\PPPP_k^*\lambda-\mu\|_L^*\le Ce^{-\gamma k}\quad\mbox{for all $k\ge0$},
\end{equation}
where $\|\cdot\|_L^*$ stands for the dual-Lipschitz norm on~$\PP(X)$. 
\end{theorem}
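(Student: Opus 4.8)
The plan is to establish the result by the coupling method, reducing it to a one-step \emph{squeezing} estimate. Since~$X$ is compact and $\sS(X\times\KK)\subset X$, the Bogolyubov--Krylov argument already produces a stationary measure $\mu\in\PP(X)$, so the whole content is the bound~\eqref{mixing-dL}, with uniqueness of~$\mu$ as a by-product. As~$\PPPP_k^*$ is affine and every $\lambda\in\PP(X)$ is a barycentre of Dirac masses, it suffices to find $C,\gamma>0$ such that $\|\PPPP_k^*\delta_u-\PPPP_k^*\delta_{u'}\|_L^*\le Ce^{-\gamma k}$ for all $u,u'\in X$. I would obtain this by building, on one probability space, two trajectories $(\uuu_k)$ and $(\uuu_k')$ of~\eqref{RDS} issued from~$u$ and~$u'$, driven by correlated noise sequences each i.i.d.\ of law~$\ell$, with $\E\bigl[\min(1,\|\uuu_k-\uuu_k'\|_H)\bigr]\le Ce^{-\gamma k}$; since $\|f\|_L\le1$ forces $|f(a)-f(b)|\le\min(2,\|a-b\|_H)$, this gives the desired dual-Lipschitz bound at once.

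The coupling alternates two regimes. When $\uuu_k,\uuu_k'$ are far apart I invoke Hypothesis~\hyperlink{(H2)}{(H$_2$)}: for the associated~$\widehat T$ and~$m$ there are control sequences steering~$\uuu_k$ and~$\uuu_k'$ into $B_H(\widehat T,\e)$, and, since~$\KK$ is the support of the law of~$\eta_j$ and~$\sS$ is continuous by~\hyperlink{(H1)}{(H$_1$)}, letting the two length-$m$ noise blocks be independent of each other drives \emph{both} trajectories into $B_H(\widehat T,\e)$ after~$m$ steps with probability at least $p_0>0$, uniform in the starting points by compactness of~$X$. Once $\|\uuu_k-\uuu_k'\|_H\le\rho:=2\e$ I switch to a one-step coupling of $\eta_{k+1}$ and $\eta_{k+1}'$, to be constructed below, with the property that for some fixed $q\in(0,1)$, $C_*>0$ and all $\rho$-close pairs,
\begin{equation} \label{plan-squeeze}
\IP\bigl\{\|\uuu_{k+1}-\uuu_{k+1}'\|_H\le q\,\|\uuu_k-\uuu_k'\|_H\bigr\}\ge 1-C_*\,\|\uuu_k-\uuu_k'\|_H,
\end{equation}
while on the complementary event the deterministic Lipschitz bound $\|\uuu_{k+1}-\uuu_{k+1}'\|_H\le\Lambda\,\|\uuu_k-\uuu_k'\|_H$ from~\hyperlink{(H1)}{(H$_1$)} holds and, in any case, $\uuu_{k+1},\uuu_{k+1}'\in X$. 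A standard bookkeeping based on~\eqref{plan-squeeze} shows that, choosing~$\e$ small enough, with probability at least~$\tfrac12$ the distance stays below~$\rho$ for all subsequent times and then decays like~$q^k$; hence each \emph{attempt} --- a recurrence phase of length~$m$ followed by the contraction phase --- succeeds with a probability bounded below, and the classical renewal estimate over the geometrically many attempts, of geometrically distributed durations, gives $\E[\min(1,\|\uuu_k-\uuu_k'\|_H)]\le Ce^{-\gamma k}$.

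The core of the proof is the construction of the one-step coupling yielding~\eqref{plan-squeeze}. Here I use a splitting $H=H_{(N)}\oplus H^{(N)}$, with~$P_N$ the orthogonal projection onto the finite-dimensional subspace~$H_{(N)}$ and $Q_N=I-P_N$. By~\hyperlink{(H1)}{(H$_1$)}, $\sS$ maps into~$V$ with derivatives bounded on $X\times\KK$ and $V\hookrightarrow H$ compactly, so $\|Q_Nh\|_H\le\alpha_N\|h\|_V$ with $\alpha_N\to0$; this bounds the high-frequency part of $\sS(u,\eta)-\sS(u',\eta')$ by~$\alpha_N$ times a fixed multiple of $\|u-u'\|_H+\|\eta-\eta'\|_E$. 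The low-frequency part is cancelled \emph{exactly}: for fixed $(u',\eta)$, Hypothesis~\hyperlink{(H3)}{(H$_3$)} says $(D_\eta\sS)(u',\eta)$ has dense image in~$H$, hence $P_N(D_\eta\sS)(u',\eta):E\to H_{(N)}$ is onto, and by the $C^2$ dependence in~\hyperlink{(H1)}{(H$_1$)} together with compactness of~$X$ one gets, restricting to a suitable finite set of noise modes, uniformly bounded right inverses on a neighbourhood; a Newton / implicit-function iteration then produces, for $u,u'$ close, a measurable map $\Psi=\Psi_{u,u'}:E\to E$ with $\|\Psi(\eta)-\eta\|_E\le C\|u-u'\|_H$ and $P_N\sS\bigl(u',\Psi(\eta)\bigr)=P_N\sS(u,\eta)$. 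Fixing~$N$ large enough that~$\alpha_N$ beats the accumulated constants, the choice $\eta':=\Psi(\eta)$ gives $\|\sS(u,\eta)-\sS(u',\eta')\|_H=\bigl\|Q_N\bigl(\sS(u,\eta)-\sS(u',\eta')\bigr)\bigr\|_H\le q\|u-u'\|_H$. Finally, $\eta$ and $\eta'=\Psi(\eta)$ are realised as a genuine coupling of two copies of~$\ell$ via a maximal coupling of~$\ell$ and~$\Psi_*\ell$, which succeeds off an event of probability $\|\ell-\Psi_*\ell\|_{\mathrm{var}}$; and Hypothesis~\hyperlink{(H4)}{(H$_4$)} --- $\ell=\bigotimes_j\ell_j$ with $C^1$ densities supported in $[-1,1]$ and $\sum_jb_j^2<\infty$ --- combined with $\|\Psi(\eta)-\eta\|_E\le C\|u-u'\|_H$ gives, by the standard estimate on transformations of such product measures, $\|\ell-\Psi_*\ell\|_{\mathrm{var}}\le C_*\|u-u'\|_H$, which is exactly~\eqref{plan-squeeze}.

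The main obstacle is precisely this last construction: turning the qualitative density in~\hyperlink{(H3)}{(H$_3$)} into a \emph{uniform, quantitative} solvability of $P_N\sS(u',\,\cdot\,)=P_N\sS(u,\eta)$ with a correction controlled linearly in $\|u-u'\|_H$, and then bounding the total-variation cost $\|\ell-\Psi_*\ell\|_{\mathrm{var}}$ of the induced change of variables. The delicate point is reconciling the choice of~$N$ (large, so that~$\alpha_N$ is small) with the size of the right inverses of $P_N(D_\eta\sS)(u',\eta)$ as~$N$ grows and with the $C^1$-regularity of the densities~$\rho_j$ entering the measure-transformation estimate; once these are in hand, the recurrence argument and the renewal bookkeeping are by now routine.
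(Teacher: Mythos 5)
The paper does not actually prove Theorem~\ref{t-expomixing}: it is quoted as an abstract criterion, with the proof referred to \cite{JNPS-2021,shirikyan-jems2021} (the scheme going back to \cite{KNS-gafa2020}). Your coupling plan --- exponential recurrence to a small ball via (H$_2$) and compactness of~$X$, plus a one-step squeezing coupling built from the compact embedding $V\hookrightarrow H$ in (H$_1$), exact cancellation of finitely many low modes using the dense image of $(D_\eta\sS)$ from (H$_3$), a maximal coupling, and a total-variation bound for the induced transformation of the product noise from (H$_4$) --- is essentially the argument of those references, and the points you flag as delicate (uniform right inverses on $X\times\KK$, and the estimate $\|\ell-\Psi_*\ell\|_{\mathrm{var}}\le C\|u-u'\|_H$, which requires the correction to act only on finitely many coordinates with constants involving the corresponding $b_j$ and the $C^1$ densities $\rho_j$) are precisely the steps carried out there.
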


\subsection{Proof of Lemma~\ref{l-stability}}
\label{s-stability}
{\it Step~1: Preliminary estimates\/}. Given a function $f:\R_+\times D\to \R$ and number $s\ge0$, we shall write $\theta_sf$ for the restriction of its translation $t\mapsto f(s+t,x)$ to the domain $J\times D$. 
Let us first consider the equation
\begin{equation}\label{v-boussinesq}
	\p_t v-\Delta v+\langle M(v),\nabla\rangle v=g(t,x),
\end{equation}
supplemented with the initial and boundary conditions~\eqref{v-IBV}.  Using energy estimates,  it is standard to prove that \eqref{v-boussinesq} is well posed in~$H_0^1(D)$ for any 
$g\in L_{\rm{loc}}^2(\R_+\times D)$.  We claim that if $\delta>0$ is sufficiently small and 
\begin{equation}\label{g-small}
	\sup_{t\ge0}\|\theta_t g\|_{L^2(J\times D)}\le\delta,
\end{equation}
then, for any $R>0$ and a sufficiently large $c(R)>0$, the following properties hold:
\begin{itemize}
	\item [\hypertarget{(a-bound)}{\bf(a)}]
\sl If $v_0\in H_0^1(D)$ is such that $\|v_0\|_{H^1}\le R$, then the solution~$v$ of~\eqref{v-boussinesq}, \eqref{v-IBV} satisfies the inequality
\begin{equation}\label{v-estimate}
	\sup_{t\ge 0}\|\theta_tv\|_\XX\le c(R). 
\end{equation}
	\item [\hypertarget{(b-bound)}{\bf(b)}]
If $v_0^1, v_0^2\in H_0^1(D)$ are two initial conditions such that $\|v_0^i\|_{H^1}\le R$, $i=1,2$, and~$g_i\in L_{\mathrm{loc}}^2(\R_+\times D)$ are two functions for which~\eqref{g-small} holds, then the corresponding solutions satisfy the inequality
\begin{equation}\label{v1-v2}
	\|\theta_t v^1-\theta_t v^2\|_\XX
	\le c(R)e^{-\gamma t}\Bigl(\|v_0^1-v_0^2\|_{H^1}+\sup_{0\le s\le t}\bigl(e^{\gamma s}\|\theta_s g_1-\theta_s g_2\|_{L^2}\bigr)\Bigr),
\end{equation}
where $t\ge0$ is arbitrary, $\gamma>0$ is a number not depending on~$R$, the initial conditions $v_0^i$,  $i = 1,  2$,  or the right-hand sides $g_i$,  $i = 1,  2$,  
and the $L^2$ norm is taken on the domain~$J\times D$. 
\end{itemize}
These properties are well known for the 2D Navier--Stokes equations (cf.~\cite[Chapters~I and~II]{BV1992}).  For~\eqref{v-boussinesq}, the proof is similar and can be completed with the help of the arguments used for Propositions~\ref{p-EU} and~\ref{p-dissipativity}. 

\smallskip
{\it Step~2: Uniqueness\/}. Let $v^1, v^2$ be two solutions of~\eqref{v-equation}, \eqref{v-IBV} satisfying~\eqref{k-bound}. Since $\XX$ is continuously embedded into~$L^\infty(J,  H^1(D))$,  we see that 
\begin{equation}\label{v10-v20}
	\|v^i(0)\|_{H^1}\le R, \quad i=1,2,
\end{equation}
where $R$ depends only on~$r$. The functions~$v_i$ can be regarded as solutions of~\eqref{v-boussinesq} with the right-hand sides 
\begin{equation} \label{g_i}
g_i(t)=(I-\Pi_l)(\chi''-M_3(v^i)\chi').	
\end{equation}
Since $v_i$, $i = 1,  2$ satisfies~\eqref{k-bound}, we can find an integer $l_0\ge1$ depending only on~$r$ such that~\eqref{g-small} holds for~$g_i$ from \eqref{g_i},   provided that $l\ge l_0$. Setting $v=v^1-v^2$ and $g=g_1-g_2$, we use assertion~\hyperlink{(b-bound)}{(b)} to write
\begin{equation} \label{bound-v-g}
	\sup_{t\ge0}\|\theta_tv\|_\XX\le c(R)\sup_{t\ge0}\|\theta_tg\|_{L^2(J\times D)},
\end{equation}
where the suprema on both sides are taken over all non-negative integers $t$.  Since $g=-(I-\Pi_l)M_3(v)\chi'$,   it follows that 
\begin{equation}\label{bound-g-l}
	\sup_{t\ge0}\|\theta_tg\|_{L^2(J\times D)}\le \delta_l\sup_{t\ge0}\|\theta_tv\|_\XX,
\end{equation}
where $\{\delta_l\}$ is a sequence converging to zero as $l\to\infty$. Combining~\eqref{bound-v-g} and~\eqref{bound-g-l}, and choosing~$l\ge1$ so large that $\delta_lc(R)\le\frac12$, we see that $v\equiv0$. 

\smallskip
{\it Step~3: Existence\/}. Let us fix an integer $n$ and an initial condition $v_0\in H_0^1(D)$ such that $\|v_0\|_{H^1}\le r_0$. We use a fixed point argument to construct a solution~$v$ of~\eqref{v-equation}, \eqref{v-IBV} on every time interval $J_n=[0,n]$ with integer $n \geq 1$ such that~\eqref{k-bound} holds for $\theta_kv=v_k$ with $0\le k\le n-1$ and a number~$r>0$ not depending on~$n$.  Then, the existence of the global solution follows from the uniqueness proved in Step 2.  

For each $r > 0$, denote
\begin{equation*}
\FF_n(r) = \{w\in L^2(J_n \times D) : \|\theta_kw\|_{L^2(J \times D)}\le r  \textrm{ for all integers } 0\le k\le n-1 \} 
\end{equation*}
and note that~$\FF_n(r)$  is a closed subset in the Hilbert space~$L^2(J_n \times D)$. For any $w \in \FF_n(r)$, we define\footnote{Note that the operator~$\Pi_l$ initially defined on functions of $(t,x)\in \R_+\times D$ can be extended in a natural manner to functions on~$J_n\times D$.}  $g=g_w:=(I-\Pi_l)(\chi''-M_3(w)\chi')$. By 
the smoothing of $M$,  smoothness of $\chi$,  
the inverse Poincar\'e inequality, for every $\delta > 0$ implies that there is $l=l(r) \geq 0$ such that \eqref{g-small} holds. By Step~1, there exists a map  $R_{v_0}:\FF_n(r)\to\XX_n^0$ taking $w \in \FF_n(r)$ to the solution $v\in\XX_n^0$ of problem~\eqref{v-boussinesq}, \eqref{v-IBV}. Since the embedding $\XX_n^0 \hookrightarrow L^2(J_n \times D)$ is compact,  the map $R_{v_0}:\FF_n(r)\to L^2(J_n \times D)$ is compact.  In addition,  by~\eqref{v-estimate}, if~$r \geq c(r_0)$ is sufficiently large, then~$R_{v_0}$ maps the set~$\FF_n(r)$ into itself.  The Leray--Schauder theorem now implies that~$R_{v_0}$ has a fixed point, which is the required solution of~\eqref{v-equation}, \eqref{v-IBV} on $J_n$.

\smallskip
{\it Step~4: Exponential stability\/}. Let $v^1$ and~$v^2$ be two solutions of~\eqref{v-equation}, \eqref{v-IBV}, \eqref{k-bound} associated with some initial functions $v_0^1,v_0^2\in B_{H_0^1}(r_0)$. Regarding~$v^i$ as solutions of~\eqref{v-boussinesq} with~$g_i$ given by~\eqref{g_i}, we can write inequality~\eqref{v1-v2} for the difference $v=v^1-v^2$ as
\begin{equation}\label{sup-v-t}
\sup_{0\le s\le t}\bigl(e^{\gamma s}\|\theta_s v\|_\XX\bigr)
\le c(R)\|v_0\|_{H^1}+c(R) \sup_{0\le s\le t}\bigl(e^{\gamma s}\|\theta_s g\|_{L^2(J\times D)}\bigr),	
\end{equation}
where $g=g_1-g_2$, $v_0=v_0^1-v_0^2$, $t\ge0$ is any integer, and the supremums on both sides are taken over all integers~$s\in[0,t]$. Exactly the same argument as in Step~2 allows one to absorb the second term on the right-hand side of~\eqref{sup-v-t} by the left-hand side if $l$ is sufficiently large.   We thus obtain
$$
\sup_{0\le s\le t}\bigl(e^{\gamma s}\|\theta_s v\|_\XX\bigr)\le C\|v_0\|_{H^1},
$$
provided that~$l$ is sufficiently large. This implies the required inequality~\eqref{stability-inequality}. 

\smallskip
{\it Step~5: Periodic solution\/}. Let us fix any $r_0>0$. To construct a periodic solution with an $\XX$-norm smaller than~$r_0/2$, we first show that if $r>0$ is sufficiently large, then there is an integer $l_r\ge1$ such that, for any $l\ge l_r$,  the equation~\eqref{v-equation} has a unique $1$-periodic solution~$\bar v^l$ such that~\eqref{k-bound} holds for it. The uniqueness follows immediately from~\eqref{stability-inequality}, so that we only prove the existence. 

If $l$ is sufficiently large,  there is $\tilde{r} > 0$ and ~$\tilde v$ the solution of~\eqref{v-equation}, \eqref{v-IBV} with $v_0=0$ such that $\|\theta_k \tilde v\|_\XX\le \tilde r$ for any $k\ge1$.  Then by~\eqref{stability-inequality},  for any $r>0$, there is an integer $l_r\ge1$ such that, for any $v_0\in B_{H_0^1}(r)$ and any integer $l\ge l_r$, the unique solution of~\eqref{v-equation}, \eqref{v-IBV} constructed in Steps~2 and~3 satisfies the inequality 
\begin{equation}\label{large-time}
\|v(t)-\tilde v(t)\|_{H^1}\le Ce^{-\gamma t}\|v_0\|_{H^1}\le Ce^{-\gamma t}r.
\end{equation}
In particular, if $r\geq 2\tilde{r}$ and $t=n\ge1$ is a large integer, then $\|v(n)\|_{H^1}\le r$. Denoting by~$\RR_t:B_{H_0^1}(r)\to H_0^1$ the map taking~$v_0$ to the value of the corresponding solution of~\eqref{v-equation}, \eqref{v-IBV} at time~$t$, we conclude that~$\RR_n$ maps the ball~$B_{H_0^1}(r)$ into itself. Using again~\eqref{stability-inequality}, we see that~$\RR_n$ is a contraction for any $n\ge n_r$, where $n_r$ is a sufficiently large integer. By the Banach fixed point theorem, there exists the unique fixed point $w^n\in B_{H_0^1}(r)$.  If $w^n$ and $w^{n+1}$ are the fixed points corresponding for~$\mathcal{R}_n$ and~$\mathcal{R}_{n+1}$,  then both of them are fixed points corresponding to~$\mathcal{R}_{n(n+1)}$.  However, since the latter is unique,  we obtain $w^n = w^{n+1}$. Therefore $\mathcal{R}_1(w^n) = w^{n+1}(n+1)=w^{n+1}(0)=w^n$, so that~$\RR_t(w^n)$ is a 1-periodic solution of \eqref{v-equation}, which will be denoted by~$\bar v^l$. 

To complete the construction of the required periodic solution of~\eqref{v-equation}, it suffices to show that 
\begin{equation}\label{v-r-0}
	\|\bar v^l\|_{\XX}\to0\quad\mbox{as $l\to\infty$}. 
\end{equation}
Once this is proved, we can choose $l$ so large that~$\|\bar v^l\|_\XX\le r_0/2$. 

To prove~\eqref{v-r-0}, we view~$\bar v^l$ as a solution of~\eqref{v-boussinesq} and use inequality~\eqref{v1-v2} with $v^2\equiv0$ and the periodicity of~$\bar v^l$ to write
$$
\|\bar v^l\|_\XX=\|\theta_n\bar v^l\|_\XX
\le Ce^{-\gamma n}\|\bar v^l(0)\|_{H^1}
+C\|g^l\|_{L^2(J\times D)},
$$
where $g^l$ is given by relation~\eqref{g_i} in which~$v^i$ is replaced by~$\bar v^l$. Since $n\ge1$ is arbitrary and $\|g^l\|_{L^2(J\times D)}\to0$ as $l\to\infty$, the above inequality readily implies~\eqref{v-r-0}. This completes the proof of the lemma. 

\subsection{Proof of Lemma~\ref{l-density-homogeneous}}
\label{a-lemma43}
We follow a standard argument based on backward uniqueness for~\eqref{linearised-E}; cf.~\cite[Section~7.2]{KNS-gafa2020}. 
For the rest of the proof, we fix any $T_0 \in X$ and $\eta \in E$ and denote by $T \in \XX$ the solution of~\eqref{boussinesq-reduced} constructed in Proposition \ref{p-EU}.  Recall that $R(t,\tau)$ denotes the resolving operator for equation~\eqref{linearised-E} with $\zeta \equiv 0$ and an initial condition specified at time $t=\tau$.  Specifically, for given $\tau, t \in \R$ satisfying the inequality $\tau<t$ and for any $\theta_\tau \in H^1_0(D)$,  we have $R(t,\tau)\theta_\tau = \theta(t)$,  where $\theta \in \XX$ is a solution of~\eqref{linearised-E} with $\theta(\tau) = \theta_\tau$.  It is well known (cf.\ Proposition~\ref{p-EU}) that $R(t,\tau): H^{1}_0 \to H^{1}_0$ is well-defined continuous linear map.  

Suppose that~$\LL_1$ is not dense in~$H_0^1$. Then, there is a non-zero $\psi_1\in H^{-1}$ such that 
\begin{equation} \label{orthogon}
\bigl(R(1,0)\theta_0,\psi_1\bigr)=0
	\quad\mbox{for any $\theta_0\in H_0^1$},
\end{equation}
where $(\cdot,  \cdot)$ is understood as the duality pairing between~$H_0^1$ and~$H^{-1}$. Following a well-known idea, let us consider the dual problem~\eqref{dual-linearised}, \eqref{dual-linearised-IC}. We claim that it has a unique solution~$\psi$ of the form
\begin{equation} \label{psi}
\psi(t)=\bar\psi(t)+\xi(t),	\quad \bar\psi(t):=e^{(1-t)\Delta}\psi_1,
\end{equation}
where $\xi\in L^2(J,H^1)\cap H^1(J,H^{-1})$. Indeed,  the uniqueness is standard and follows from energy estimates.  To prove the existence, we substitute~\eqref{psi} into~\eqref{dual-linearised} and~\eqref{dual-linearised-IC} and derive the following problem for~$\xi$:
\begin{equation} \label{problem-xi}
	\p_t \xi+ \Delta \xi+\diver(\xi M(T))-M^*(\xi\nabla T)=g, \quad \xi(1)=0 \,, 
\end{equation}
where $g:=M^*(\bar\psi\nabla T)-\diver(\bar\psi M(T))$.  By the regularising property of~$M$ and Sobolev embeddings,  we have $g\in L^2(J,H_0^1)+L^2(J,H^{-1})$.  Then,  by a fixed point argument (cf.\ proof of Lemma \ref{l-stability}), it follows that we can find a unique solution~$\xi$ of~\eqref{problem-xi} in the space $L^2(J,H_0^1) \cap H^1(J,H^{-1})$. 
 
Now note that $\frac{\dd}{\dd t}(\theta(t),\psi(t))=0$ for $t\in(0,1)$, so that the function $(\theta(t),\psi(t))$ does not depend on~$t\in[0,1]$. Combining this with~\eqref{orthogon}, we see that  
$$
0=\bigl(R(1,0)\theta_0,\psi_1\bigr)
=\bigl(\theta(1),\psi(1)\bigr)
=\bigl(\theta_0,\psi(0)\bigr)
$$
for any $\theta_0\in H_0^1$. It follows that $\psi(0)=0$. By the backward uniqueness for~\eqref{dual-linearised} (see e.g.~\cite[Section~II.8]{BV1992}),  we conclude that $\psi_1=0$. This contradicts the hypothesis that $\psi_1\ne0$ and completes the proof of the lemma. 

\subsection{Carleman estimates of Fabre--Lebeau}
\label{s-Carlemanestimates}
In what follows, we denote by~$|\cdot|$ the maximum norm on Euclidean spaces, and given $r>0$, we write $B_r=\{x\in\R^3:|x|<r\}$. For positive numbers $t_0$, $r_0$, $N$, and~$\delta$ define 
\begin{align} 
W(t_0,r_0)&=\{(t,x)\in\R^4:|t|\le t_0,|x|\le r_0\}, \label{set-W}\\
\varphi(t,x)&=\bigl(x_3+N(x_1^2+x_2^2+t^2)-\delta\bigr)^2\chi(t,x),	
\label{function-phi}
\end{align}
where~$\chi$ is a smooth function with compact support that is equal to~$1$ on~$W(1,1)$.  If we consider~$\varphi$ as a function of~$x$ and regarding~$t$ as a parameter,   we write 
$\varphi_t(x)$ instead of $\varphi(t, x)$.  We recall two Carleman-type estimates for elliptic and parabolic problems established by Fabre and Lebeau~\cite{FL-1996} (see also~\cite{LR-1995} for some related results).

\subsubsection*{Heat operator}
The following result is~\cite[Lemma~4.3]{FL-1996} applied to the weight function $\varphi$ given by~\eqref{function-phi}. The fact that $\varphi$ satisfies the
required hypotheses  is established in \cite[Lemma~4.5]{FL-1996} when $N=1$. The general case  follows by a similar argument.   

\begin{proposition} \label{p-carleman-heat}
For any $N>0$ and sufficiently small $\delta>0$, there are positive numbers $t_1$, $r_1$, $C_1$, and~$h_1$ such that, for any function 
	\begin{equation*} 
		z\in L^2\bigl([-t_1,t_1],H_0^2(B_{r_1})\bigr)\cap H_0^1\bigl([-t_1,t_1],L^2(B_{r_1})\bigr),
	\end{equation*}
and for any $0<h\le h_1$ one has 
	\begin{equation} \label{carleman-heat}
	\bigl\|e^{\varphi/h}z\bigr\|^2+h^2\bigl\|e^{\varphi/h}\nabla_xz\bigr\|^2\le C_1h^3\bigl\|e^{\varphi/h}(\p_t-\Delta)z\bigr\|^2, 
	\end{equation}
	where $\|\cdot\|$ stands for the $L^2$-norm taken over~$\R^4$. 
\end{proposition}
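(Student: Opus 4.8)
The plan is to deduce \eqref{carleman-heat} from the general Carleman inequality for the heat operator proved by Fabre and Lebeau, namely \cite[Lemma~4.3]{FL-1996}. That lemma yields the estimate \eqref{carleman-heat} (with the appropriate powers of~$h$) for \emph{any} weight function that is smooth, has non-vanishing spatial gradient on the relevant box, and satisfies a H\"ormander-type pseudo-convexity condition relative to the (parabolically homogeneous) principal symbol of $\partial_t-\Delta$. Hence the whole content of the proposition is to check that the specific weight $\varphi$ defined by~\eqref{function-phi} fulfils these hypotheses on some box $W(t_1,r_1)$ with $t_1,r_1$ small. Writing $\sigma(t,x)=x_3+N(x_1^2+x_2^2+t^2)-\delta$, on $W(1,1)$ (where $\chi\equiv1$) we have $\varphi=\sigma^2$, with $\sigma(0,0)=-\delta$, $\nabla_x\sigma(0,0)=(0,0,1)$, $\nabla_x^2\sigma=\mathrm{diag}(2N,2N,0)$, $\Delta_x\sigma\equiv4N$, $\partial_t\sigma=2Nt$, and $\partial_t^2\sigma=2N$.

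The non-degeneracy of $\nabla_x\varphi$ is immediate: on $W(t_1,r_1)$ one has $\nabla_x\varphi=2\sigma\nabla_x\sigma$, and for $\delta,t_1,r_1$ small one has $|\sigma|\ge\delta/2$ while $\nabla_x\sigma=(2Nx_1,2Nx_2,1)\neq0$, so $\nabla_x\varphi\neq0$. The real work is the pseudo-convexity inequality. Following \cite{FL-1996}, I would conjugate: set $\tau=1/h$ and $w=e^{\tau\varphi}z$; then $e^{\tau\varphi}(\partial_t-\Delta)z$ decomposes as $Q_sw+Q_aw$ with $Q_s$ formally self-adjoint and $Q_a$ skew-adjoint on $L^2(\R^4)$ (here $Q_a$ collects $\partial_t$ together with the symmetrised transport term $2\tau(\nabla_x\varphi\cdot\nabla_x+\tfrac12\Delta_x\varphi)$, and $Q_s$ collects $-\Delta$, $-\tau^2|\nabla_x\varphi|^2$ and $-\tau\,\partial_t\varphi$). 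Since $z$ is compactly supported in $B_{r_1}$ and vanishes at $t=\pm t_1$, integration by parts gives
\begin{equation*}
\bigl\|e^{\tau\varphi}(\partial_t-\Delta)z\bigr\|^2
=\|Q_sw\|^2+\|Q_aw\|^2+\bigl\langle[Q_s,Q_a]w,w\bigr\rangle .
\end{equation*}
One then has to show that the commutator term, together with $\|Q_sw\|^2+\|Q_aw\|^2$, controls $\tau^3\|w\|^2+\tau\|\nabla_xw\|^2$ from below for all large~$\tau$ (after possibly shrinking $t_1,r_1,\delta$); this is exactly the pseudo-convexity statement, established for $N=1$ in \cite[Lemma~4.5]{FL-1996}, and \eqref{carleman-heat} then follows by expressing $\nabla_xw$ in terms of $e^{\tau\varphi}\nabla_xz$ and absorbing the lower-order errors.

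For a general $N>0$ I would rerun the same computation, tracking where $N$ enters. Because $\varphi=\sigma^2$, the top-order-in-$\tau$ part of $[Q_s,Q_a]$ is $4\tau^3\langle\nabla^2\varphi\,\nabla\varphi,\nabla\varphi\rangle$; near the origin $\nabla^2\varphi=2\nabla\sigma\otimes\nabla\sigma+2\sigma\nabla^2\sigma\approx2\,e_3\otimes e_3$ up to an $O(N\delta)$ error and $\nabla\varphi\approx-2\delta\,e_3$, so this equals $\approx16\delta^2\tau^3>0$, of size $\delta^2$ and independent of~$N$; likewise the $\tau\|\nabla_xw\|^2$-term is driven by $\langle\nabla^2\varphi\,\xi,\xi\rangle$ on the characteristic set, where the rank-one piece $2\nabla\sigma\otimes\nabla\sigma$ — the whole point of squaring — supplies the needed positivity in the $x_3$-direction. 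The parameter $N$ shows up only through $\nabla^2\sigma=\mathrm{diag}(2N,2N,0)$, $\partial_t^2\sigma=2N$ and $\partial_t\sigma=2Nt$: the contributions proportional to $\partial_t\sigma$ are $O(N|t|)=O(Nt_1)$ and are absorbed by shrinking $t_1$, while $\nabla^2\sigma$ and $\partial_t^2\sigma$ enter only as $O(N\delta)$ corrections to lower-order coefficients, dominated by the $\delta^2$-sized leading terms once $\delta$ is small relative to~$N$. Hence the pseudo-convexity hypothesis of \cite[Lemma~4.3]{FL-1996} holds on a suitable $W(t_1,r_1)$ for $0<h\le h_1$, giving \eqref{carleman-heat}.

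The step I expect to be the main obstacle is exactly this pseudo-convexity verification, and within it the treatment of the time variable: for a parabolic operator the weight must be handled delicately in~$t$, and what rescues the argument is that $\varphi$ depends on~$t$ only through the even, strictly convex map $t\mapsto Nt^2$ (so $\partial_t\varphi$ vanishes to first order at $t=0$ and the dangerous $t$-terms are small), together with the fact that $\sigma$ is affine in~$x_3$, which kills $\partial_{x_3}^2\sigma$ and keeps $\nabla^2\varphi$ of the clean form used above. Since the case $N=1$ is already in \cite{FL-1996} and the $N$-dependence enters only through the explicitly computed lower-order terms, everything that remains is bookkeeping.
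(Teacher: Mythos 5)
Your proposal is correct and follows essentially the same route as the paper: the paper likewise treats the estimate as \cite[Lemma~4.3]{FL-1996} applied to the weight $\varphi$ of \eqref{function-phi}, with the verification of the weight hypotheses taken from \cite[Lemma~4.5]{FL-1996} for $N=1$ and extended to general $N$ by the same computation (which you sketch in more detail via the conjugation/commutator argument, choosing $\delta$ small depending on $N$ and shrinking $t_1,r_1$). The only nitpick is the harmless constant in the leading commutator term ($4\tau^3\langle\nabla^2\varphi\,\nabla\varphi,\nabla\varphi\rangle\approx 32\,\delta^2\tau^3$ rather than $16\,\delta^2\tau^3$), which does not affect the positivity or the conclusion.
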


\begin{remark}\label{rmk:car}
Inequality~\eqref{carleman-heat} is invariant under the time reversal, so that the heat operator entering its right-hand side can be replaced by the backward heat operator $\p_t+\Delta$.
\end{remark}

\subsubsection*{Elliptic operators with constant coefficients}
We now focus on Stokes-type systems satisfied by the velocity field in the Boussinesq system. Since we investigate stationary solutions, we regard~$t$ as a parameter. Let~$L$  be a first-order differential operator acting on vector functions:
\begin{equation*} 
	Lf=\sum_{j=1}^3\sum_{k=1}^n a_{jk}\p_j f_k, \quad f=(f_1,\dots,f_n),
\end{equation*}
where $a_{jk}$ are given real numbers. The following result is a consequence of~\cite[Theorem~3.1]{FL-1996} applied to the function~$\varphi_t$ defined in \eqref{function-phi} with a sufficiently small~$t$. 

\begin{proposition} \label{p-carleman-elliptic}
For any $N>0$ and sufficiently small $\delta>0$, there are positive numbers $t_2$, $r_2$, $C_2$, and~$h_2$ such that for any functions $y\in H_0^1(B_{r_2})$ and $f\in L^2(\R^3)^n$ satisfying 
	\begin{equation*} 
	 \supp f\subset B_{r_2},\quad 
	 \Delta y-Lf\in L^2(B_{r_2}),\quad 
	\supp(\Delta y-Lf)\subset B_{r_2}, 
	\end{equation*}
the following inequality holds for $0<h\le h_2$ and $|t|\le t_2:$
	\begin{equation} \label{carleman-laplace}
	\bigl\|e^{\varphi_t/h}y\bigr\|^2+h^2\bigl\|e^{\varphi_t/h}\nabla_xy\bigr\|^2
	\le C_2\Bigl(h\bigl\|e^{\varphi_t/h}f\bigr\|^2+h^3\bigl\|e^{\varphi_t/h}(\Delta y-Lf)\bigr\|^2\Bigr), 
	\end{equation}
	where $\|\cdot\|$ stands for the $L^2$-norm taken over~$\R^3$. 
\end{proposition}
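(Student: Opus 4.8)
The plan is to deduce Proposition~\ref{p-carleman-elliptic} from the general Carleman estimate for the Laplacian due to Fabre and Lebeau, namely \cite[Theorem~3.1]{FL-1996}, which is tailored precisely to Stokes-type systems: it treats a right-hand side split into a first-order part $Lf$ (which, upon conjugation by $e^{\varphi_t/h}$, costs only one power of~$h$, hence the weight $h\,\|e^{\varphi_t/h}f\|^2$ on the right of \eqref{carleman-laplace}) and an $L^2$ part $\Delta y-Lf$ (which enters with the full elliptic weight $h^3\,\|e^{\varphi_t/h}(\Delta y-Lf)\|^2$). Granting that theorem, the entire content of the proof is to check that $\varphi_t$, viewed as a function of $x\in\R^3$ with $t\in[-t_2,t_2]$ a parameter, is an admissible Carleman weight for~$\Delta$ in the sense of~\cite{FL-1996}, uniformly for $|t|$ small.

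First I would reduce to a small ball. Since $y$, $f$, and $\Delta y-Lf$ are all supported in $B_{r_2}$, which we take with $r_2<1$, only the restriction of $\varphi_t$ to $B_{r_2}$ matters, and there $\chi\equiv1$, so that \eqref{function-phi} gives $\varphi_t(x)=\psi_t(x)^2$ with $\psi_t(x)=x_3+N(x_1^2+x_2^2+t^2)-\delta$. In particular $\nabla_x\psi_t=(2Nx_1,2Nx_2,1)$ does not vanish on $B_{r_2}$, and $\nabla_x\psi_t(0)=(0,0,1)$; the cutoff $\chi$ plays no role in the argument, it only serves to make $\varphi$ globally defined and bounded.

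Next I would verify the strong pseudoconvexity of the weight $\psi_t^2$ with respect to the Laplacian on $B_{r_2}$. Writing H\"ormander's pointwise condition for the conjugated operator $e^{\varphi_t/h}\Delta e^{-\varphi_t/h}$ at points of its characteristic set, the condition reduces to a pointwise algebraic inequality in~$x$ involving $\psi_t$, $\nabla_x\psi_t$ and $\nabla_x^2\psi_t$; the key mechanism is that squaring a function with non-vanishing gradient produces such an inequality, because the Hessian of $\psi_t^2$ in the direction $\nabla_x\psi_t$ equals $2|\nabla_x\psi_t|^4+2\psi_t\,(\nabla_x^2\psi_t)(\nabla_x\psi_t,\nabla_x\psi_t)$, which is positive near the level set $\{\psi_t=0\}$ and can be made positive on all of $B_{r_2}$ by first fixing $N$ large and then $r_2$, $\delta$ small. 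For $N=1$ and $t=0$ this is exactly the computation carried out in \cite[Lemma~4.5]{FL-1996}; for arbitrary $N>0$ the same computation applies verbatim (only the numerology of the constants changes), and the dependence on the parameter~$t$ is smooth, the required strict inequalities being stable under small perturbations. Since $\overline{B_{r_2}}$ is compact, after shrinking $t_2$ and $r_2$ we obtain the pseudoconvexity uniformly for $|t|\le t_2$, which is what is needed to get constants $C_2$, $h_2$ independent of~$t$ in \eqref{carleman-laplace}.

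With the weight verified, I would conclude as follows. Extend $y$, $f$ and $g:=\Delta y-Lf$ by zero outside $B_{r_2}$; this is legitimate because $y\in H_0^1(B_{r_2})$ and $f$, $g$ are supported in $B_{r_2}$ with the stated regularity, so $\Delta y=Lf+g$ holds in $\R^3$. Applying \cite[Theorem~3.1]{FL-1996} with the weight $\varphi_t$ then yields \eqref{carleman-laplace} directly, the first-order source $Lf$ producing the term $h\,\|e^{\varphi_t/h}f\|^2$ and the $L^2$ source $g$ the term $h^3\,\|e^{\varphi_t/h}g\|^2$; a standard density argument passes from the smooth compactly supported functions for which the estimate is first proved to the class in the statement. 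The main obstacle is the pseudoconvexity verification above: it is there that the precise structure of $\varphi_t$ as the square of a linear function plus a small quadratic correction is essential, and one must handle with care the degeneracy of $\nabla_x\varphi_t$ along $\{\psi_t=0\}$, which is the reason the construction insists on $\delta$ small and on the quadratic term $N(x_1^2+x_2^2+t^2)$ bending the level sets in the favorable direction.
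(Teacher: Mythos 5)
Your overall route is the paper's route: reduce to \cite[Theorem~3.1]{FL-1996} and verify, uniformly in $|t|\le t_2$, H\"ormander's bracket condition for the weight $\varphi_t$. But the verification itself --- which is the entire content of the proof --- is flawed as you describe it. Writing $a_t(x,\xi)=|\xi|^2-|\nabla\varphi_t|^2$, $b_t(x,\xi)=2\langle\xi,\nabla\varphi_t\rangle$, the quantity that must admit a \emph{uniform} positive lower bound on $\{a_t=b_t=0\}$ is $\{a_t,b_t\}(x,\xi)=4\sum_{j,k}\p_j\p_k\varphi_t\,(\xi_j\xi_k+\p_j\varphi_t\,\p_k\varphi_t)$. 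With $\varphi_t=\psi_t^2$ this equals, on the characteristic set (where $\xi\perp\nabla\psi_t$ and $|\xi|=2|\psi_t||\nabla\psi_t|$),
$16\psi_t^2\bigl(2|\nabla\psi_t|^4+2\psi_t D^2\psi_t(\nabla\psi_t,\nabla\psi_t)\bigr)+8\psi_t D^2\psi_t(\xi,\xi)$.
Your ``key mechanism'' keeps only the factor in parentheses: it drops the prefactor $\psi_t^2$ and the tangential term $8\psi_t D^2\psi_t(\xi,\xi)$, which has no sign and is of exactly the same order as the error you do retain. Consequently your claim that positivity holds ``near the level set $\{\psi_t=0\}$'' is false for the actual condition: on that set $\nabla\varphi_t=0$, the constraints force $\xi=0$, and $\{a_t,b_t\}$ vanishes, so the uniform bound required by Fabre--Lebeau degenerates as one approaches $\{\psi_t=0\}$. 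The estimate is only available on a region kept at distance of order $\delta$ from that set, i.e.\ $r_2$ and $t_2$ must be taken small \emph{compared with $\delta$}; this relation is absent from your argument (``on all of $B_{r_2}$'' with $r_2$ unrelated to $\delta$).

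This localisation is precisely what the paper's proof does: it checks the bracket only for $|t|+|x|\le\delta/2$, where $\psi_t\approx-\delta$ is bounded away from zero, using $\p_j\varphi_t=2(x_3-\delta)\delta_{j3}+O(\delta^2)$ and $\p_j\p_k\varphi_t=2\delta_{j3}\delta_{k3}+O(\delta)$; on the characteristic set this gives $|\xi|\sim\delta$, $\xi_3=O(\delta^2)$, hence $\{a_t,b_t\}=32\delta^2+O(\delta^3)\ge 16\delta^2$, a bound proportional to $\delta^2$ and valid for any fixed $N$ once $\delta$ is small. Two further slips: \cite[Lemma~4.5]{FL-1996} verifies the hypotheses for the \emph{parabolic} estimate (the paper invokes it only for Proposition~\ref{p-carleman-heat}), so you cannot simply defer the elliptic weight check to it --- the bracket computation has to be carried out, as above; and your quantifier order ``first fix $N$ large, then $\delta,r_2$ small'' does not match the statement, which demands the estimate for every $N>0$ (with $\delta$ small depending on $N$); large $N$ is needed only later, in the unique-continuation argument, not here. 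Once the bracket computation is done correctly on $\{|t|+|x|\le\delta/2\}$, the remaining steps you propose (extension by zero, application of \cite[Theorem~3.1]{FL-1996}, density) are fine and coincide with the paper.
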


\begin{proof}
Let us set
	$$
	a_t(x,\xi)=|\xi|^2-|\nabla\varphi_t(x)|^2, \quad 
	b_t(x,\xi)=2\langle\xi,\nabla\varphi_t(x)\rangle.
	$$
Recall that the Poisson bracket of~$a_t$ and~$b_t$ is defined by
$$
\{a_t,b_t\}(x,\xi)=\sum_{j=1}^3
\biggl(\frac{\p a_t}{\p\xi_j}\frac{\p b_t}{\p x_j}-\frac{\p a_t}{\p x_j}\frac{\p b_t}{\p\xi_j}\biggr). 
$$
By~\cite[Theorem~3.1]{FL-1996}, it suffices to prove that, for any $(t,x,\xi)\in\R^7$ such that $|t|+|x|\ll1$, the relations $a_t(x,\xi)=0$ and $b_t(x,\xi)=0$ imply the inequality
\begin{equation} \label{Poisson-lowerbound}
	\{a_t,b_t\}(x,\xi)\ge c,
\end{equation}
where $c>0$ does not depend on $(t,x,\xi)$. To this end, assume that $\delta > 0$  in the definition of $\varphi_t$ is small and $|x|+|t|\le \delta/2$.
For such $(x, t)$ we have $\varphi_t \approx \delta^2$ which means $C^{-1} \delta^2 \leq \varphi_t(x, t) \leq C\delta^2$ for some universal constant $C>1$. 
 It is straightforward to check that 
\begin{equation} \label{PB}
	\{a_t,b_t\}(x,\xi)=4\sum_{j,k=1}^3\p_j\p_k\varphi_t\bigl(\xi_j\xi_k+\p_j\varphi_t \p_k\varphi_t \bigr).
\end{equation}
Next, note that
$$
\p_j\varphi_t(x)=2(x_3-\delta)\delta_{j3} +  O(\delta^2), \quad
\p_j\p_k\varphi_t(x)=2\delta_{j3}\delta_{k3}+O(\delta),
$$
where $\delta_{jk}$ is Kronecker's symbol. It follows that $|\xi|=|\nabla \varphi_t(x)| \approx \delta$ whenever $a_t(x,\xi)=0$.  In addition,  if $b_t(x,\xi)=0$, then 
$$
\xi_3\p_3\varphi_t= - \xi_1\p_1\varphi_t - \xi_2\p_2\varphi_t=  O(\delta^3). 
$$ 
Since $\p_3\varphi_t \approx \delta$,  we conclude that $\xi_3=O(\delta^2)$. After substitution  into~\eqref{PB} it follows that 
$$
\{a_t,b_t\}(x,\xi)= 32 \delta^2+O(\delta^3).
$$
Hence, for $\delta\ll1$ the required inequality~\eqref{Poisson-lowerbound} is satisfied with $c=16\delta^2$. 
\end{proof}

\addcontentsline{toc}{section}{References}
\newcommand{\etalchar}[1]{$^{#1}$}
\def\cprime{$'$} \def\cprime{$'$}
  \def\polhk#1{\setbox0=\hbox{#1}{\ooalign{\hidewidth
  \lower1.5ex\hbox{`}\hidewidth\crcr\unhbox0}}}
  \def\polhk#1{\setbox0=\hbox{#1}{\ooalign{\hidewidth
  \lower1.5ex\hbox{`}\hidewidth\crcr\unhbox0}}}
  \def\polhk#1{\setbox0=\hbox{#1}{\ooalign{\hidewidth
  \lower1.5ex\hbox{`}\hidewidth\crcr\unhbox0}}} \def\cprime{$'$}
  \def\polhk#1{\setbox0=\hbox{#1}{\ooalign{\hidewidth
  \lower1.5ex\hbox{`}\hidewidth\crcr\unhbox0}}} \def\cprime{$'$}
  \def\cprime{$'$} \def\cprime{$'$} \def\cprime{$'$}
\providecommand{\bysame}{\leavevmode\hbox to3em{\hrulefill}\thinspace}
\providecommand{\MR}{\relax\ifhmode\unskip\space\fi MR }
\providecommand{\MRhref}[2]{%
  \href{http://www.ams.org/mathscinet-getitem?mr=#1}{#2}
}
\providecommand{\href}[2]{#2}

\end{document}